\documentclass{amsart}
\usepackage[utf8]{inputenc}
\usepackage{amssymb, tipa, thmtools}
\usepackage{graphicx}
\usepackage{xcolor}
\usepackage{tikz}
\usepackage{csquotes}
\usepackage[english]{babel}
\usepackage[fixlanguage, datename]{babelbib}
\usepackage{hyperref}

\newtheorem{theorem}{Theorem}[section]

\newtheorem{lemma}[theorem]{Lemma}
\newtheorem{remark}[theorem]{Remark}
\newtheorem{corollary}[theorem]{Corollary}
\newtheorem{definition}[theorem]{Definition}
\newtheorem{example}[theorem]{Example}

\newtheorem{fact}[theorem]{Fact}
\newtheorem{proposition}[theorem]{Proposition}

\usepackage{mathtools}

\def\R{\mathbb{R}}

\def\Z{\mathbb{Z}}

\def\lang{\mathcal{L}}
\def\graph{\mathcal{G}}

\newcommand{\set}[1]{\{#1 \}}
\newcommand{\inv}{^{-1}}

\newcommand{\model}{\mathcal{M}}
\newcommand{\nodel}{\mathcal{N}}

\DeclareMathOperator{\acl}{acl}
\DeclareMathOperator{\Aut}{Aut}

\title{Transitive Extensions of Automorphism Groups of Generic Structures}
\author{Felipe Estrada}
\thanks{I would like to thank Professor Dugald Macpherson for proposing the topic of this paper and for offering extensive and constructive feedback at every stage, which significantly strengthened this manuscript}
\thanks{The author was supported by grant INV-2025-216-3443 from the Faculty of Sciences of the Universidad de los Andes while carrying out this research.}

\begin{document}

\maketitle

\begin{abstract}
    This work addresses the existence of transitive extensions of certain infinite permutation groups which arise as the automorphism groups of model-theoretic structures which are generic in the Fraïssé sense. The study of transitive extensions has hitherto largely concerned itself with finite permutation groups. Moving beyond the finite realm, we develop combinatorial tools to prove that transitive extensions exist for edge-colored $k$-hypergraphs only when the number of colors is a power of two and that transitive extensions exist for $k$-hypertournaments (in the Cherlin sense) only when $k$ is even, among other results.
\end{abstract}

\section{Introduction}

Let $\model$ be a first-order structure with universe $M$. A \emph{one-point extension} of $\model$ is a first-order structure $\model'$ on $M\cup\set{0}$ where $0$ is a new element such that $(\Aut(\model')_0, M) = (\Aut(\model), M)$. Here $\Aut(\model')_0$ is the stabilizer of 0 in $\Aut(\model')$. By a \emph{transitive extension}, we mean a 1-point extension whose automorphism group is 1-transitive. This is not to be confused with an \emph{expansion} of $\model$, which consists of adding new symbols to the language and assigning an interpretation of them in $M$.

Transitive extensions have been studied extensively for finite permutation groups (see e.g. \cite{biggs1979permutation}), but there has been little work on transitive extensions of infinite structures.  The existence of transitive extensions of the Rado graph, the random tournament, and homogeneous C-sets were already known to some experts \cite{CameronPersonalComm}, but their construction was never formally written down. In this article we will construct transitive extensions of classes of structures which include these examples and more.

A closely related concept to transitive extensions is that of reducts. Given a structure $\model$, let $\nodel$ be another first-order structure with the same universe $M$ (but generally a different language). There are two senses in which $\nodel$ may be a reduct of $\model$. We shall call $\nodel$ a \emph{general reduct} if $\Aut(\model) \leq \Aut(\nodel)$, and a \emph{definable reduct} if there exist a set $\lang'$ of 0-definable relations and functions on $\model$ such that $\model|_{\lang'}$ is isomorphic to $\nodel$. Clearly, any definable reduct is a general reduct. By the Ryll-Nardzewski Theorem the converse holds if $\model$ is an $\omega$-categorical structure, which covers most examples in this work. We shall mostly explore general reducts, and refer to them simply as reducts. 

The exact relationship between transitive extensions and reducts will be elaborated upon in Lemma \ref{reducts lemma}, but suffice to say that with some mild conditions, a transitive extension is isomorphic to a reduct of the original structure. Since many structures have had their reducts classified, for these structures there is a nice shortlist of possibilities for what a transitive extension of their automorphism group may be. Certain structures are known to have no proper non-trivial reducts, and that fact may be used to quickly conclude no transitive extensions exist.

The general goal of this work is to examine a series of generic structures and characterize the conditions under which they have transitive extensions. Following this introduction, Section 2 will begin by establishing a number of preliminary general results about transitive extensions which apply to all structures in subsequent sections, as well as formally defining our notion of genericity.

Section 3 defines edge-colored $k$-hypergraphs and develops the first main result:

\begin{restatable}{maintheorem}{maintheoremone}\label{main graph theorem}
    The generic edge-colored $k$-hypergraph with $n$ colors has a transitive extension if and only if $n$ is a power of two.
\end{restatable}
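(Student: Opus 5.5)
The plan has three parts. First, use the reducts lemma (Lemma \ref{reducts lemma}) to turn a transitive extension into a combinatorial "switching" structure on the colors. Then build that structure when $n = 2^m$. Finally, prove by counting that it cannot exist unless $n$ is a power of two.

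\textbf{Setup.} Let $\model$ be the generic edge-colored $k$-hypergraph with $n$ colors, and suppose $\model'$ is a transitive extension on $M\cup\set{0}$. By Lemma \ref{reducts lemma} and $\omega$-categoricity, there is an automorphism $g$ of $\model'$ moving $0$ to some $a\in M$. This $g$ yields an isomorphism from $\model$ onto the structure induced on $M\cup\set{0}\setminus\set{a}$, which is a reduct of $\model$ (after identifying $0$ with $a$).

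The natural form of such a reduct is a switching operation. One fixes a group $G$ of permutations of the color set $C$ ($|C|=n$) and, for each vertex $x$ and each "label", permutes the colors of the edges through $x$. For the construction I take $C = (\Z/2\Z)^m$. Given $a\in M$, I define a new coloring on $M$: an edge $e$ containing $a$ becomes a hyperedge $(e\setminus\set{a})\cup\set{0}$ with its color shifted by a quantity read off from the colors of $(k-1)$-subsets. More precisely, I encode $\model'$ on $M\cup\set{0}$ as the \emph{sum} (over $\Z/2\Z$-vector space) of colors. The cleanest invariant object is a $(k+1)$-ary "boundary" function, $\delta c(S)=\sum_{e\subset S,\,|e|=k} c(e)$ for $|S|=k+1$. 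This is invariant under switching, and $\Aut$ of the structure with relation $\delta c$ should be transitive on $M\cup\set{0}$ once $\model'$ is defined by placing $0$ symmetrically. The structure $\model'$ itself is taken to be a $(k+1)$-hypergraph colored by $C$ that is generic subject to the cocycle condition $\delta\delta=0$. I then verify two things. First, its point stabilizer acts on $M$ as $\Aut(\model)$, using the correspondence $c(e)=\delta c(e\cup\set{0})$. Second, it is transitive, by a back-and-forth / homogeneity argument: the link of any point is again a generic $n$-colored $k$-hypergraph.

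\textbf{Necessity.} Suppose a transitive extension exists. From genericity and transitivity, the automorphism moving $0$ to $a$ acts on colors of edges through $a$ via a map depending on the configuration. Using the extension property, I would show this induces a regular action of a group $H$ on the color set $C$. By considering three points (two switchings composed, compared to a single switching) I would show every element of $H$ has order $2$. Hence $H$ is an elementary abelian $2$-group acting regularly, so $n=|H|=2^m$.

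\textbf{Main obstacle.} The necessity step is the hardest. It requires extracting from an abstract reduct a well-defined group action on colors that does not depend on the choice of vertices, and showing it is regular and of exponent two. To make this well-defined I would first try to use the classification of the relevant reducts via Lemma \ref{reducts lemma}. Failing that, I would try a direct finite counting argument on a small substructure of $k+1$ points, counting color orbits. The goal is to show that $n$ must be divisible by its odd part only trivially, i.e.\ that $n$ has no odd prime factor.
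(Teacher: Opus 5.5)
Your sufficiency direction is essentially the paper's (Corollary \ref{if direction of graph theorem}). A $(\Z/2\Z)^m$-valued coloring $c'$ of the $(k+1)$-sets of $M\cup\set{0}$ with $\delta c'=0$ is exactly a merge of $m$ even $(k+1)$-hypergraphs, and your link argument for transitivity can be made to work. The formula needs fixing, though: set $c'(e\cup\set{0})=c(e)$ for $k$-sets $e\subseteq M$ and $c'(S)=\delta c(S)$ for $(k+1)$-sets $S\subseteq M$. The defining condition is $\delta c'=0$ on $(k+2)$-sets, and this holds because $\delta\delta=0$ is an identity.

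The necessity direction, however, has a genuine gap. You state the conclusion you want, a regular exponent-$2$ group $H$ on the colors, but you never produce $H$.
\begin{itemize}
\item The mechanism you point to does not exist as stated. Automorphisms of $\model'$ preserve each $R_i'$, so an automorphism moving $0$ to $a$ does not act on colors at all.
\item Your first fallback is not viable. No classification of reducts of the generic $n$-colored $k$-hypergraph is available in this generality, and the paper uses none.
\item Your second fallback cannot work either. Counting orbits on $k+1$ points cannot detect the obstruction, which lives on $(k+2)$-subsets of $M$.
\end{itemize}
What is missing is the \emph{local rule} and the combinatorics it forces, which the paper obtains in three steps.

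(i) In canonical form, $\model'$ is an $n$-colored $(k+1)$-hypergraph (Lemma \ref{Conditions for generic Graph extensions}). Permutations fixing $0$ are local isomorphisms; this is part of the type of $0$, so it holds at every point; and point stabilizers generate $S_{k+1}$ when $k\geq 2$. Without this step, ``the color of a $(k+1)$-set'' is not even defined.

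(ii) The $\Aut(\model)$-orbits of $(k+1)$-sets are classified by the multiset of colors of their $k$-subsets. Hence the $\model'$-color of such a set is a function $f$ of that multiset. Equivalently, in a $(k+2)$-set containing $0$, the color of the $(k+1)$-subset avoiding $0$ is determined by the colors of the other $k+1$. This is first-order in the type of $0$, so by transitivity it holds at every vertex of every $(k+2)$-set of $M'$.

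(iii) Evaluate this rule on chosen $(k+2)$-point configurations inside $M$: the two $4$-vertex diagrams, padded with a fixed color $i_0$ when $k>2$. This yields a palette $A\subseteq M^4_n$ satisfying three properties:
\begin{itemize}
\item every $3$-multiset has a unique completion in $A$;
\item $\set{i,i,j,j}\in A$;
\item the exchange property (3).
\end{itemize}

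Once you have a palette, your intended endpoint is reachable, and more directly than by the paper's halving induction (a fixed-point-free involution, then blending $n$ colors down to $n/2$). Fix $i_0$ and let $j*k$ be the unique $\ell$ with $\set{i_0,j,k,\ell}\in A$.
\begin{itemize}
\item Properties (1) and (2) give $i_0*k=k$ and $k*k=i_0$.
\item Commutativity is automatic, since multisets are unordered.
\item For associativity, let $x=a*b$, $y=x*c$, $z=b*c$ and $w=a*z$. Apply (3) to the pair $\set{i_0,x,a,b},\set{i_0,x,c,y}$ and to the pair $\set{i_0,z,b,c},\set{i_0,z,a,w}$. This gives $\set{a,b,c,y}\in A$ and $\set{a,b,c,w}\in A$, so $y=w$ by (1).
\end{itemize}
Hence the colors form a group of exponent $2$, which is necessarily elementary abelian, so $n=2^m$. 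Its translations are exactly the regular exponent-$2$ action you were after. But all of this rests on (i)--(iii), none of which appear in your proposal, so as written the ``only if'' direction is unproved.
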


The section additionally provides an explicit construction of these transitive extensions when they exist: they are $(k+1)$-hypergraphs which satisfy a parity condition. For $k=2$ this is the only case of a nontrival (i.e. not interdefinable with a pure set) ultrahomogeneous graph with a transitive extension.

Section 4 is split into two parts, each of which deals with a distinct generalization of the random tournament to higher dimensions. Section 4.1 will deal with Cherlin's notion of a $k$-hypertournament, which we rename as a $k$-orientation to avoid ambiguity. We prove our second main result:

\begin{restatable}{maintheorem}{maintheoremtwo}\label{main orient theorem}
    The generic $k$-orientation has a transitive extension if and only if $k$ is even.
\end{restatable}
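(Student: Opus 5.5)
The plan is to imitate the construction of two-graphs from graphs, which is the mechanism behind the known transitive extension of the Rado graph. In that mechanism a $(k+1)$-ary ``coboundary'' relation is defined from the $k$-ary structure, and the new point $0$ is joined in a way that keeps everything switching-invariant. Throughout, I treat a $k$-orientation $f$ as a function assigning to each ordered $k$-tuple of distinct elements a sign $f(a_1,\dots,a_k)\in\set{\pm 1}$ with $f(a_{\sigma(1)},\dots,a_{\sigma(k)})=\operatorname{sgn}(\sigma)f(a_1,\dots,a_k)$.

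\textbf{Existence for even $k$.} Given distinct $a_0,\dots,a_k$, I would set
\[
(\delta f)(a_0,\dots,a_k)=\prod_{i=0}^{k}\bigl((-1)^i f(a_0,\dots,\widehat{a_i},\dots,a_k)\bigr).
\]
The first check is how this behaves under a transposition of adjacent entries $a_j,a_{j+1}$. The two faces omitting $a_j$ or $a_{j+1}$ exchange, and their factors $(-1)^j$, $(-1)^{j+1}$ swap, which is harmless since we multiply. Each of the remaining $k-1$ faces contains both points and so acquires a sign $-1$. Hence $\delta f$ changes by $(-1)^{k-1}$, so for $k$ even it is again alternating and defines a $(k+1)$-orientation.

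On $M\cup\set{0}$ I then define the $(k+1)$-orientation $R$ by $R\restriction M=\delta f$ and $R(0,a_1,\dots,a_k)=f(a_1,\dots,a_k)$. Equivalently, $R=\delta\tilde f$, where $\tilde f$ extends $f$ by declaring every $k$-set through $0$ positively oriented in the order with $0$ first; this requires choosing that convention compatibly with the signs above.

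Next I must show that $\Aut(R)_0$ acting on $M$ is exactly $\Aut(f)$. This holds because $f$ is recovered from $R$ as $f(\bar a)=R(0,\bar a)$, and $\delta f$ is $f$-definable. For transitivity, the key tool is \emph{switching}. For $a\in M$, the structure $R$ viewed from $a$ is $\delta g$, where $g(\bar b)=R(a,\bar b)$, because $\delta R$ is trivial (the analogue of $\delta\delta=1$). One must then verify that $(M\cup\set{0})\setminus\set{a}$ with $g$ is again a generic $k$-orientation. This follows by checking the extension property via the genericity of $f$ together with the Fraïssé characterisation from Section 2. Back-and-forth then gives an automorphism of $R$ sending $0$ to $a$.

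\textbf{Nonexistence for odd $k$.} Here I would use Lemma~\ref{reducts lemma}. Any transitive extension is (up to isomorphism) a reduct-type structure on $M\cup\set{0}$, with $\Aut(\model')_0=\Aut(\model)$ and some $g\in\Aut(\model')$ moving $0$ to a point $a\in M$. The $\Aut(\model')_{0,a}$-invariant structure on $M\setminus\set{a}$ is then interdefinable both with $f$ restricted to $M\setminus\set{a}$ with $a$ named, and, via $g$, with a generic $k$-orientation. Comparing orbit counts on $k$-sets and $(k+1)$-sets forces the $2$-transitive group to preserve a $\set{\pm1}$-valued invariant on $(k+1)$-sets that is determined by face orientations. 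By a counting argument on a small configuration (a $(k+2)$-set), this invariant must be of the form $\delta f$ up to a constant. The parity computation above then shows that $\delta f$ is symmetric rather than alternating for $k$ odd. In that case it cannot encode the information needed to recover $f$ from the point $0$, which contradicts $\Aut(\model')_0=\Aut(\model)$.

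\textbf{Main obstacle.} I expect the odd case to be the main difficulty, specifically the step ruling out \emph{every} transitive extension rather than just the natural candidate. The first thing I would try is to classify the possible $\Aut(\model')$-orbits on $(k+1)$-tuples using the $\omega$-categoricity and genericity of $f$. Alternatively, I would reduce to a finite sub-configuration on $k+2$ points where a sign count produces a contradiction.
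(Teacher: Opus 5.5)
\textbf{Even $k$.} Your construction is wrong as written when $k\equiv 2\pmod 4$, including the basic case $k=2$. The reason is that $\delta\delta$ is not identically $1$ in your multiplicative convention. In $\delta\delta f(a_0,\dots,a_{k+1})$ each face value $f(\bar a_{-i,-j})$ occurs twice and cancels. The sign factors do not cancel: they contribute $(-1)^{i+j-1}$ for each pair $i<j$, together with $\prod_i(-1)^i$. So for even $k$ one gets $\delta\delta f\equiv(-1)^{\binom{k+2}{2}}$.

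For your $R$, write $\bar a_{-m}$ for $\bar a$ with $a_m$ omitted. Every $(k+2)$-set through $0$ then has
\[
\delta R(0,\bar a)=\delta f(\bar a)\cdot\prod_{m=0}^{k}(-1)^{m+1}f(\bar a_{-m})=(-1)^{k+1}\delta f(\bar a)^2=-1 .
\]
For $k=2$ this differs from $\delta\delta f=+1$. Concretely, a cyclic triangle $a\to b\to c\to a$ gives $R(a,b,c)=R(0,a,b)=R(0,b,c)=R(0,c,a)=1$, hence $\delta R(0,a,b,c)=-1$, while $\delta R\equiv 1$ on $4$-subsets of $M$. So the property ``every $(k+2)$-set through $x$ has $\delta R=-1$'' holds of $0$ and of no $a\in M$, and $R$ is not transitive.

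Consequently your claim that $\delta R$ is trivial is false. Your two descriptions of $R$ are also not equivalent: with $0$ dominating, $\delta\tilde f(0,a,b)=-f(a,b)$.

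The repair is to require only that $\delta R$ be \emph{constant}, which is all your switching step uses. Put $R\restriction M=(-1)^{k/2}\delta f$. Then $\delta R\equiv(-1)^{k/2+1}$ on all $(k+2)$-sets, $R$ off any $a$ equals $(-1)^{k/2}\delta g_a$, and the rest of your argument goes through.

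Do not instead import the paper's normalization. ``Even'' means $\delta R\equiv 1$, and the same bookkeeping shows it is consistent only for $k\equiv 2\pmod 4$. For $k=4$, its forced values give every $6$-subset of $M$ two agreement classes of odd size, so your original sign is the right one when $k\equiv 0\pmod 4$. Once the sign is fixed, your coboundary/switching route is cleaner than the paper's base configuration plus flipping induction, precisely because it makes this constant visible.

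\textbf{Odd $k$.} Here you do not have an argument. The claim that the invariant on $(k+1)$-sets ``must be of the form $\delta f$ up to a constant'' is asserted without proof. The claim that ``$\delta f$ is symmetric, so cannot encode $f$'' is not a contradiction. The missing idea is a local obstruction, which goes as follows.
\begin{enumerate}
\item As in Lemma~\ref{Conditions for generic Graph extensions}, with $A_k$ in place of $S_k$, a canonical-form extension is a $(k+1)$-orientation $T'$ with $T'(\bar x,0)\leftrightarrow f(\bar x)$.
\item Hence $T'\restriction M$ is an $\Aut(f)$-invariant $(k+1)$-orientation. By homogeneity of $f$, no $(k+1)$-point substructure of $f$ can have an odd automorphism.
\item For odd $k$ such a substructure exists. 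Take $f$ on $k+1$ points with agreement classes of equal size, and follow a Hamiltonian cycle of the complete bipartite disagreement graph. This gives a $(k+1)$-cycle $\sigma$.
\item On each $k$-subset, $\sigma$ differs from a match-map isomorphism by a $k$-cycle, which is even since $k$ is odd. So $\sigma$ is an automorphism, yet as a cycle of even length it is an odd permutation.
\end{enumerate}
This is the paper's proof, and nothing in your sketch substitutes for it.
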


Similarly to the first main theorem, we provide explicit constructions when they exist, and realize them as $(k+1)$-orientations with a parity condition, and conclude the section with a comment on how these results may be leveraged to obtain transitive extensions for random digraphs. Section 4.2 deals with the more standard definition of $k$-hypertournament. For these, we leverage the results of Section 3 to prove transitive extensions do not exist except in the $k = 2$ case, which is simply the random tournament. In section 5 we develop another purely negative result by proving that no non-trivial regular equivalence relation has a transitive extension. 

Lastly, we conclude this work in section 6, in which we slightly extend the fact, implicit in Adeleke and Neumann's work \cite{adeleke1998relations}, that certain $C$-sets, which are ternary relational structures which capture the structure of rooted trees, have transitive extensions which are $D$-sets, related structures which capture the structure of unrooted trees. The main focus for us is not $C$-sets themselves but rather three natural expansions of their theory: ordered, internally-colored, and leveled $C$-sets. We cap off this work by constructing transitive extensions for the first two expansions and prove their nonexistence for the third.

\section{General Results}

We begin with this elementary result which is crucial to the study of transitive extensions.

\begin{fact}\label{transitivity degree jump fact}
    If $\model$ is $k$-transitive and has a transitive extension $\mathcal{M}'$, then $\mathcal{M}'$ is $(k+1)$-transitive.
\end{fact}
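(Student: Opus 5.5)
We argue directly from the definitions, using the standard fact that a transitive group whose point stabilizer acts $k$-transitively on the remaining points is $(k+1)$-transitive. Write $G = \Aut(\model')$ acting on $M' = M \cup \set{0}$. By definition of a one-point extension, $(G_0, M) = (\Aut(\model), M)$; since $\model$ is $k$-transitive, $G_0$ acts $k$-transitively on $M = M' \setminus \set{0}$. By definition of a transitive extension, $G$ is transitive on $M'$.

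Let $(a_0,\dots,a_k)$ and $(b_0,\dots,b_k)$ be two $(k+1)$-tuples of distinct elements of $M'$. The plan is to send each tuple to one whose first coordinate is $0$ and then compare them using $G_0$.
\begin{enumerate}
\item By transitivity of $G$, pick $g,h \in G$ with $g a_0 = 0$ and $h b_0 = 0$.
\item Since $g$ and $h$ are injective, $(g a_1,\dots,g a_k)$ and $(h b_1,\dots,h b_k)$ are $k$-tuples of distinct elements of $M' \setminus \set{0} = M$.
\item By $k$-transitivity of $G_0$ on $M$, there is $f \in G_0$ with $f g a_i = h b_i$ for $1 \le i \le k$. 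Since $f$ fixes $0$, we also have $f g a_0 = 0 = h b_0$.
\item Hence $h\inv f g \in G$ maps $(a_0,\dots,a_k)$ to $(b_0,\dots,b_k)$.
\end{enumerate}
This proves $(k+1)$-transitivity.

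There is no real obstacle. The only care needed is bookkeeping: we must check that the images of $a_1,\dots,a_k$ avoid $0$, which follows from injectivity together with $a_i \neq a_0$. We must also interpret $k$-transitivity of $\model$ as $k$-transitivity of $\Aut(\model)$ on $M$, which is transferred to $G_0$ through the identification $(G_0, M) = (\Aut(\model), M)$ as permutation groups.
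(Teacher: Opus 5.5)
Your proof is correct and is the standard argument the paper relies on. The paper does not prove this fact itself: it cites Biggs--White and remarks that their finite proof carries over unchanged. That proof is yours: move both tuples so their first entries become $0$, then use the $k$-transitivity of the stabilizer $G_0$ on $M$.
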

A proof of this may be found in \cite{biggs1979permutation}. It is stated for finite permutation groups only but the proof applies equally in the infinite case.

We shall now elaborate on the precise relationship between reducts and transitive extensions. We call a reduct of a structure proper if it is not isomorphic to the original structure, and nontrivial if it is not isomorphic to a pure set.

\begin{example}
    Let $\model$ be an $\lang$-structure with universe $M$. Define its trivial extension $\model_0$ as a $(\lang \cup \set{0})$-structure with universe $M \cup \set{0}$, with $0$ a new constant symbol interpreted as itself, with language symbols in $\lang$ interpreted such that relations and functions on $\model_0$ coincide with $\model$ on $M$, relations are automatically false on tuples involving $0$, and functions on tuples involving $0$ always yield $0$. 
\end{example}

It is simple to check that the trivial extension is indeed a one-point extension, albeit clearly not a transitive one.

\begin{remark}
    Let $\model'$ be any one-point extension of $\model$. The Stabilizer $\Aut(\model')_0$ acts on $M$ identically to the automorphism group of the trivial extension of $\model$.
\end{remark}

That is to say, any extension, in particular a transitive extension, is always a (general) reduct of the trivial extension. Since the trivial extension has an automorphism group that is essentially the same as the original structure's, in an informal sense this tells us any transitive extension is a sort of reduct of the original. This notion can be made formal with a few additional assumptions:

\begin{lemma}\label{reducts lemma}
    Suppose $\mathcal{M}$ has a transitive one-point extension $\mathcal{M}'$ and there is an $a_0 \in M$ and an isomorphism $\phi$ mapping $M$ to $M \setminus \set{a_0}$ as a substructure of $\model$. Then $\mathcal{M}'$ is isomorphic to a reduct of $\mathcal{M}$.
\end{lemma}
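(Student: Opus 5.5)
Since $\Aut(\model')$ is transitive on $M \cup \set{0}$, we can move the new point $0$ onto the distinguished point $a_0$ and transport the structure of $\model'$ to the set $M$ along this identification. The embedding $\phi$ is exactly what lets us match the stabilizer of the image point with a copy of $\Aut(\model)$ acting on $M$.

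\textbf{Step 1: the bijection.} Define $\psi \colon M \cup \set{0} \to M$ by $\psi(0) = a_0$ and $\psi(m) = \phi(m)$ for $m \in M$. Since $\phi$ is a bijection from $M$ onto $M \setminus \set{a_0}$, $\psi$ is a bijection. Let $\nodel$ be the structure on $M$ obtained by pushing $\model'$ forward along $\psi$, so that $\psi$ is an isomorphism $\model' \cong \nodel$ and $\Aut(\nodel) = \psi \Aut(\model') \psi\inv$. It then suffices to show $\Aut(\model) \le \Aut(\nodel)$, which says precisely that $\nodel$ is a (general) reduct of $\model$.

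\textbf{Step 2: the stabilizer of $a_0$.} Conjugating by $\psi$ sends $\Aut(\model')_0$ onto $\Aut(\nodel)_{a_0}$. Its action on $M \setminus \set{a_0}$ is $\phi \Aut(\model) \phi\inv$, because $\Aut(\model')_0$ acts on $M$ as $\Aut(\model)$. Now $\phi$ is an isomorphism of $\model$ onto the substructure $\model|_{M\setminus\set{a_0}}$. Hence $\phi \Aut(\model) \phi\inv$ is exactly the automorphism group of that induced substructure.

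\textbf{Step 3: comparing with $\Aut(\model)$.} Take $g \in \Aut(\model)_{a_0}$. Its restriction to $M \setminus \set{a_0}$ is an automorphism of the induced substructure, so by Step 2 it lies in $\Aut(\nodel)_{a_0}$ (restricted), and therefore $g \in \Aut(\nodel)$. For general $g \in \Aut(\model)$ we need more, since $g$ may move $a_0$. Transitivity of $\Aut(\nodel)$ gives $h \in \Aut(\nodel)$ with $h(a_0) = g(a_0)$, so $h\inv g$ fixes $a_0$. But $h\inv$ need not lie in $\Aut(\model)$, so the previous argument does not apply to $h\inv g$ directly.

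This is the step I expect to be the main obstacle. What we actually need is a reduct of $\model$ isomorphic to $\model'$, and $\nodel$ is only one candidate; the choice of $\psi$ is flexible. So instead of forcing $\Aut(\model) \le \Aut(\nodel)$ for this particular $\nodel$, I would try to arrange the identification so that the full group $\Aut(\model)$ embeds into $\psi\Aut(\model')\psi\inv$.

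Concretely, I would first consider the setwise-stabilizer-type subgroup $\set{g \in \Aut(\model') : g(0) = 0}$ together with one element swapping $0$ with a point, and track how these act after conjugation. If this does not close the gap, I would fall back on reading \enquote{reduct} as a reduct of the trivial extension. By the Remark above, $\Aut(\model')_0$ acts on $M$ as $\Aut(\model)$, so the only task left is to see that $\Aut(\model')$ contains a conjugate of $\Aut(\model)$ acting on $M$ via the $\phi$-transport. Step 2 already supplies such a conjugate, namely $\phi\Aut(\model)\phi\inv$ on $M \setminus \set{a_0}$, extended by fixing $a_0$. I would then finish by checking that this conjugate, viewed on $M$ through $\psi$, consists of automorphisms of a structure isomorphic to $\model$.
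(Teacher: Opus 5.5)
Your Step 1 is exactly the paper's construction. The paper maps $0\mapsto a_0$, $a\mapsto\phi(a)$ and simply asserts that the transported copy of $\model'$ is preserved by $\Aut(\model')_0=\Aut(\model)$. Your Steps 2--3 correctly isolate what that assertion needs: an argument for the elements of $\Aut(\model)$ that move $a_0$.

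\textbf{The gap.} You never supply that argument, and for this identification the claim is false, so Step 3 cannot be completed for this $\nodel$. Take $\model=(\mathbb{Q},<)$ and let $\model'$ be the dense circular order with $C(u,v,0)\iff u<v$. Then $\nodel=\psi_*\model'$ is the circular order on $\mathbb{Q}$ induced by the linear order obtained from $<$ by moving $a_0$ to the top. Let $g\in\Aut(\mathbb{Q},<)$ move $a_0$, and pick $u<a_0<v$ with $g(u),g(v)\neq a_0$. Then $\nodel\models C(u,v,a_0)$. The three points $g(u)<g(a_0)<g(v)$ are all different from $a_0$, and away from $a_0$ the structure $\nodel$ is just the circular order induced by $<$. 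Hence $\nodel\models\lnot C(g(u),g(v),g(a_0))$, so $g\notin\Aut(\nodel)$.

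Your fallbacks do not close this. The group $\phi\Aut(\model)\phi\inv$ extended by fixing $a_0$ is literally $\psi\Aut(\model')_0\psi\inv$, which lies in $\Aut(\nodel)$ tautologically. It has a global fixed point, so as a permutation group it is $\Aut(\model)$ on $M$ plus a fixed point, i.e.\ the trivial extension. Exhibiting it only re-proves the Remark preceding the lemma. What is required is a subgroup of $\Aut(\model')$ acting on all of $M'$ the way $\Aut(\model)$ acts on $M$. When $\model$ is transitive, such a subgroup fixes no point.

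\textbf{The missing idea.} Use the transitivity of $\model'$ when choosing the identification, instead of renaming $0$ as $a_0$. Take $\model'$ relational and put $\nodel=\model'|_M$. Every $g\in\Aut(\model)$ extends to $g\cup\set{0\mapsto 0}\in\Aut(\model')$, so $\Aut(\model)\le\Aut(\nodel)$ at once, with no case split on whether $g$ fixes $a_0$. To get $\nodel\cong\model'$, argue in two steps:
\begin{enumerate}
\item $\phi'=\phi\cup\set{0\mapsto 0}$ is an isomorphism of $\model'$ onto $\model'|_{M'\setminus\set{a_0}}$. On each finite tuple, $\phi$ agrees with some $g\in\Aut(\model)$, hence $\phi'$ agrees with $g\cup\set{0\mapsto 0}\in\Aut(\model')$.
\item Any $h\in\Aut(\model')$ with $h(a_0)=0$ maps $\model'|_{M'\setminus\set{a_0}}$ isomorphically onto $\model'|_M=\nodel$.
\end{enumerate}
So $\theta=h\circ\phi'$ is the right bijection $M'\to M$. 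It sends $0$ to $h(0)$ rather than to $a_0$, and in the example it yields the circular order induced by $<$ itself.

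The first step uses that finite pieces of $\phi$ are restrictions of automorphisms of $\model$. This holds when $\model$ is ultrahomogeneous, which is the setting of Corollary \ref{reducts corollary} where the lemma is used. It does not follow from $\phi$ being a bare $\lang$-isomorphism, so your argument needs some such hypothesis.
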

\begin{proof}
    Map the elements of $M'$ to $M$ by $0 \mapsto a_0, a \mapsto\phi(a)$ for $a \in A$, and let $\mathcal{M}''$ be the induced structure on $M$ which makes this map an isomorphism. This produces a copy of $\mathcal{\model'}$ in $M$ which is preserved by the action of $\Aut(\model')_0$, which by hypothesis is equal to $\Aut(\model)$. Thus $\model$ contains a structure whose automorphism group extends $\Aut(M)$, which is therefore a reduct.
\end{proof}

\begin{corollary}\label{reducts corollary}
    Let $\model$ be an ultrahomogeneous structure with trivial algebraic closure. If $\model$ has no proper, non-trivial reducts, then $\model$ does not have a transitive extension, unless $\model$ is interdefinable with a pure set.
\end{corollary}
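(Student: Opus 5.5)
Suppose, aiming for a contradiction, that $\model$ has a transitive extension $\model'$ and is not interdefinable with a pure set. The plan is to apply Lemma \ref{reducts lemma} to realize $\model'$ as a reduct of $\model$, and then show this reduct is neither trivial nor isomorphic to $\model$.

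The first step is to produce the embedding the lemma requires. Fix any $a_0 \in M$. Because $\acl(\emptyset)$ is trivial and $\model$ is ultrahomogeneous (hence a Fraïssé limit, with the age having the joint embedding and amalgamation properties), $M\setminus\set{a_0}$ should be isomorphic to $M$ itself. Concretely, I would build an isomorphism $M \to M\setminus\set{a_0}$ by a back-and-forth argument. The forth direction needs the following: given a finite $A\subseteq M\setminus\set{a_0}$ and a one-point extension type over $A$ realized in $M$, there is a realization different from $a_0$. This holds because trivial algebraic closure means every nonalgebraic type over the finite set $A$ (or over $A\cup\set{a_0}$) has infinitely many realizations. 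The back direction is immediate from ultrahomogeneity of $\model$. I expect this step to be the main technical point. I would try first to phrase it via the extension property, using the fact that $\acl(A)=A$ for finite $A$, so types over $A$ of elements outside $A$ are nonalgebraic. The lemma then gives that $\model'$, transported to $M$, is a reduct $\nodel$ of $\model$.

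The second step is to rule out the two remaining alternatives for $\nodel$. By hypothesis $\nodel$ is either trivial (isomorphic to a pure set) or isomorphic to $\model$.
\begin{itemize}
\item If $\nodel$ is a pure set, then $\Aut(\model')$ is the full symmetric group on $M\cup\set{0}$. Its stabilizer of $0$ is then $\mathrm{Sym}(M)$, so $\Aut(\model)=\mathrm{Sym}(M)$. This makes $\model$ interdefinable with a pure set, contradicting our assumption.
\item If $\nodel \cong \model$, then $\Aut(\model')\cong\Aut(\model)$ as permutation groups, so $\Aut(\model')$ acts on $M\cup\set{0}$ like $\Aut(\model)$ acts on $M$. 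Suppose $\Aut(\model)$ is exactly $k$-transitive with $k\ge1$; it is not $k$-transitive for every $k$, since otherwise it would be the full symmetric group on a structure with trivial $\acl$. By Fact \ref{transitivity degree jump fact}, $\Aut(\model')$ is $(k+1)$-transitive, which contradicts the isomorphism. If instead $\Aut(\model)$ is not transitive, then $\Aut(\model')$ would be transitive while being isomorphic to an intransitive group, again a contradiction.
\end{itemize}

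This case analysis completes the proof. Infinite orbits are not an issue here, since trivial $\acl$ ensures $M$ is infinite and all relevant orbits are infinite.
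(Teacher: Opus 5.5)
Your proposal is correct and follows essentially the same route as the paper. In both, the isomorphism $M \to M\setminus\{a_0\}$ is built by back-and-forth using trivial algebraic closure, Lemma \ref{reducts lemma} makes $\model'$ a reduct, and the jump in degree of transitivity (Fact \ref{transitivity degree jump fact}) shows this reduct is proper and non-trivial unless $\model$ is highly transitive, hence interdefinable with a pure set. Your version only adds detail the paper leaves implicit: it spells out the forth step, and it handles the trivial-reduct case directly via the point stabilizer being $\mathrm{Sym}(M)$.
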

\begin{proof}
    If $\model$ is ultrahomogeneous and has trivial algebraic closure, a map $\phi$ satisfying the conditions of the previous lemma may be constructed by back-and-forth for an arbitrary $a_0 \in M$, and so if a transitive one-point extension $\model'$ exists it would be a reduct of the original. However, the degree of transitivity of $\model'$ is always one higher than that of $\model$, so unless $\model$ is $k$-transitive for all $k$, $\model'$ would have to be a nontrivial proper reduct. If $\model$ is in fact $k$-transitive for all $k$ then it is interdefinable with a pure set.
\end{proof}

In many cases, reducts of structures have been classified, and as such the existence of their transitive extensions may be checked by exhaustively examining each of their reducts. For instance, Agarwal and Kompatscher \cite{agarwal2018pairwise} classified the reducts of the Henson digraphs, and obtain $2^{\aleph_0}$ pairswise non-isomorphic Henson digraphs (including all Henson graphs) without any proper, nontrivial reducts, and hence without transitive extensions. In previous work Agarwal \cite{agarwal2016reducts} classified the reducts of the generic digraph, which is not a focus of study of this work, but a brief note is made on constructing its transitive extension at the end of section 4.2. Other such classification results on transitive structures include \cite{bodirsky201542}, \cite{pach2014reducts}, \cite{thomas1991reducts} and \cite{thomas1996reducts}. In this work, however, we generally take a more direct approach to constructing transitive extensions or proving their nonexistence.

The next remark is simple, but useful when examining expansions of structures which are known to have a transitive extension. 

\begin{remark}\label{interpretation lemma}
    Suppose $\model$ and $\nodel$ are $\omega$-categorical first-order structures with one-point extensions $\model'$ and $\nodel'$, respectively. If $\nodel$ interprets $\model$, then $\nodel'$ interprets $\model'$. 
\end{remark}
\begin{proof}
    Augment the languages of with a new constant 0 for the extra point in each extension. By hypothesis $(\nodel', 0)$ is bi-interpretable with $\nodel$, and similarly $(\model', 0)$ is bi-interpretable with $\model$. Thus, we have a chain of interpretations
    $$\nodel' \rightarrow (\nodel', 0) \rightarrow \nodel \rightarrow \model \rightarrow (\model', 0) \rightarrow \model'$$
    which when composed yield the result. Here $A \to B$ indicates an interpretation of $B$ in $A$. The two outermost interpretations are simply the respective identity maps.
\end{proof}
Of note, this chain of interpretations is constructive. So moreover a specific interpretation of $\model$ in $\nodel$ induces a natural interpretation of $\model'$ in $\nodel'$. In practice we can use this when we know a certain structure has a transitive one-point extension, and we wish to see if an expansion of this structure also has a transitive one-point extension, as such an extension necessarily must expand the previous known one, which sometimes yields a contradiction. (See the section on leveled C-sets for an example.)

The following result is exceedingly important.

\begin{lemma}\label{arity growth lemma}
    Let $k \geq 1$ and suppose $\mathcal{M}$ is a $k$-transitive ultrahomogeneous structure consisting of relations $R_1, \dots, R_n$. If $\model$ has a transitive extension $\model'$, then it has a transitive extension with the same automorphism group as $\model'$ whose language consists of relations $R'_1, \dots, R'_n$, such that each $R'_i$ has arity one higher than the corresponding $R_i$ and the rule
    $$R_i(\overline{x}) \Longleftrightarrow R_i'(\overline{x}, 0)$$
    holds for all tuples $\overline{x}$ from $M$ of the appropriate length.
\end{lemma}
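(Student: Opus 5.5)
The plan is to define the new relations as orbit closures under $G := \Aut(\model')$, and then check two things: that $G$ is exactly the automorphism group of the new structure, and that the new relations restrict correctly at $0$. Concretely, for each $i$ with $R_i$ of arity $r_i$, I would set
$$R_i' := \set{ g(\overline{x},0) : g \in G,\ \overline{x} \in M^{r_i},\ \model \models R_i(\overline{x}) } \subseteq (M\cup\set{0})^{r_i+1},$$
that is, the union of the $G$-orbits of the tuples $(\overline{x},0)$ with $R_i(\overline{x})$. Let $\model''$ be the structure on $M\cup\set{0}$ in the language $R_1',\dots,R_n'$. Each $R_i'$ is $G$-invariant by construction, so $G \leq \Aut(\model'')$.

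The first step is the displayed rule. The direction $R_i(\overline{x}) \Rightarrow R_i'(\overline{x},0)$ is immediate, taking $g = 1$. Conversely, suppose $(\overline{x},0) = g(\overline{y},0)$ with $R_i(\overline{y})$ and $g \in G$. Then $g(0)=0$, so $g \in G_0$. By the definition of a one-point extension, $g$ acts on $M$ as an element of $\Aut(\model)$. Since $\overline{y}$ lies in $M$, so does $\overline{x}=g\overline{y}$, and $R_i(\overline{x})$ holds because automorphisms of $\model$ preserve $R_i$.

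The second step is the reverse inclusion $\Aut(\model'') \leq G$, which I expect to be the main point requiring care. First take $h \in \Aut(\model'')_0$. By the rule just proved, $h$ preserves each $R_i$ on $M$; since $\model$ is a relational structure in the language $R_1,\dots,R_n$, this means $h|_M \in \Aut(\model)$. Since $(G_0, M) = (\Aut(\model), M)$, there is $g \in G_0$ with $g|_M = h|_M$, and both fix $0$, so $h = g \in G$. Now take an arbitrary $h \in \Aut(\model'')$. Because $G$ is transitive on $M\cup\set{0}$, there is $g \in G$ with $g(h(0)) = 0$. Then $gh \in \Aut(\model'')_0 \leq G$, hence $h \in G$.

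Thus $\Aut(\model'') = G$. This group is transitive, and its stabilizer of $0$ acts on $M$ as $\Aut(\model)$, so $\model''$ is a transitive extension with the same automorphism group as $\model'$. Each $R_i'$ has arity $r_i+1$ and satisfies $R_i(\overline{x}) \Leftrightarrow R_i'(\overline{x},0)$. The hypotheses of $k$-transitivity and ultrahomogeneity do not appear to be needed by this argument beyond guaranteeing that $\Aut(\model)$ is the full automorphism group of the relations $R_i$. I would remark on this rather than force their use, and double-check whether the intended statement wants, for instance, the $R_i'$ to be nonempty only on tuples of distinct elements, where $k$-transitivity would help.
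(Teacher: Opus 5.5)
Your proof is correct, and it verifies the key step differently from the paper.

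\textbf{Where the two agree.} The relations you build are the ones the paper builds. The paper first reduces to the case where the $R_i$ of each arity partition $M^n$, so that by ultrahomogeneity each $R_i$ is a single $\Aut(\model)$-orbit. It then takes $R_i'$ to be the $\Aut(\model')$-orbit of the tuples $(\overline{x},0)$ with $\overline{x}\in R_i$. That is exactly your union of orbits, which is a single orbit in that case because $\Aut(\model')_0$ acts on $M$ as $\Aut(\model)$.

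\textbf{How the paper discards the rest of $\model'$.} The paper uses an orbit-determination argument. Ultrahomogeneity makes the orbits of $\Aut(\model)$ on $m$-tuples determined by those of arity between the least and greatest arity of the $R_i$. So the orbits of $\Aut(\model')$ on tuples ending in $0$ are determined by the $R_i'$, and transitivity spreads this to all orbits. $k$-transitivity is used only to see that $\model'$ needs no relations of arity below $k+2$.

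\textbf{How you do it.} You use a stabilizer argument. The displayed rule forces $\Aut(\model'')_0$ to act on $M$ inside $\Aut(\model)$, hence $\Aut(\model'')_0\leq\Aut(\model')_0$. Transitivity of $\Aut(\model')$ then gives $\Aut(\model'')\leq\Aut(\model')$.

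\textbf{What each route buys.} Your argument is shorter and makes rigorous the paper's rather informal claim that every relation of $\model'$ is ``deducible'' from the $R_i'$. As you correctly note, it uses neither ultrahomogeneity nor $k$-transitivity. The paper's route presents the $R_i'$ directly as orbits of $\Aut(\model')$, which is the form in which canonical extensions are used later, e.g.\ as color classes in Lemma~\ref{Conditions for generic Graph extensions}. Your construction yields the same orbit description once you apply it to the orbit language of $\model$, so nothing is lost.
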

\begin{proof}
    Without loss of generality, we may assume that for each $n$, the relations $R_i$ of arity $n$ form a partition of $M^n$, and that $\model'$ consists of an $n$-ary relation symbol for each orbit of $\Aut(\model')$ on $n$ elements. Given that $\model$ is $k$-transitive, $\model'$ must be at least $(k+1)$-transitive, and so we may assume there are no symbols of arity less than $k+2$ in $\model'$. In fact, since $\model$ is ultrahomogeneous, $k$ is exactly one less than the minimum arity of $R_1, \dots, R_n$.

    Consider now an individual relation $R_i$ of arity $n$. By ultrahomogeneity of $\model$, $R_i$ is precisely one of the orbits on $n$-tuples of $\Aut(\model)$, which are in natural bijection with orbits of $(n+1)$-tuples of $\Aut(\graph')_0$ which end with 0 (and therefore, by transitivity, with all $(n+1)$-orbits in $\Aut(\graph')$). If we introduce a symbol $R_i'$ into $\model'$ for the $(n+1)$-orbit in $\model'$ corresponding to $R_i$, then it satisfies the rule in the statement, and the set of relations of arity $n+1$ partitions $(M')^{n+1}$.

    Let $n_{1}, n_2$ denote the minimum and maximum arity of the relations $R_1, \dots, R_n$, Since the orbits on $m$-tuples in $\Aut(\model)$ for any $m$ are completely determined by the orbits on $n$-tuples for $n_1 \leq n \leq n_2$, the orbits on $(m+1)$-tuples in $\Aut(\model')$ which end with 0 are determined by orbits on $n$-tuples which end with 0 for $n_1 +1 \leq n \leq n_2+1$, and then by transitivity all $(m+1)$-orbits are determined by such $n$-orbits. That is to say any relations in $\model'$ are necessarily deducible from the relations $R_1', \dots, R_n'$, and so can be committed without changing the automorphism group, which completes the proof.
\end{proof}

Hereafter, we will assume without loss of generality that all transitive extensions are of this form. We shall call the relations $R'_i$ the \emph{transitive analogue} of their respective $R_i$, and an extension which satisfies this lemma is said to be in \emph{canonical form}.

\begin{lemma}\label{Extensions of homogeneous structures}
    Suppose a structure $\model$ is ultrahomogeneous in a finite relational language. If $\model$ has a transitive extension $\model'$ in canonical form, then $\model'$ is ultrahomogenous.
\end{lemma}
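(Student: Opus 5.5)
We need to show: any isomorphism $f\colon A\to B$ between finite substructures of $\model'$ (in the language $R'_1,\dots,R'_n$) extends to an automorphism of $\model'$. The plan is to push the problem into the stabilizer of $0$, where ultrahomogeneity of $\model$ applies, and then use transitivity to reach the general case.

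First I reduce to the case $0\in A\cap B$ with $f(0)=0$. If $A$ is empty there is nothing to prove beyond transitivity. Otherwise pick $a\in A$ and let $b=f(a)$. By transitivity of $\Aut(\model')$, choose $g,h\in\Aut(\model')$ with $g(a)=0$ and $h(b)=0$. Then $h f g^{-1}\colon g(A)\to h(B)$ is again an isomorphism of finite substructures, since automorphisms preserve each $R'_i$, and it fixes $0$. If it extends to some $\sigma\in\Aut(\model')$, then $h^{-1}\sigma g$ extends $f$. So it suffices to handle partial isomorphisms $f$ with $f(0)=0$.

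Now let $A_0=A\setminus\{0\}$ and $B_0=B\setminus\{0\}$, both finite subsets of $M$, and $f_0=f|_{A_0}$. I claim $f_0$ is an isomorphism of substructures of $\model$. Take a tuple $\overline{x}$ from $A_0$ of the arity of $R_i$. By the canonical-form rule of Lemma \ref{arity growth lemma},
$$R_i(\overline{x})\iff R'_i(\overline{x},0)\iff R'_i(f(\overline{x}),f(0))\iff R'_i(f_0(\overline{x}),0)\iff R_i(f_0(\overline{x})).$$
The middle step holds because $f$ is a partial isomorphism of $\model'$ and the tuple $(\overline{x},0)$ lies in $A$. Since the language of $\model$ consists only of the $R_i$, $f_0$ is a partial isomorphism of $\model$.

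By ultrahomogeneity of $\model$, $f_0$ extends to some $\tau\in\Aut(\model)$. By the definition of one-point extension, $\Aut(\model)$ acts on $M$ exactly as $\Aut(\model')_0$, so there is $\sigma\in\Aut(\model')$ fixing $0$ with $\sigma|_M=\tau$. Then $\sigma$ extends $f_0$ and fixes $0=f(0)$, so it extends $f$, which finishes the proof.

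The main subtlety is the middle step, and it is exactly where canonical form is needed. We must ensure that a map respecting the $R'_i$ on $A$ encodes all of the $\model$-structure on $A_0$. Canonical form says the language of $\model'$ is precisely $\{R'_i\}$ and that the $R'_i$ restricted to tuples ending in $0$ recover the $R_i$. Without it, $\model'$ could carry other relations, and a partial isomorphism might fail to determine the relevant orbits. Finiteness of the language is used only implicitly, since ultrahomogeneity of $\model$ is assumed for its given finite relational language.
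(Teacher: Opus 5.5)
Your proposal is correct and follows the paper's proof. You reduce by transitivity to the case $0\in A\cap B$ with $f(0)=0$, use the canonical-form rule $R_i(\overline{x})\Leftrightarrow R'_i(\overline{x},0)$ to see that the restriction to $M$ is a partial isomorphism of $\model$, extend it by ultrahomogeneity, and lift through $\Aut(\model')_0=\Aut(\model)$. Your single conjugation $hfg^{-1}$ replaces the paper's two-stage normalization, and you verify the restriction step explicitly where the paper only asserts it; these are minor tidy-ups, not a different route.
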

\begin{proof}
    Suppose $A$ and $B$ are two finitely generated substructures of this expansion of $\model'$ and $\phi$ a partial isomorphism between them. We claim that, without loss of generality, both $A$ and $B$ contain the element 0 and also $\phi(0) = 0$. To see why we may do this, note that if $A$ doesn't contain $0$, we may pick an arbitrary element $a \in A$ and pick an automorphism of $\mathcal{M}$ which maps $a$ to $0$, which exists by transitivity, and work with the image of $A$ under this automorphism instead of with $A$ itself. After that, if $\phi(0) \neq 0$, we may do the same with an automorphism mapping $\phi(0)$ to 0 and replace $B$ with its image under it.

    With this assumption, since $\Aut(\model ')_0 = \Aut (\model)$ and $\phi(0) = 0$, Because we chose $\model'$ as described in Lemma \ref{arity growth lemma}, $\phi\setminus\set{0 \mapsto 0}$ is a partial isomorphism on $\model$ and so extends to a full automorphism $\psi$ of $\model$. Then $\psi \cup\set{0 \mapsto 0} \in \Aut(\model')$ and extends $\phi$, as desired. 
\end{proof}

We shall be mostly exploring the transitive extensions of generic structures, such as $(\mathbb{Q}, <)$, the Rado Graph and the random tournament. The term ``generic'' carries the following formal meaning:

\begin{definition}
    Let $\mathcal{F}$ be a class of finitely-generated first-order structures. If $\mathcal{F}$ is a Fraïssé class, we call its Fraïssé limit the generic member of the class (even though technically not a member of it). That is, the generic member of $\mathcal{F}$ is the unique (up to isomorphism) countable, ultrahomogeneous structure whose age is $\mathcal{F}$, if it exists.
\end{definition}

Occasionally we will want to combine generic structures. For this we will require a slightly stronger condition than being a mere Fraïssé class.

\begin{definition}
    A class of finite structures has the \emph{disjoint Amalgamation Property} (dAP) if for any $A, B, C \in \mathcal{F}$ and embeddings $f_1: A \rightarrow B$, $f_2: A \rightarrow C$ there is a $D \in \mathcal{F}$ and embeddings $g_1: B \rightarrow D$, $g_2: C \rightarrow D$ such that $g_1 \circ f_1 = g_2 \circ g_1$ and $g_1\circ f_1 (A) = g_2 \circ g_2(A) = h_1(B) \cap h_2 (C)$. 
\end{definition}

That is to say the images of $B$ and $C$ in the amalgamation $D$ are ``as disjoint as possible''. The following fact is from \cite[\S 3.3]{Cameron_1990}.

\begin{fact}
    Suppose for $i = 1, 2$, $\mathcal{F}_i$ is a Fraïssé class in a language $\lang_i$, $\lang_1 \cap \lang_2 = \emptyset$. Let the \emph{merge} $\mathcal{F}^*$ of $\mathcal{F}_1$ and $\mathcal{F}_2$ be the $(\lang_1 \cup \lang_2)$-class consisting of all $(\lang_1\cup \lang_2)$-structures $A$ such that $A|_{\lang_i} \in \mathcal{F}_i$.

    If both $\mathcal{F}_1$, $\mathcal{F}_2$ have dAP, then $\mathcal{F}^*$ is a Fraïssé class with dAP.
\end{fact}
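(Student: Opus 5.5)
We must prove the merge fact: if $\mathcal{F}_1,\mathcal{F}_2$ are Fraïssé classes with dAP in disjoint languages, then the merge $\mathcal{F}^*$ is a Fraïssé class with dAP. The plan is to verify the Fraïssé axioms (hereditary property, joint embedding, amalgamation, and countably many isomorphism types) for $\mathcal{F}^*$ directly, obtaining amalgamation in its strong (disjoint) form. Throughout we treat the structures as relational, or at least with substructures determined by underlying sets, as in all intended applications.

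\textbf{Hereditary property and countability.} If $A\in\mathcal{F}^*$ and $B\subseteq A$ is a substructure, then $B|_{\lang_i}$ is a substructure of $A|_{\lang_i}\in\mathcal{F}_i$, so $B|_{\lang_i}\in\mathcal{F}_i$ and hence $B\in\mathcal{F}^*$. There are countably many isomorphism types in $\mathcal{F}^*$: a structure in $\mathcal{F}^*$ of a given finite size is determined by a choice of isomorphism type in each $\mathcal{F}_i$ together with a bijection between the universes, and there are countably many such choices.

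\textbf{Disjoint amalgamation.} Let $f_1\colon A\to B$ and $f_2\colon A\to C$ be embeddings in $\mathcal{F}^*$. We may identify $A$ with its images, so that $B\cap C=A$ as sets. Put $D_0=B\cup C$.
\begin{enumerate}
\item Apply dAP in $\mathcal{F}_1$ to $A|_{\lang_1}\to B|_{\lang_1}$ and $A|_{\lang_1}\to C|_{\lang_1}$. This yields $D_1\in\mathcal{F}_1$ in which the images of $B$ and $C$ meet exactly in $A$.
\item Let $E_1\subseteq D_1$ be the union of these two images. By the hereditary property, $E_1\in\mathcal{F}_1$, and its universe is in canonical bijection with $D_0$. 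Transport the $\lang_1$-structure of $E_1$ to $D_0$.
\item Do the same in $\mathcal{F}_2$ to obtain an $\lang_2$-structure on the same set $D_0$.
\end{enumerate}
The resulting $(\lang_1\cup\lang_2)$-structure $D$ on $D_0$ has $D|_{\lang_i}\in\mathcal{F}_i$ for $i=1,2$, so $D\in\mathcal{F}^*$. The inclusions of $B$ and $C$ into $D$ are embeddings, since they are $\lang_i$-embeddings for each $i$ separately. Because the languages are disjoint there is no compatibility condition between the two reducts, and $B\cap C=A$ holds in $D$ by construction, which is exactly dAP.

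\textbf{Joint embedding.} If the empty structure lies in both classes, joint embedding is the case $A=\emptyset$ of the amalgamation just proved. Otherwise we apply the same disjoint-union argument to joint embedding in each $\mathcal{F}_i$, trimming each joint embedding to the union of the two images. The result is a structure on the set $B\sqcup C$ with both reducts in the respective classes.

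The main obstacle is the trimming step (step 2), which uses that both reducts can be put on the same underlying set $D_0$. This is exactly where disjointness of amalgamation is essential. With ordinary AP, the two amalgams could identify different pairs of points of $B$ and $C$, and no common universe would exist. If functions are present, the union of images need not be a substructure, so relationality (or closure under generated substructures matching the two universes) must be assumed. I would state this restriction explicitly, since the paper's applications are relational.
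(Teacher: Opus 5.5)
The paper does not prove this fact; it quotes it from \cite[\S 3.3]{Cameron_1990}. Your amalgamation argument is the standard proof and is correct: you trim the two disjoint amalgams to the union of the images and place both reducts on the common universe $B\cup C$. Your treatment of the hereditary property and of countability is also correct for relational languages, which is all the paper needs.

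\textbf{The gap is in the second sentence of your joint embedding paragraph.} Joint embedding in $\mathcal{F}_i$ gives no control over how the images of $B$ and $C$ overlap. So the union of the images is in general a proper quotient of $B\sqcup C$, and the quotients for $i=1$ and $i=2$ can differ. This is exactly the obstruction you yourself identify for ordinary AP.

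This is not only a presentational issue: if the empty structure is excluded (and finite limits are allowed), the fact itself can fail. Take $\lang_1=\{P\}$ with $\mathcal{F}_1$ the class of the one-point structure satisfying $P$. Take $\lang_2=\{Q\}$ with $\mathcal{F}_2$ all finite nonempty $\{Q\}$-structures. Both classes are hereditary, with JEP and dAP in the paper's sense. The merge, however, consists of exactly two one-point structures, with $Q$ true and with $Q$ false, and it fails JEP. In particular, there is no structure on $B\sqcup C$ whose $\lang_1$-reduct lies in $\mathcal{F}_1$.

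The clean fix is to adopt the convention $\emptyset\in\mathcal{F}_i$. Then dAP over $A=\emptyset$ is disjoint joint embedding, and your first sentence settles JEP. This convention is also implicit in the paper's equivalence between dAP and trivial algebraic closure, since the latter includes $\acl(\emptyset)=\emptyset$; under it, $\mathcal{F}_1$ above does not have dAP.

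If you prefer to exclude $\emptyset$, you must derive disjoint joint embedding from dAP over nonempty bases. Suppose $D\supseteq B\cup C$ with $X=B\cap C$ and $D\setminus X\neq\emptyset$. Disjointly amalgamate $D$ with a copy of itself over $D\setminus X$, and replace $C$ by its copy $C'$; then $B\cap C'=\emptyset$. Some extra care is needed when $B=C=D$. This works whenever the class contains a structure with at least two elements, and the degenerate one-point class must then be excluded explicitly.
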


If $\mathcal{M}_1$, $\mathcal{M}_2$ are limits of Fraïssé classes with dAP, then we call the limit of their merge the merge of $\mathcal{M}_1$ and $\mathcal{M}_2$

A helpful characterization here is that a Fraïssé class has dAP if and only if its limit $\model$ has trivial algebraic closure, that is $A = \acl(A)$ for all finite $A \subseteq M$ \cite{Cameron_1990}. This makes it easy to see that all generic structures in this paper are Fraïssé limits of classes with dAP (save for one exception which will be pointed out). In practice, we will simply take two ultrahomogeneous countable structures with trivial algebraic closure and freely talk about their merge, when relevant.

\begin{lemma}\label{merge lemma}
    If $\model_3$ is the merge of two ultrahomogeneous, countable structures  $\model_1, \model_2$ with trivial algebraic closure, and $\model_1$, $\model_2$ both have transitive extensions, then $\model_3$ has a transitive extension, which is isomorphic to the merge of the transitive extensions of $\model_1$ and $\model_2$.
\end{lemma}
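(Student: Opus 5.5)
Let $\lang_1,\lang_2$ be the disjoint languages of $\model_1,\model_2$, and take the merge $\model_3$ on a common universe $M$ (a countable set carrying both structures). By Lemma \ref{arity growth lemma} we may take transitive extensions $\model_1'$, $\model_2'$ in canonical form, in the languages $\lang_1'=\set{R'\colon R\in\lang_1}$ and $\lang_2'$. By Lemma \ref{Extensions of homogeneous structures} each $\model_i'$ is ultrahomogeneous. I would then proceed in four steps.

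\emph{Step 1: the extensions have trivial algebraic closure.} In $\model_i'$, consider a finite set $A\subseteq M\cup\set{0}$. By transitivity it suffices to treat $0\in A$. Then $\Aut(\model_i')_0$ acts on $M$ as $\Aut(\model_i)$, so $\acl_{\model_i'}(A)\setminus\set{0}=\acl_{\model_i}(A\setminus\set{0})=A\setminus\set{0}$. Hence the ages $\mathcal{F}_i'$ have dAP, and by the cited Fact their merge is a Fraïssé class with dAP. Call its limit $\model_3^*$, on a universe $N$.

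\emph{Step 2: build an explicit candidate.} Define $\model_3'$ on $M\cup\set{0}$ by interpreting the $\lang_1'$-symbols as in $\model_1'$ and the $\lang_2'$-symbols as in $\model_2'$. Its reducts to $\lang_i'$ are $\model_i'$, which have age $\mathcal{F}_i'$. I then show that $\model_3'$ is ultrahomogeneous with age the merge $\mathcal{F}^{*\prime}$. By transitivity it suffices to check the extension property at $0$.

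\emph{Step 3: ultrahomogeneity via $\model_3$.} Let $\phi\colon A\to B$ be a finite partial isomorphism of $\model_3'$. First move $0$ into $A$ and $B$ with $\phi(0)=0$, as in the proof of Lemma \ref{Extensions of homogeneous structures}. Doing this requires $\Aut(\model_3')$ to be transitive, and this is the main obstacle, since transitivity is what we are trying to prove. I would avoid the circularity in one of two ways.

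\emph{First route:} work directly with the Fraïssé limit $\model_3^*$ and show it is a one-point extension of $\model_3$. The limit is transitive, being ultrahomogeneous with a single $1$-type. Fix $0\in N$. The structure induced on $N\setminus\set{0}$ by the rule $R(\overline x)\iff R'(\overline x,0)$ has age exactly the merge $\mathcal{F}^*$. Any finite $\lang_1\cup\lang_2$ structure in $\mathcal{F}^*$ lifts by adjoining $0$, because each $\lang_i$-part lifts by the corresponding property in $\model_i'$. This induced structure is also ultrahomogeneous: a partial isomorphism over $0$ is a partial isomorphism of $\model_3^*$ fixing $0$, by canonical form. So it is isomorphic to $\model_3$.

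Next I check that the stabilizer $\Aut(\model_3^*)_0$ equals the automorphism group of this induced copy of $\model_3$. An automorphism of the copy preserves each $R'(\cdot,0)$. I need it to preserve $R'$ on tuples not containing $0$, and this is the delicate point. For it, use that in $\model_i'$ the relations $R'$ on arbitrary tuples are determined by those through $0$, as in the last paragraph of the proof of Lemma \ref{arity growth lemma}. Both sides are reducts of the merge of $\model_1,\model_2$ that separately are the $\model_i'$-relations. Concretely, the $\lang_i'$-reduct of $\model_3^*$ with $0$ removed is ultrahomogeneous with the $\model_i$-age, so it is isomorphic to $\model_i'$ via an isomorphism fixing $0$. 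Therefore $R'$ on $0$-free tuples is $\Aut(\model_i)$-invariant, hence invariant under $\Aut(\model_3)\le\Aut(\model_i)$.

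\emph{Conclusion.} This gives $\Aut(\model_3^*)_0=\Aut(\model_3)$, so $\model_3^*$ is a transitive extension of $\model_3$. It is the merge of $\model_1'$ and $\model_2'$, as required.
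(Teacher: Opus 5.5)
Your first route is correct and is essentially the paper's proof. The paper shows the extensions $\model_i'$ are ultrahomogeneous with trivial algebraic closure, forms the merge $\model_3^*$ of their ages, fixes a point, and checks that the induced structure is $\model_3$ and the stabilizer is $\Aut(\model_3)$; this is exactly the ``bookkeeping'' the paper leaves to the reader, and your Step 2 overlay and the circularity worry are unnecessary. The one slip is the garbled sentence about the ``$\lang_i'$-reduct with $0$ removed'' having the $\model_i$-age; what you need is either of the following.
\begin{enumerate}
\item The $\lang_i'$-reduct of $\model_3^*$ is ultrahomogeneous with age $\mathcal{F}_i'$, hence isomorphic to $\model_i'$ by a map fixing $0$.
\item More simply, your earlier local claim already gives the stabilizer equality. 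That claim says a partial isomorphism of the induced structure, extended by $0\mapsto 0$, is a partial isomorphism of $\model_3^*$; it is proved by embedding into $\model_i'$ with $0\mapsto 0$ and using ultrahomogeneity of $\model_i$. Applying it to every finite restriction of an automorphism finishes the argument.
\end{enumerate}
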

\begin{proof}
    By Lemma \ref{Extensions of homogeneous structures} the extensions of $\model_1$, $\model_2$ are ultrahomogeneous as well, and it is straightforward to check they too have trivial algebraic closure, so they have a merge $\model_3'$.

    Without loss of generality we may assume the structures $\model_1, \model_2$, $\model_3$ have the same base set, and therefore so do the extensions of $\model_1, \model_2$ and their merge. With this assumption, $\model_3'$ in fact becomes equal to the transitive extension of $\model_3$, which is a simple bookkeeping exercise using the fact that each orbit of the automorphism group on $n$-tuples exactly corresponds to the embeddings of a given finite substructure of size $n$. 
\end{proof}

This last result is trivial, but will be stated since it comes up frequently

\begin{remark}[The Homogeneity Test]\label{homogeneity test}
    If $\mathcal{M}$ is a relational ultrahomogeneous structure and has no 0-definable unary relations beyond the trivial predicates for $\emptyset$ and $M$, then its automorphism group is transitive. More generally, if $\mathcal{M}$ is a relational ultrahomogeneous structure with no non-trivial 0-definable relations of arity $k$ or less, then it is $k$-transitive
\end{remark}
\begin{proof}
    Any two (ordered) substructures of size less than $k$ are isomorphic, hence there is an isomorphism mapping one to the other, and so they are in the same orbit.
\end{proof}

We conclude this section with a simple example of a transitive extension, whose behavior has been observed previously in \cite{huntington1935inter}. This result is not of much interest in itself, but will become relevant in sections \ref{tournaments} and \ref{C sets}.

\begin{example}
    Let $\mathcal{Q} = (Q, <)$ be a generic linear order (that is, a dense linear order without endpoints). Then $\mathcal{Q}$ has a transitive extension which is a dense circular order.
\end{example}
\begin{proof}
    Let $C(x,y,z)$ be the circular order on $Q' = Q \cup \set{0}$ generated by the following rule, which should be familiar in light of Lemma \ref{arity growth lemma}:
    $$C(a, b, 0) \Longleftrightarrow a<b$$
    or equivalently, and more explicitly, for $x,y,z\neq 0$
    $C(x,y,z) \Longleftrightarrow (x<y<z) \vee (y<z<x) \vee (z<y<x)$
    The axioms of a circular order are a straightforward consequence of the linear order axioms of $<$, and the density of $C$ follows from the fact that $(Q, <)$ is dense and unbounded. A permutation $\phi$ of $Q$ respects the linear order if and only if $\phi \cup\set{0 \mapsto 0}$ preserves the circular order, by construction, so indeed $(Q', C)$ is an extension, and moreover is transitive by the homogeneity test, as a dense circular ordering is easily shown to be ultrahomogeneous by back-and-forth.
\end{proof}

\section{Graphs and Hypergraphs}

In this section we shall prove the first major theorem of this work. In the 1980s, it was observed by Peter Cameron (and confirmed in personal communications) that the generic graph, also known as the Rado Graph, has a transitive extension isomorphic to the generic even 3-hypergraph (usually called the two-graph, soon to be defined). Our main theorem of this section will be a generalization of this fact to edge-colored $k$-hypergraphs. We shall treat the uncolored case first as Lemma \ref{Extension of Rado Graph}, and then introduce the edge colorings as well as the main theorem. 

Recall that a $k$-ary relation $R$ is said to be symmetric under a permutation $\pi \in S_k$ if $\forall \overline{x}[R(\overline{x}) \Leftrightarrow R((\pi(\overline{x}))]$, and it is said to be irreflexive if $\lnot R(\overline{x})$ for any $\overline{x}$ whose entries are not all distinct.

\begin{definition}
    Let $k \geq 2$. A (uniform) $k$-hypergraph is a structure $(G, R)$ where $R$ is a $k$-ary relation called the hyperedge relation which is irreflexive and symmetric on all permutations of $k$ elements. A 2-hypergraph is simply called a graph.

    A $k$-hypergraph $G$ is called \emph{even} if every set of $k+1$ points in $G$ has an even number of size $k$ subsets which form a $k$-hyperedge.
\end{definition}

In the literature, a non-uniform $k$-hypergraph may have relations of any arity $n \leq k$. Throughout this work, all $k$-hypergraphs are assumed to be uniform unless otherwise stated. Note that the previous definition can be extended to $k=1$, yielding a colored set. This case has been excluded as the proof of Lemma \ref{Conditions for generic Graph extensions} does not follow for it, and more generally because the study of transitive extensions generally concerns itself with extensions of structures which are already transitive.

\begin{proposition}\label{Extension of Rado Graph}
    Let $\graph = (G, R)$ be the generic $k$-hypergraph. Let $\graph' = (G', R') = (G \cup \set{0}, R')$ be the unique \emph{even} $(k+1)$-hypergraph which satisfies $$ R(\overline{x}) \Longleftrightarrow R'(\overline{x},0)$$ for all $k$-tuples $\overline{x}$. Then $\graph'$ is the generic even $(k+1)$-hypergraph and is a transitive one-point extension of $\graph$.
\end{proposition}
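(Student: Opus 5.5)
Our approach is to work with $\mathbb{F}_2$-valued functions. We identify a $k$-hypergraph on a set $X$ with its indicator function $f$ on $k$-subsets, and define the coboundary $\delta f$ on $(k+1)$-subsets by $\delta f(S)=\sum_{T\subset S,|T|=k} f(T) \pmod 2$. With this notation, a $(k+1)$-hypergraph $h$ is even exactly when $\delta h=0$.

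\textbf{Existence and uniqueness of $\graph'$.} We show that an even $h$ on $G'$ is determined by its link at $0$, i.e.\ by $f(T)=h(T\cup\set{0})$. Take a $(k+1)$-subset $S\subseteq G$ and apply evenness to $S\cup\set{0}$. This forces
$$h(S)=\sum_{T\subset S,|T|=k} f(T)=\delta f(S).$$
Conversely, set $h(S\cup\set{0})=f(S)$ and $h(S)=\delta f(S)$ for $S\subseteq G$. This $h$ is even because of the identity $\delta\delta=0$, checked on $(k+2)$-subsets by counting: each $k$-subset lies in exactly two of the relevant $(k+1)$-subsets. One runs this check separately for sets containing $0$ and sets not containing $0$. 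Symmetry and irreflexivity of $h$ are immediate, so $\graph'$ exists and is unique.

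\textbf{Switching.} Evenness is a condition on the $\mathbb{F}_2$-linear map $\delta$. For any even $h$ and any point $a$, the link $f_a(T)=h(T\cup\set{a})$ on $G'\setminus\set{a}$ reconstructs $h$ by the formula above. Two $k$-hypergraphs $f,g$ on the same set yield the same $\delta$ iff $f+g$ is a $k$-cocycle. We only need the following concrete consequence: every finite even $(k+1)$-hypergraph $B$ and every $b\in B$ arise as $B=$ the extension of its link $\mathrm{lk}_b(B)$.

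\textbf{Genericity.} We show $\graph'$ is the Fraïssé limit of the class of finite even $(k+1)$-hypergraphs. Amalgamation for this class follows from the reconstruction map, since it makes finite even $(k+1)$-hypergraphs with a distinguished point equivalent to finite $k$-hypergraphs. For the extension property, let $A\subseteq B$ be finite even $(k+1)$-hypergraphs with $B=A\cup\set{b}$, and let $e:A\to\graph'$ be an embedding. We must realise $B$ over $e(A)$, and we reduce to the case $0\in e(A)$. If $0\notin e(A)$, enlarge $A$ by a point mapped to $0$ and amalgamate so that $B$ still sits over it; this uses amalgamation in the even class, which we have just verified.

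Now fix $a_0=e\inv(0)$. The embedding $e$ sends $\mathrm{lk}_{a_0}(A)$ into $\graph$, and $\mathrm{lk}_{a_0}(B)$ is a one-point extension of it. Since $\graph$ is the generic $k$-hypergraph, $\mathrm{lk}_{a_0}(B)$ is realised in $\graph$ over $e(\mathrm{lk}_{a_0}(A))$. By uniqueness of the even reconstruction, this realises $B$ over $e(A)$ in $\graph'$. Since $\graph'$ is countable, it is therefore the generic even $(k+1)$-hypergraph.

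\textbf{One-point extension and transitivity.} Any $\sigma\in\Aut(\graph')_0$ preserves the link at $0$, which is $\graph$, so $\sigma|_G\in\Aut(\graph)$. Conversely, any $\tau\in\Aut(\graph)$ extends by $0\mapsto0$ to an automorphism of $\graph'$, because $h$ on subsets of $G$ is $\delta f$ and $\tau$ preserves $\delta f$. Hence $(\Aut(\graph')_0,G)=(\Aut(\graph),G)$. Transitivity comes from ultrahomogeneity of the Fraïssé limit via the homogeneity test: all one-point even hypergraphs are isomorphic.

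\textbf{Main obstacle.} We expect the main obstacle to be the extension-property step when $0\notin e(A)$. If the amalgamation route is awkward, we would instead use transitivity-free reasoning: pick any $c\in e(A)$ and pass to the link at $c$. This requires knowing that the link of $\graph'$ at an arbitrary vertex $c$ is also a generic $k$-hypergraph. We would prove that directly, by computing $\mathrm{lk}_c$ as $f$ switched by the $(k-1)$-link of $c$ and checking the extension property.
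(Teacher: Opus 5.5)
Your proof is correct, and most of it matches the paper. Your $\delta\delta=0$ count is the paper's induction on the number of $k$-hyperedges, since each $k$-set lies in exactly two $(k+1)$-subsets of a $(k+2)$-set. The embedding of an arbitrary finite even hypergraph through its link at a point sent to $0$ is also the paper's, as is the identification of $\Aut(\graph')_0$ with $\Aut(\graph)$.

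Where you genuinely differ is the back-and-forth step.

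\textbf{The paper's route.} It moves away from $0$: using a dummy point, it arranges that the finite sets involved avoid $0$, views them inside $\graph$, and appeals to ultrahomogeneity of $\graph$.

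\textbf{Your route.} You move towards $0$. You adjoin the actual point $0$ to $e(A)$ and amalgamate $B$ over it in the even class. That amalgamation comes from taking links at a point of $A$ and using free amalgamation of $k$-hypergraphs. With $a_0=e\inv(0)$ as anchor, $\graph'$-embeddings sending $a_0\mapsto 0$ are exactly $\graph$-embeddings of the links at $a_0$. So the extension property of $\graph$ transfers verbatim.

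\textbf{What your route buys.} It is the more robust of the two. For subsets of $G$, the $\graph$-structure determines the $\graph'$-structure, but not conversely. For $k=2$, a triangle with one edge and a triangle with three edges are both $3$-hyperedges of $\graph'$. So a partial isomorphism of $\graph'$ between subsets of $G$ need not be one of $\graph$. The paper's direct appeal to ultrahomogeneity of $\graph$ therefore needs an extra switching argument, essentially the fallback you sketch, to be complete. Its reduction to sets avoiding $0$ is also only sketched.

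Your version avoids both issues. It never relocates sets by symmetries of $\graph'$; it only enlarges $A$ by the actual point $0$. The cost is the small amalgamation lemma for even hypergraphs, which you get almost for free from the link and reconstruction correspondence.
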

\begin{proof}
    First, we must check that $\graph'$ is well-defined. Let $\overline{a} = (a_0, \dots, a_k) \in G^{k+1}$ be made of distinct elements. Notice the values of $R'(\overline{a}',0)$ for $\overline{a}'$ $k$-subtuple of $\overline{a}$ are determined by $R(\overline{a})$, so, for $\graph'$ to be an even $(k+1)$-hypergraph, the value of $R'(\overline{a})$ must be chosen so that there are an even number of hyperedges on $\overline{a} \cup\set{0}$. In other words, for $\graph'$ to be an even $(k+1)$-hypergraph it is necessary that $R'(\overline{a})$ holds if and only if $\graph$ has an odd number of $k$-hyperedges in $\overline{a}$.

    This fully characterizes the relation $R'$. The question of whether any $(k+2)$-set from $\graph '$ has an even number of $(k+1)$-hyperedges, then, reduces to the claim that in any $(k+2)$-set from $\graph$ (that is, in any $k$-hypergraph with $k+2$ vertices), there are an even number of $(k+1)$-sets of vertices which feature an odd number of $k$-hyperedges between them. This can be seen to be true by induction on the number of $k$-hyperedges, noticing that the empty graph on $k+2$ vertices satisfies the condition, and that adding a $k$-hyperedge to any graph on $k+2$ vertices adds a $k$-hyperedge to exactly two of its $(k+1)$-element subsets; namely, if the graph has vertices $\set{a_1, \dots, a_{k+2}}$ and $E = \set{a_1, \dots, a_{k}}$ is a $k$-hyperedge, it is contained in precisely the $(k+1)$-element subsets $E\cup\set{a_{k+1}}$ $E\cup\set{a_{k+2}}$. Thus by induction, the parity of the number of $(k+1)$-element subsets with an odd number of $k$-hyperedges is always even in a graph with $k+2$ vertices, which yields that $(G', R')$ is a an even $(k+1)$-hypergraph.

    It remains to check that this is a transitive extension. The fact that it is an extension is easy: Let $\phi$ be a permutation on $G$, then $\phi$ is an automorphism of $\graph$ if and only if it preserves the relation $R$, which by construction occurs if and only if $\phi\cup\{0\mapsto 0\}$ preserves $R'$ on $(k+1)$-tuples of the form $(\overline{a}, 0)$, and since whether any $(k+1)$-tuple is a hyperedge is determined by how many $k$-tuples from it form a hyperedge with 0, this is equivalent to it preserving $R'$ on all tuples, which is itself equivalent to being in $\Aut(\graph')_0$. This proves it is an extension.

    Next, we show $\mathcal{G}'$ is ultrahomogeneous, and therefore transitive by the Homogeneity test. First we show that if $H$ is a finite even $(k+1)$-hypergraph, then $H$ embeds into $G'$. To see this, pick an arbitrary $h_0 \in H$ and define a $k$-hypergraph relation on $H \setminus \set{h_0}$ where a $k$-tuple $\overline{h}$ is a $k$-hyperedge if $\overline{h}h_0$ is a $(k+1)$-hyperedge in $H$. Then $H \setminus \set{h_0}$ embeds into $G$ by hypothesis, and now appending $\set{h_0 \mapsto 0}$ to this embedding yields an embedding of $H$ into $G'$. Thus the age of $G'$ includes all finite even $(k+1)$-hypergraphs. 
    
    Now, note that while the previous embedding always embeds into a subset of $\graph'$ which includes 0, we can add an additional condition that the image of the embedding does not include zero, by including an additional ``dummy'' point in $H$ (and any hyperedges necessary to ensure evenness), picking it as $h_0$ and then removing it at the end. Now, if $A$ and $B$ are isomorphic finite substructures of $\graph'$ and $a\not \in A$, we can find isomorphic copies of $A \cup \set{a}$ and $B$ in $\graph'$ which do not include 0. That is, for the sake of back-and-forth we may assume without loss that neither subset contains 0, but then we may view them as subsets of $\graph$, and their (quantifier-free) type in $\graph$ determines their (quantifier-free) type in $\graph'$. By the ultrahomogeneity of $\graph$ we can find a $b$ with the same type over $B$ as the type of $a$ over $A$, and augment the isomorphism between $A$ and $B$ with $a \mapsto b$, yielding a partial isomorphism of $\graph$ and therefore of $\graph'$. By continuing this process in a back-and-forth fashion we may extend to a full automorphism of $\graph'$ and conclude that $\graph'$ is ultrahomogeneous, and therefore it is the generic even $(k+1)$-hypergraph and a transitive extension of $\graph$.
\end{proof}

\subsection{edge-colored (hyper)graphs}\hfill\\

We will now extend this result to graphs in which the edges have been colored. We denote the diagonal of a cartesian power $G^k$ as $\Delta^k_G := \set{(x_1, \dots, x_k)\in G^k: |\set{x_1, \dots, x_k}|<k } $).

\begin{definition}
    An edge-colored $k$-hypergraph (with $n$ colors) is a structure $(G, R_1, \dots, R_n)$ where each $R_i$ is a $k$-hyperedge relation, and $\{R_1, \dots, R_n)$ is a partition of $G^k \setminus \Delta^k_G$.
\end{definition}

Note that we disallow edges with no color, so a usual $k$-hypergraph is seen as an edge-colored $k$-hypergraph with two colors. A $k$-hypergraph with one color is definitionally equivalent to a pure set.

The main theorem we wish to prove in this section is the following:

\maintheoremone*

Another way of stating this result is through the observation that an edge-colored generic $k$-hypergraph with $2^n$ colors is essentially the same as the merge of $n$ uncolored $k$-hypergraphs, so the ``if'' direction falls out easily:

\begin{corollary}[of Proposition \ref{Extension of Rado Graph}]\label{if direction of graph theorem}
    Let $n$ be a natural number and $\graph$ be the generic edge-colored $k$-hypergraph with $2^n$ colors, then $\graph$ has a transitive one-point extension $\graph'$ which is the merge of $n$ generic even $(k+1)$-hypergraphs.
\end{corollary}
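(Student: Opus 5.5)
The plan is to realize $\graph$ as a merge of $n$ generic $k$-hypergraphs and then apply Lemma \ref{merge lemma} together with Proposition \ref{Extension of Rado Graph}.

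First we fix a bijection between the $2^n$ colors and $\{0,1\}^n$. For each $j \le n$ we define a $k$-hypergraph relation $E_j$ on $G$: a $k$-tuple of distinct elements is in $E_j$ if and only if the $j$-th bit of its color is $1$. Each $E_j$ is irreflexive and symmetric because each $R_i$ is. The structure $(G, E_1, \dots, E_n)$ is definitionally equivalent to $\graph$, since each $R_i$ is a Boolean combination of the $E_j$ and conversely. So it has the same automorphism group, and it is ultrahomogeneous with age the class of all finite structures carrying $n$ arbitrary $k$-hypergraph relations.

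That class is exactly the merge of $n$ copies of the class of finite $k$-hypergraphs, with disjoint languages $\{E_j\}$. The class of finite $k$-hypergraphs has dAP, since free amalgamation adds no hyperedges across the two sides; equivalently, the generic $k$-hypergraph has trivial algebraic closure. By the merge fact, iterated $n-1$ times, $\graph$ is, up to interdefinability, the merge of $n$ generic $k$-hypergraphs $\graph_1, \dots, \graph_n$ on the common base set $G$. A small point to record here is that iterated merges are still classes with dAP, and that a merge of merges equals the $n$-fold merge. Both follow directly from the definition, since the condition is just $A|_{\lang_j} \in \mathcal{F}_j$ for each $j$.

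By Proposition \ref{Extension of Rado Graph}, each $\graph_j$ has a transitive extension $\graph_j'$ that is the generic even $(k+1)$-hypergraph. Applying Lemma \ref{merge lemma} inductively gives a transitive extension of $\graph$ isomorphic to the merge of $\graph_1', \dots, \graph_n'$. The inductive step needs two facts: each intermediate merge has a transitive extension, which is the previous step, and it has trivial algebraic closure, which follows from dAP of the merge. The hypotheses of Lemma \ref{merge lemma} also require trivial algebraic closure for the extensions, and that lemma's proof already asserts it.

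The only step needing care is this inductive use of Lemma \ref{merge lemma}, in particular that the merge of the first $m$ extensions equals the transitive extension of the merge of the first $m$ structures. This reduces to the bookkeeping already done in the lemma. The rest is routine.
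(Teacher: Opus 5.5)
Your proposal is correct and follows the paper's proof: you index the $2^n$ colors by $\{0,1\}^n$, use the bit-relations to exhibit $\graph$ as the merge of $n$ generic $k$-hypergraphs, and then combine Proposition \ref{Extension of Rado Graph} with Lemma \ref{merge lemma}. You also spell out details the paper leaves as ``easy to check'', namely dAP via free amalgamation and the iteration of the two-structure merge lemma.
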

\begin{proof}
    If we index the colors of a generic edge-colored $k$-hypergraph with $2^n$ colors with the binary sequences of length $n$, and define $k$-hyperedge $R_1, \dots, R_n$ relations by $R_i(\overline{x})$ if and only if the color of $\overline{x}$ has a 1 in the $i$'th entry, then it is easy to check that $\graph$ is the merge of the generic $k$-hypergraphs $(G, R_i)$, and therefore has the transitive extension described in the statement.
\end{proof}

For the ``only if'' direction, we begin with the following strengthening of Lemma \ref{arity growth lemma}
\begin{lemma}\label{Conditions for generic Graph extensions}
    Let $\graph = (G, R_1, \dots, R_n)$ be an ultrahomogeneous edge-colored $k$-hypergraph in $n$ colors. If $\graph$ has a transitive one-point extension $\graph'$ in canonical form, then $\graph'$ is an edge-colored $(k+1)$-hypergraph $(G, R'_1, \dots, R'_n)$ with $n$ colors and no additional structure.
\end{lemma}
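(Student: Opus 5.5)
By Lemma \ref{arity growth lemma}, $\graph'$ in canonical form has language $R'_1,\dots,R'_n$, each $(k+1)$-ary, with $R_i(\overline{x}) \Leftrightarrow R'_i(\overline{x},0)$, and with no other relations. It therefore suffices to show three things: each $R'_i$ is irreflexive; each $R'_i$ is symmetric under all permutations of $k+1$ coordinates; and the $R'_i$ partition $(G')^{k+1}\setminus\Delta^{k+1}_{G'}$.

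Recall from the proof of Lemma \ref{arity growth lemma} that the $R'_i$ are orbits of $\Aut(\graph')$ on $(k+1)$-tuples, namely those orbits corresponding to the $(k+1)$-tuples ending in $0$ whose first $k$ entries lie in $R_i$. Since $\graph'$ is transitive, every $(k+1)$-tuple can be moved by an automorphism to one ending in $0$. Hence the $(k+1)$-orbits are exactly those of tuples $(\overline{x},0)$ with $\overline{x}\in G^k$.

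\emph{Partition.} A tuple $(\overline{x},0)$ has distinct entries exactly when $\overline{x}\notin\Delta^k_G$. The orbits of $\Aut(\graph')_0=\Aut(\graph)$ on such $\overline{x}$ are the orbits of $\Aut(\graph)$ on non-diagonal $k$-tuples. By ultrahomogeneity of $\graph$ and the fact that the only structure is the edge coloring, these orbits are exactly $R_1,\dots,R_n$. Here I also use that $\graph$ is $(k-1)$-transitive, which follows from the Homogeneity Test since there are no relations of arity below $k$. So the non-diagonal $(k+1)$-orbits of $\Aut(\graph')$ are exactly $R'_1,\dots,R'_n$, and they partition the non-diagonal tuples. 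The diagonal orbits carry no information: by Lemma \ref{arity growth lemma} they are determined by lower-arity orbits, and $\graph'$ is $k$-transitive by Fact \ref{transitivity degree jump fact}. Irreflexivity then amounts to declaring the $R'_i$ to be the orbits of tuples with distinct entries.

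\emph{Symmetry.} This is the main step. Fix $\pi\in S_{k+1}$ and $i$. I need that $R'_i$ is closed under $\pi$, i.e.\ that $\pi$ maps each orbit $R'_i$ onto itself. First, $\pi$ permutes the set of orbits $\{R'_1,\dots,R'_n\}$, since applying a coordinate permutation to an orbit of $\Aut(\graph')$ again gives an orbit. If $\pi$ fixes coordinate $k+1$, then the symmetry of $R_i$ gives $\pi R'_i=R'_i$. It remains to handle a transposition $\tau=(j\ k{+}1)$, because $S_{k+1}$ is generated by $S_k$ together with such a transposition.

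Take $\overline{x}=(x_1,\dots,x_k)\in R_i$. Then $\tau(\overline{x},0)$ is $(\overline{x}[j\mapsto 0], x_j)$. Choose $g\in\Aut(\graph')$ with $g(x_j)=0$, which exists by transitivity. The point is to show that the color of $g$ applied to the first $k$ entries is again $i$. Direct computation is not available, so I would argue via symmetry of the original $R_i$ under $S_k$. The transposition $(j\ k{+}1)$ is conjugate, by an element of $S_k$, to a fixed transposition $(k\ k{+}1)$. I then use $k\ge2$ to find another coordinate $l\ne j$ that stays among the first $k$ coordinates. Writing $\tau$ as $(j\ l)(l\ k{+}1)(j\ l)$ does not close the argument by itself.

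So I would try a counting argument instead. The map $\sigma_\tau:i\mapsto$ (color of $\tau R'_i$) is a permutation of $\{1,\dots,n\}$, so $S_{k+1}$ acts on the colors with $S_k$ acting trivially. Since $S_k$ is a maximal-ish subgroup and the kernel of the action is a normal subgroup of $S_{k+1}$ containing $S_k$, the kernel is all of $S_{k+1}$. Indeed, the normal closure of $S_k$ in $S_{k+1}$ is $S_{k+1}$ for $k\ge2$, because it contains a transposition. Hence every $\pi$ fixes every color, and the symmetry follows. This group-theoretic step is the cleanest route. The thing I most need to verify is that the induced map on colors really is a group action, which holds because it is the action of $S_{k+1}$ on the set of non-diagonal $(k+1)$-orbits. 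It is also exactly where $k\ge2$ is used, matching the paper's remark that $k=1$ is excluded.
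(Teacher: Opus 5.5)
Your proof is correct and is essentially the paper's argument. Both use three facts: the symmetry of each $R_i$ makes the permutations fixing the $0$-coordinate harmless; transitivity of $\mathrm{Aut}$ of the extension moves this from $0$ to an arbitrary point; and the point stabilizers generate $S_{k+1}$ once $k\ge 2$. Your version, where the kernel of the $S_{k+1}$-action on the orbits $R'_1,\dots,R'_n$ is normal and contains $S_k$, is simply a cleaner formalization of the paper's ``this property is part of the type of $0$'' step, and the abandoned attempt via $(j\ l)(l\ k{+}1)(j\ l)$ can be deleted.
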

\begin{proof}
    The only thing we need to check is that the relations $R_i'$ are necessarily $(k+1)$-hypergraph relations, that is to say that they are symmetric under all permutations.
    
    Notice that any $(k+1)$-tuple in $\graph'$ is locally isomorphic to any permutation of itself. This is because any permutation of $k$ elements on $\graph$ is a local isomorphism, so any permutation of $k$ elements which additionally fixes 0 is a local isomorphism on $\graph'$, but this property is part of the type of $0$, which has to be the same as the type of any other element, so in fact for any $k+1$ elements, any permutation which fixes one of them is a local isomorphism, such maps generate all permutations on $k+1$ elements (note that this only follows when $k>1)$, so all $R_i'$ are symmetric under all permutations, as desired.
\end{proof}

The question of extending an edge-colored $k$-hypergraph, then, boils down to finding appropriate $(k+1)$-hypergraphs $R_i'$, or proving no such hypergraphs exist. This lemma forces $(k+1)$-tuples of the form $\{\overline{a}, 0\}$ to have the same ``$(k+1)$-color'' as the ``$k$-color'' of $\overline{a}$, but says nothing about $(k+1)$-tuples which do not include 0. Thus, the only real question is whether it is possible to set the colors of $(k+1)$-tuples not including 0 in a way that allows for $\graph'$ to be a transitive extension. We shall show it is not possible when $n$ is not a power of 2. We begin with the following result, which pertains only to the $k = 2$ case.

Henceforth, $M_n^m$ denotes the collection of size $m$ multisets with elements taken from $\set{1, \dots, n}$.

\begin{lemma}\label{existence of palettes}
    Let $\graph$ be the generic edge-colored graph with $n$ colors. If $\graph$ has a transitive one-point extension $\graph'$, then there exists a set $A \subseteq M_n^4$ of 4-multisets such that
    \begin{enumerate}
        \item Every 3-multiset in $M_n^3$ is contained in a unique multiset in $A$.

        \item All multisets of the form $\set{i, i, j, j}$ are in $A$ (Including when $i = j$).

        \item If $\set{i, i', j, j'} \in A$ and $\set{i, i', k, k'} \in A$, then $\set{j, j', k, k'}, \in A$
    \end{enumerate}
\end{lemma}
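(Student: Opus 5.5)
Let $\graph'$ be the transitive extension, which by Lemma \ref{Conditions for generic Graph extensions} we may take in canonical form: an edge-colored $3$-hypergraph $(G', R'_1,\dots,R'_n)$ with $R_i(x,y) \Leftrightarrow R'_i(x,y,0)$. By Lemma \ref{Extensions of homogeneous structures} it is ultrahomogeneous, and by Fact \ref{transitivity degree jump fact} it is $2$-transitive.

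The plan is to read off a \enquote{palette} from $4$-point configurations. For any $4$ distinct points $a,b,c,d \in G'$, consider the multiset of the four $3$-colors of $\{a,b,c\},\{a,b,d\},\{a,c,d\},\{b,c,d\}$. Define $A$ as the set of all multisets arising this way. The key observation is that if $d = 0$, then the four triangles $\{a,b,0\},\{a,c,0\},\{b,c,0\}$ carry the colors of the edges $ab, ac, bc$ in $\graph$. The remaining triangle $\{a,b,c\}$ has a color $f(i,j,l)$ that I claim depends only on the multiset $\{i,j,l\}$ of edge colors. This is because any two triangles in $\graph$ with the same edge-color multiset are isomorphic, so by ultrahomogeneity of $\graph$ some automorphism of $\graph$ maps one to the other. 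That automorphism, extended by $0\mapsto 0$, lies in $\Aut(\graph')$, so it preserves the $3$-color of $\{a,b,c\}$. Moreover, by transitivity of $\graph'$, every $4$-set is in the orbit of one containing $0$. Hence
$$A=\bigl\{\{i,j,l,f(i,j,l)\} : \{i,j,l\}\in M_n^3\bigr\}.$$
All color triples occur because $\graph$ is generic.

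For (1), existence is immediate from the definition. For uniqueness, I would show that if $\{i,j,l\}\subseteq \{i,j,l,m\}\in A$, then $m=f(i,j,l)$. The idea is to pick a $4$-set realizing $\{i,j,l,m\}$ and move the vertex opposite the face colored $m$ to $0$ by transitivity. Then the other three faces are the edges at... here some care is needed: the multiset $\{i,j,l,m\}$ arises as $\{p,q,r,f(p,q,r)\}$. I must show that removing any one element $x$ leaves a triple whose $f$-value is $x$. To do this, I move a different vertex to $0$ (transitivity, in fact $2$-transitivity) and reinterpret: the faces through the new $0$ are edge colors and the opposite face is $f$ of them. This shows that for every vertex $v$, the face opposite $v$ is $f$ of the other three. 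That is exactly uniqueness, namely $f(\{i,j,l,m\}\setminus\{x\})=x$ for each element $x$.

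For (2), take a triangle $a,b,c$ in $\graph$ with $ab$ colored $i$ and $ac,bc$ colored $j$. The transposition swapping $a$ and $0$ is not an automorphism in general, so instead I use the symmetry of the structure directly. I would show $f(i,j,j)=i$ by exhibiting an automorphism of $\graph'$ sending $0\mapsto c$. The cleanest route: consider points $a,b,0$ and a point $c$, and compute the face opposite $c$ versus the face opposite $0$. I expect this to be the main obstacle. The intended argument is likely to use a $5$-point configuration and the rule from (1) twice, giving $f$ consistency relations. Alternatively, it uses the $2$-transitivity of $\graph'$ to swap $0$ with $c$ fixing $a$... but $2$-transitivity alone doesn't give that. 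So I would attempt the $5$-point argument: take $0,a,b,c,d$ and express the colors of triangles not containing $0$ via $f$. Then apply the palette rule to the $4$-set $\{a,b,c,d\}$ with $a$ moved to $0$, deriving $f(f(x,y,z),\dots)$ identities. Specializing the colors yields $f(i,j,j)=i$ and then, with $i=j$, $f(i,i,i)=i$.

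For (3), which is also a $5$-point statement, I would realize $\{i,i',j,j'\}$ and $\{i,i',k,k'\}$ as two $4$-sets sharing a common triangle pair of faces. Concretely, take $0,a,b$ with the triangles through the edge... I choose points $0,a,b,c,d$ where $\{0,a,b,c\}$ realizes the first multiset and $\{0,a,b,d\}$ the second, with $i,i'$ being the shared faces $\{0,a,b\}$ and $\{a,b,c\}/\{a,b,d\}$ suitably. Genericity of $\graph$ lets me choose $c,d$ with prescribed edge colors. Then the evenness-like relation $f$ on the $4$-set $\{0,a,c,d\}$ or $\{a,b,c,d\}$, combined with the rule from (1) applied to each $4$-subset of the $5$-set, forces $\{j,j',k,k'\}\in A$. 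Checking that the bookkeeping of which face gets which color is consistent is the routine but delicate part.
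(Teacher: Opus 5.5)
Your construction of $A$ and your argument for (1) are fine and agree with the paper. Moving any vertex of a $4$-set to $0$ by transitivity shows that the face opposite it has color $f$ of the other three faces, and that is exactly uniqueness.

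The genuine gap is (2). You flag it as the main obstacle and then leave it as a hope (``specializing the colors yields $f(i,j,j)=i$''), without naming the configuration. Naively specializing the six edge colors to $i$ and $j$ only produces constraints among the unknown values $f(\{i,i,i\}),f(\{i,i,j\}),f(\{i,j,j\}),f(\{j,j,j\})$. The missing idea is to feed an output of $f$ back in as an edge color:
\begin{itemize}
\item Put $k=f(\{i,i,j\})$, so $\{i,i,j,k\}\in A$.
\item By genericity of $\graph$, find $a,b,c,d$ with $ab=ac=cd=i$, $ad=bc=k$, $bd=j$.
\item Every triangle here has edge-color multiset contained in $\{i,i,j,k\}$. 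So by (1) its hyperedge color in $\graph'$ is the complementary element: $\{a,b,c\}$ and $\{a,c,d\}$ get $j$, while $\{a,b,d\}$ and $\{b,c,d\}$ get $i$.
\item The $4$-set $\{a,b,c,d\}$ then witnesses $\{i,i,j,j\}\in A$.
\end{itemize}
No case distinction is needed for $i=j$ or $k\in\{i,j\}$. Your detours through an automorphism sending $0\mapsto c$, or through $2$-transitivity, lead nowhere and can be dropped.

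For (3), your five-point setup is workable: $ab=i$, $ac=j$, $bc=j'$, $ad=k$, $bd=k'$, so that $\{a,b,c\}$ and $\{a,b,d\}$ both receive color $i'$ by (1). But you never fix the color $m$ of the remaining edge $cd$, and you never invoke (2), and without both the bookkeeping does not close. With $m$ arbitrary, the $4$-set $\{a,b,c,d\}$ only tells you, via (1) and (2), that $f(\{j,k,m\})=f(\{j',k',m\})$.

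Choosing $m=j'$ finishes it:
\begin{itemize}
\item By (2), the triangle $\{b,c,d\}$ has color $f(\{j',j',k'\})=k'$.
\item So $\{a,b,c,d\}$ has face colors $\{i',i',f(\{j,j',k\}),k'\}$.
\item By (1) and (2), the only element of $A$ containing $\{i',i',k'\}$ is $\{i',i',k',k'\}$. Hence $f(\{j,j',k\})=k'$, i.e.\ $\{j,j',k,k'\}\in A$.
\end{itemize}
The paper instead uses the four-point configuration $ab=ac=cd=i$, $ad=j$, $bc=k$, $bd=i'$, which likewise depends on (2).

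As written, then, your proposal is missing the actual content of the lemma: the configuration proving (2), and the specific choices in (3) that rest on it.
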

\begin{proof}
    We shall construct the set $A$ starting from a transitive extension $\graph'$ of $\graph$.

    By Lemma \ref{Conditions for generic Graph extensions}, $(G')^3\setminus \Delta_{G '}^3$ can be colored as $n$ disjoint hypergraphs, $R_1', \dots, R_n'$, which correspond to the edge colors $R_1, \dots, R_n$ of $\graph$ by the rule
    $$R_i(x,y) \Longleftrightarrow R_i'(x,y, 0).$$
    Now, given a triples of elements $(a,b,c)$ in ${G}$, their setwise orbit in $\graph$ is completely determined by the distribution of colors of the triangle they form. i.e. whether $\set{a,b,c}$ maps to $\set{a', b', c'}$ as a set depends only on whether the number of edges of each color is the same for both of them. In $\graph'$ this same setwise orbit is instead determined by which color the hyperedge between them is. Thus, the latter must be determined from the former. That is, there exists a function
    $$f : M_n^3 \rightarrow \set{1, \dots,n}$$
    which maps the multiset of colors on a triangle to the color of the corresponding hyperedge. Another way to think of the input of $f$ is not as the colors of the triangle formed by a triple $\{a,b,c\}$ in $\graph$, but rather as the colors of the hyperedges $\{a, b, 0\}$, $\{a, c, 0\}$ and $\{b, c, 0\}$ in $\graph'$. That is, if given the color of the hyperedge relations on triples from $\set{0, a,b,c}$ which include $0$, the color of $\{a,b,c\}$ is determined. But this can be expressed as a sentence in the type of $0$, so by transitivity this must be true if we replace $0$ for any other element. That is, given four arbitrary points such that colors of three of their hyperedges form a multiset $\set{i, j, k}$, the fourth hyperedge must have color $f(\set{i, j, k})$. Define now an additional function $\hat{f}: M_n^3 \rightarrow M_n^4$ by
    $$\hat{f}(\set{i,j,k}) = \set{i, j, k, {f}(\set{i,j,k})}.$$
    We define $A$ as the image of this function. Or equivalently as the set of multisets of colors which arise as the hyperedge colors of the four sub-triples of any 4-tuple in $\graph'$. We prove that $A$ satisfies the properties in the statement:
    \begin{enumerate}
        \item Any multiset $t \in M_n^3$ is, by construction, contained in $\hat f(t) \in A$. If there was another such multiset, there would would be two 3-tuples $(a,b,c)$ and $(a', b', c')$, such that the multiset of colors of the triangle they form in $\graph$ is $t$, but the hyperedges between them in $\graph'$ have different colors, meaning they are in the same $\graph$-orbit but not $\graph'$ orbit, which is a contradiction.

        \item Suppose $\{i, i, j, j\} \not \in A$ , then $f(\{i, i, j\})$ is some $k \neq j$. That is $\set{i,i,j,k} \in A$. Consider the following subgraph of $\graph$ whose edges are colored with colors $i, j, k$:

        \begin{center}
        \begin{tikzpicture}

        \node[circle, draw] (a) at (0: 0) {$a$};
        \node[circle, draw] (b) at (0: 2) {$b$};
        \node[circle, draw] (c) at (120: 2) {$c$};
        \node[circle, draw] (d) at (240: 2) {$d$};

        \draw (a) -- node[midway, above, xshift=-10pt] {$i$} (b);
        \draw (a) -- node[midway, right] {$i$} (c);
        \draw (a) -- node[midway, right] {$k$} (d);
        \draw (b) -- node[midway, above] {$k$} (c);
        \draw (b) -- node[midway, below] {$j$} (d);
        \draw (c) -- node[midway, left] {$i$} (d);
            
        \end{tikzpicture}
        \end{center}

        Notice that for every triangle, its multiset of colors is a sub-multiset of $\set{i,i,j,k}$, so $\set{a,b,d}$ and $\set{b,c, d}$ have color $i$ while $\set{a,b,c}$ and $\set{a,c, d}$ have color $j$, therefore the triples from $\{a, b, c, d\}$ have colors $\{i, i, j, j\}$, and so this multiset is in $A$, contradicting the assumption.

        \item Suppose $\set{i, i', j, j'}$ and $\set{i, i', k, k'}$ are both in $A$. Consider the following subgraph of $\graph$ whose edges are colored with colors $i, i', j, k$

        \begin{center}
        \begin{tikzpicture}

        \node[circle, draw] (a) at (0: 0) {$a$};
        \node[circle, draw] (b) at (0: 2) {$b$};
        \node[circle, draw] (c) at (120: 2) {$c$};
        \node[circle, draw] (d) at (240: 2) {$d$};

        \draw (a) -- node[midway, above, xshift=-10pt] {$i$} (b);
        \draw (a) -- node[midway, right] {$i$} (c);
        \draw (a) -- node[midway, right] {$j$} (d);
        \draw (b) -- node[midway, above] {$k$} (c);
        \draw (b) -- node[midway, below] {$i'$} (d);
        \draw (c) -- node[midway, left] {$i$} (d);
            
        \end{tikzpicture}
        \end{center}

        Notice the following:
        \begin{itemize}
            \item The edges in $\{a,b,c\}$ are colored $\{i, i , k\} \subseteq \{i, i, k, k\} \in A$ (by (2)), so $\{a,b,c\}$ has color $k$.
            \item The edges in $\{a,b,d\}$ are colored $\{i, i', j\} \subseteq \{i, i', j, j'\} \in A$, so $\{a,b,c\}$ has color $j'$.
            \item The edges in $\{a,c,d\}$ are colored $\{i, i , j\} \subseteq \{i, i, j, j\} \in A$ (by (2)), so $\{a,b,c\}$ has color $j$.
            \item The edges in $\{b,c,d\}$ are colored $\{i, i' , k\} \subseteq \{i, i', k, k'\} \in A$, so $\{a,b,c\}$ has color $k'$.
        \end{itemize}
        Thus, the triples from $\{a, b, c, d\}$ have colors $\{j, j', k, k'\}$, and so this multiset is in $A$, as desired.
    \end{enumerate}
    Thus, $A = \textrm{Im}(\hat f)$ satisfies the conditions in the statement.
\end{proof}

We call a subset of $M_n^4$ satisfying these conditions a \emph{palette in $n$ colors}. For intuition, if $n$ is a power of 2, then we may associate to each color an element of $(\Z/2\Z)^{\log_2(n)}$. Now if we define $A$ as the 4-multisets of colors for which the sum of their colors is the identity, then it is easy to check that $A$ is a palette in $n$ colors. This is essentially the palette that the previous proof would construct if starting from the known transitive extensions of $k$-hypergraphs with $2^m$ colors. We shall show this is essentially the only way to construct palettes.

So far we have established that transitive extensions of edge-colored generic graphs with $n$ colors imply the existence of palettes in $n$ colors, but in order to obtain the full result we will require the same result for $k$-hypergraphs rather than just mere graphs. The proof of this will be broadly similar to the previous one, but with additional complexity which merits separating the results.

\begin{lemma}\label{existence of k-palettes}
    Let $k\geq 2$ and let $\graph$ be the generic edge-colored $k$-hypergraph with $n$ colors and $\graph'$ be a transitive one-point extension of $\graph$, then there exists a palette in $n$ colors.
\end{lemma}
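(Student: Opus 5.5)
The plan is to reduce to the graph case of Lemma \ref{existence of palettes} by a ``link'' construction. We fix a set $P=\set{p_1,\dots,p_{k-2}}$ of $k-2$ distinct points of $G$ and treat it as a parameter. For $x,y \in G\setminus P$, the color of the $k$-hyperedge $P\cup\set{x,y}$ then gives an edge-colored graph on $G\setminus P$. We will only consider finite configurations in which every $k$-hyperedge \emph{not} containing all of $P$ has color $1$. Because $\graph$ is generic, every finite edge-colored graph (in $n$ colors) on points outside $P$ is realized as such a link with the extra hyperedges colored $1$: we simply choose an appropriate finite $k$-hypergraph and embed it over $P$ by ultrahomogeneity.

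\textbf{Step 1: the function $f$.} By Lemma \ref{Conditions for generic Graph extensions}, $\graph'$ is an edge-colored $(k+1)$-hypergraph $(G',R_1',\dots,R_n')$ with $R_i(\overline x)\Leftrightarrow R_i'(\overline x,0)$. By Lemma \ref{Extensions of homogeneous structures}, $\graph'$ is ultrahomogeneous. Take $a,b,c\in G\setminus P$ in the restricted configuration above. Every permutation of $\set{a,b,c}$ fixing $P$ pointwise that preserves the link colors is a local isomorphism of $\graph$, and all other hyperedges have color $1$. So, exactly as in Lemma \ref{existence of palettes}, the $\graph$-orbit of $P\cup\set{a,b,c}$, and hence the color of the $(k+1)$-hyperedge $P\cup\set{a,b,c}$ in $\graph'$, depends only on the multiset of the three link colors. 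This gives a function $f:M_n^3\to\set{1,\dots,n}$. In $\graph'$ the three link colors are the colors of $P\cup\set{x,y,0}$. Hence, for the $(k+2)$-set $P\cup\set{0,a,b,c}$, the colors of its four $(k+1)$-subsets containing $P$ form $\hat f(\set{i,j,k})=\set{i,j,k,f(\set{i,j,k})}$.

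\textbf{Step 2: transitivity, to make $0$ interchangeable with $a,b,c,d$.} We need the rule ``three of the four $(k+1)$-sets $P\cup T$, for $T$ a $3$-subset of a $4$-set $\set{w,x,y,z}$, determine the fourth via $f$'' to hold with $0$ replaced by an arbitrary point $d\in G$. For this, the auxiliary condition must be expressed inside $\graph'$. The $(k+1)$-sets of $P\cup\set{0,a,b,c}$ that do not contain $P$ all have color $1$: either they contain $0$ and are then $k$-hyperedges of $\graph$ with color $1$, or they do not contain $0$, in which case their color is determined by a configuration that we fix once and for all. So the condition is a quantifier-free condition on the $(k+2)$-set in $\graph'$ together with a marked copy of $P$. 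Since $\Aut(\graph')$ is transitive and $\graph'$ is ultrahomogeneous, any such $(k+2)$-set $P\cup\set{w,x,y,z}$ satisfying the condition can be moved by an automorphism to one of the form $P\cup\set{0,a,b,c}$. This requires mapping some point of $\set{w,x,y,z}$ to $0$, which is possible by symmetry of the relations. We can then define $A=\mathrm{Im}(\hat f)$, equivalently the multisets of colors of the four sets $P\cup T$ over all admissible configurations. Condition (1) then follows exactly as in the graph case.

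\textbf{Step 3: conditions (2) and (3).} We reuse verbatim the two $4$-vertex edge-colored graphs drawn in the proof of Lemma \ref{existence of palettes}, now realized as links over $P$ with all remaining hyperedges colored $1$ (possible by genericity). The triangle-by-triangle color computations are then identical.

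The main obstacle is Step 2. One has to make sure that the ``all non-$P$ hyperedges have color $1$'' normalization is preserved when $0$ is swapped with an ordinary point, since the $(k+1)$-sets avoiding both $0$ and part of $P$ have colors that are not directly controlled. My first attempt will be to argue that the only $(k+1)$-subsets of $P\cup\set{0,a,b,c}$ that avoid $0$ and do not contain $P$ are sets of the form $(P\setminus\set{p})\cup\set{a,b,c}$. Their colors are determined by $\graph$-orbits that are again of the normalized kind, so they are constant. Then I will fix this constant pattern as part of the quantifier-free condition on the marked $(k+2)$-set.
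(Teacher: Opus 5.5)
\textbf{Gap in Step 3.} Your route is the paper's route. Its $(k+2)$-set $E$, with the $k$-subsets meeting $\set{a,b,c,d}$ in exactly two points colored by the figure and all others colored $i_0$, is exactly your link over $P=E\setminus\set{a,b,c,d}$ normalized by the color $1$. But there is a genuine gap, and it is in Step 3, not Step 2.

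Step 2 is essentially fine. Every $(k+1)$-subset of $P\cup\set{0,a,b,c}$ that omits a point of $P$ contains $0$. The sets $(P\setminus\set{p})\cup\set{a,b,c}$ in your last paragraph have size $k$, not $k+1$.

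The trouble is that the Step 3 configuration $S=P\cup\set{a,b,c,d}$ lies inside $G$. Each of the $k-2$ sets $S\setminus\set{p}$, $p\in P$, has \emph{all} of its $k$-subsets colored $1$. Its $\graph'$-color is therefore $c_0:=f_k(\set{1^{k+1}})$, where $f_k$ sends the multiset of $\graph$-colors of the $k$-subsets of a $(k+1)$-set in $G$ to its $\graph'$-color. Nothing forces $c_0=1$. If $c_0\neq 1$, then $S$ fails your admissibility condition, since its $(k+1)$-subsets not containing $P$ have color $c_0$. You then cannot conclude that the four colors of the sets $P\cup T$ form an element of $A$.

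This actually happens. Take $k=3$, $n=2$ and the even $4$-hypergraph extension of Proposition \ref{Extension of Rado Graph}. If color $1$ is ``hyperedge'', then $c_0$ is ``non-hyperedge''. Your $A$ is then exactly the set of $4$-multisets with an odd number of $1$'s, which violates (2): neither $\set{1,1,1,1}$ nor $\set{2,2,2,2}$ lies in $A$.

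\textbf{The fix.} The missing idea is to choose the normalizing color well. Put $i_0:=f_k(\set{1^{k+1}})$.

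Take a $(k+2)$-set $S'\subseteq G$ all of whose $k$-subsets have color $1$. All its $(k+1)$-subsets have $\graph'$-color $i_0$. Send one point $s\in S'$ to $0$ by some $\sigma\in\Aut(\graph')$. Each $k$-subset $F$ of $\sigma(S'\setminus\set{s})$ forms with $0$ a $(k+1)$-set of color $i_0$, so $F$ has $\graph$-color $i_0$. Hence $\sigma(S'\setminus\set{s})$ is monochromatic of color $i_0$ in $\graph$, while its $\graph'$-color is $i_0$. Therefore $f_k(\set{i_0^{k+1}})=i_0$.

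Normalize with $i_0$ instead of $1$. The sets $S\setminus\set{p}$ then get color $i_0$, the Step 3 configurations for (2) and (3) become admissible, and the rest of your argument goes through.

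The paper's written proof slips at the same spot. It asserts that the remaining $k-2$ sets $E\setminus\set{e}$ have $k$-subsets colored $\set{i,i,j,k,i_0^{k-3}}$. In fact all their $k$-subsets are colored $i_0$, so the paper's proof also needs $i_0$ chosen in this way.
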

\begin{proof}
    Use $\graph'$ to define two functions $f_k : M_n^{k+1} \rightarrow \set{1, \dots, n}$ and $\hat f_k: M_n^{k+1} \rightarrow M_n^{k+2}$ analogously to the previous proof. That is to say for a multiset of colors $t \in M_n^{k+1}$ representing the colors of all but one of the $(k+1)$-hyperedges in a set of $k+2$ points in $\graph'$, $f_k(t)$ is the color of the remaining $(k+1)$-hyperedge, and $\hat f_k(t) := t \cup \set{f(t)}$. Define $A_k$ as the image of $\hat f_k$. 
    
    We shall now construct a palette $A$ from $A_k$. Let $i_0$ be an arbitrary color, and let $A \subseteq M_n^4$ be the set of 4-multisets $t$ such that $t \cup \set{i_0^{k-2}} \in A$, where the notation $\set{i_0^{k-2}}$ refers to the multiset of $i_0$ $k-2$ times. An alternative but equivalent definition is to let $g: M_n^3 \rightarrow \set{1, \dots, n}$ be the function which maps $\{i, j, k\}$ to  $f(\set{i, j, k, i_0^{k-2}})$, define $\hat g: M_n^{3} \rightarrow  M_n^{4}$ by $\hat g(t) = t \cup \set{g(t)}$, and take $A$ equal to the image of $\hat g$.

    We will once again check that $A$ is a palette. The proof will be broadly analogous to the previous one and will even reuse the same diagrams, although they will be given a slightly different interpretation.
\begin{enumerate}
        \item Any multiset $t \in M_n^3$ is, by construction, contained in $\hat g(t) \in A$. If there was another such multiset, there would would be two $(k+1)$-sets $E, E' \subseteq G$, such that the multiset of colors of the $k$-hyperedges between them in $\graph$ is $t$, but their colors as $(k+1)$-hyperedges in $\graph'$ are different, meaning they are in the same $\graph$-orbit but not $\graph'$ orbit, which is a contradiction.

        \item Suppose $\{i, i, j, j\} \not \in A$ , then $g(\{i, i, j\})$ is some $k \neq j$. That is $\set{i,i,j,k} \in A$. Consider the following figure, reused from the previous proof:

        \begin{center}
        \begin{tikzpicture}

        \node[circle, draw] (a) at (0: 0) {$a$};
        \node[circle, draw] (b) at (0: 2) {$b$};
        \node[circle, draw] (c) at (120: 2) {$c$};
        \node[circle, draw] (d) at (240: 2) {$d$};

        \draw (a) -- node[midway, above, xshift=-10pt] {$i$} (b);
        \draw (a) -- node[midway, right] {$i$} (c);
        \draw (a) -- node[midway, right] {$k$} (d);
        \draw (b) -- node[midway, above] {$k$} (c);
        \draw (b) -- node[midway, below] {$j$} (d);
        \draw (c) -- node[midway, left] {$i$} (d);
            
        \end{tikzpicture}
        \end{center}

        This time, this figure does not represent a graph. Rather, Suppose there is a set $E$ of $k+2$ vertices and name four of its vertices $a, b,c,d$. If a $k$-subset of $E$ contains exactly two elements from $\set{a,b,c,d}$, color it with the color of the edge between those points in the figure. Otherwise, color it $i_0$. This is a substructure of $\graph$.

        Notice that for every $(k+1)$-subset of $E$, the multiset of colors of its $k$-subsets is a sub-multiset of $\set{i,i,j,k, i_0^{k-2}}$, To be specific: 
        \begin{itemize}
            \item $E \setminus \set{a}$ and $E \setminus \set{c}$ have $k$-hyperedges colored $\set{i,j,k, i_0^{k-2}}$, so as as $(k+1)$-hyperedges in $\graph'$ they have color $i$.
            \item $E \setminus \set{b}$ and $E \setminus \set{d}$ have $k$-hyperedges colored $\set{i,i,k, i_0^{k-2}}$, so as as $(k+1)$-hyperedges in $\graph'$ they have color $j$.
            \item The other $(k+1)$-subsets of $E$ have have $k$-hyperedges colored $\set{i,i,j,k, i_0^{k-3}}$, so as as $(k+1)$-hyperedges in $\graph'$ they have color $i_0$. There are $k-2$ of these sets.
        \end{itemize}
        Thus, the set $E$ has its $(k+1)$-hyperedges colored $\{i, i,  j, j, i_0^{k-2}\}$ and therefore this multiset is in $A_k$, and more to the point $\{i, i, j, j\} \in A$, in contradiction with the hypothesis.

        \item If one interprets the diagram the figure in the proof of property (3) in the previous lemma in the same manner as the proof of property (2) in this one, the argument becomes analogous, and has been omitted for brevity.
    \end{enumerate}
    We conclude $A = \textrm{Im}(\hat g)$ is a palette.
\end{proof}

As a side note, the set $A_k$ defined in the previous proof has properties analogous to those of a palette, and could be considered a higher-dimensional generalization of a palette. It is possible to provide an alternative proof of the following lemma by using these generalized palettes, but the arguments required become quite cumbersome compared to extracting a palette $A$ from $A_k$, as we have done.

\begin{proposition}\label{only if direction of graph theorem}
    Let $\graph$ be the generic edge-colored $k$-hypergraph with $n$ colors and suppose $\graph$ has a transitive one-point extension $\graph'$, then $n$ is a power of $2$.
\end{proposition}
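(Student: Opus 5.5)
By Lemma \ref{existence of k-palettes}, a transitive one-point extension of the generic edge-colored $k$-hypergraph with $n$ colors yields a palette $A \subseteq M_n^4$ in $n$ colors. So it suffices to prove a purely combinatorial statement: \emph{if a palette in $n$ colors exists, then $n$ is a power of $2$.} I would prove this by turning the palette into an elementary abelian $2$-group structure on the set of colors $\set{1, \dots, n}$.

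\textbf{Step 1: define the operation.} Fix a color $e$ to serve as identity. For colors $i,j$, property (1) gives a unique multiset in $A$ containing $\set{i,j,e}$. Define $i \ast j$ as the fourth element of that multiset, i.e. $i\ast j := f(\set{i,j,e})$. In general, read $\set{a,b,c,d}\in A$ as saying ``$a+b+c+d=0$''.

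\textbf{Step 2: verify the group axioms.}
\begin{itemize}
\item Commutativity is automatic, since multisets are unordered.
\item By property (2), $\set{i,e,e,i} \in A$, so $i \ast e = i$. Likewise $\set{i,i,e,e}\in A$, so $i \ast i = e$; hence every element is its own inverse.
\item The key fact needed for associativity is
\[ \set{a,b,c,d}\in A \iff a\ast b = c \ast d. \]
To prove it, let $x = a\ast b$, so that $\set{a,b,e,x}\in A$. Since also $\set{x,x,e,e}\in A$ by (2), applying property (3) to these two multisets, which share the pair $\set{e,x}$, gives $\set{a,b,x,x}\in A$, i.e.\ $f(\set{a,b,x}) = x$. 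Similarly, if $c\ast d = x$ then $\set{c,d,x,x}\in A$. Now apply (3) to $\set{x,x,a,b}$ and $\set{x,x,c,d}$ to get $\set{a,b,c,d}\in A$. For the converse, uniqueness in (1) pins down $d$ given $a,b,c$.
\item Associativity: let $x=a\ast b$, $y = b\ast c$, $z = x\ast c$. We need $a\ast y = z$. Since $x = a\ast b$, we have $\set{a,b,x,e}\in A$. Since $x\ast c = z$ and $z\ast e=z$, the characterization gives $\set{x,c,z,e}\in A$. These share the pair $\set{x,e}$, so (3) yields $\set{a,b,c,z}\in A$. Applying the characterization again gives $a\ast z = b\ast c = y$, hence $z = a\ast y$, using that every element is an involution.
\end{itemize}

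\textbf{Step 3: conclude.} The colors now form a group in which every element has order at most $2$. Such a group is abelian, hence a vector space over $\Z/2\Z$, so its order $n$ is a power of $2$.

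\textbf{Main obstacle.} The delicate step is the associativity/characterization argument. Property (3) must be applied with exactly matching shared pairs, and multiplicities must be tracked with care whenever some of $a,b,c,d,e$ coincide. I expect those degenerate cases to be absorbed by property (2), but each one needs to be checked.
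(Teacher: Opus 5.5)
Your reduction to palettes via Lemma~\ref{existence of k-palettes} and your plan to put an elementary abelian $2$-group structure on the colors are both sound. However, your proof of the key characterization contains a false step.

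Applying (3) to $\set{a,b,e,x}$ and $\set{x,x,e,e}$ along the shared pair $\set{e,x}$ only returns $\set{a,b}\cup\set{x,e}=\set{a,b,x,e}$. The complement of $\set{e,x}$ in $\set{x,x,e,e}$ is $\set{x,e}$, not $\set{x,x}$. Moreover, the claim $\set{a,b,x,x}\in A$ is false whenever $n\ge 2$. Take $a=e\neq b$, so $x=b$. Then $\set{e,b,b,b}\in A$ would contradict the uniqueness in (1), because $\set{e,b,b}$ already lies in $\set{e,e,b,b}\in A$. In general, $\set{a,b,x,e}\in A$ forces $f(\set{a,b,x})=e$, so $\set{a,b,x,x}\in A$ only when $x=e$. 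So the forward direction of your characterization is unsupported as written, and so is the converse, which you obtain from it through uniqueness.

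The repair is one application of (3) in each direction:
\begin{itemize}
\item If $a\ast b=c\ast d=x$, then $\set{a,b,e,x}$ and $\set{c,d,e,x}$ share $\set{e,x}$, and (3) gives $\set{a,b,c,d}\in A$.
\item Conversely, if $\set{a,b,c,d}\in A$ and $x=a\ast b$, apply (3) to $\set{a,b,c,d}$ and $\set{a,b,e,x}$ along $\set{a,b}$ to get $\set{c,d,e,x}\in A$, i.e.\ $c\ast d=x$.
\end{itemize}
Similarly, at the end of the associativity step, $a\ast z=y$ and $a\ast y=z$ are literally the same statement, because $\set{a,z,e,y}=\set{a,y,e,z}$. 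Appealing to involutivity there would tacitly use associativity. Your worry about degenerate cases is unnecessary: (3) is stated for arbitrary labels $i,i',j,j',k,k'$, with coincidences allowed.

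With that fix your argument is complete, and it is genuinely different from the paper's. The paper first shows $n$ is even, using the fixed-point-free involution $g$ defined by $\set{i_0,j_0,k,g(k)}\in A$. It then ``blends'' each $k$ with $g(k)$ to produce a palette in $n/2$ colors, and inducts. Its laborious step is verifying the uniqueness part of (1) for the blended palette. In your language, $g$ is translation by $i_0\ast j_0$, and blending is passing to the quotient by the subgroup $\set{e,i_0\ast j_0}$. So the paper peels off one factor of $\Z/2\Z$ at a time, while you build the whole group at once.

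Your route is shorter, avoids the induction, and proves more. Every palette is, up to relabeling, the set of multisets $\set{a,b,c,d}$ with $a\ast b\ast c\ast d=e$ in $(\Z/2\Z)^m$. This makes precise the paper's remark that the $(\Z/2\Z)^m$ palette is essentially the only one, which the blending induction by itself does not establish.
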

\begin{proof}
    We shall prove palettes in $n$ colors only exist when $n$ is a power of 2. Suppose $A$ is a palette in $n$ colors for $n \geq 2$. We shall first prove $n$ is even.

    Take two arbitrary distinct colors $i_0, j_0$. Define a function $g$ which maps a color $k$ to the unique $\ell$ such that $\set{i_0, j_0, k, \ell} \in A$, whose existence is guaranteed by property (1). Notice that if $g(k) = \ell$ then $g(\ell) = k$, and $g$ has no fixed points since if $g(k) = k$ that implies $\{i_0, j_0, k, k\} \in A$ despite the fact that, by (2), $\{i_0, i_0, k, k\} \in A$, so $\{i_0, k, k\}$ would be contained in two elements of $A$, contradicting (1). Thus $g$ consists of disjoint transpositions which cover all of $\set{1, \dots, n}$, which can only exist if $n$ is even.

    Now, suppose $n = 2m$. We shall prove a palette in $m$ colors exists. 

    With $g$ defined as before, relabel the colors so that the first $m$ colors are all in different orbits of $g$. Define the blending operation $b: M_{2m}^4 \to M_m^4$ which replaces all instances of $i > m$ in a multiset with $g(i)$ and leaves instances of $i \leq m$ unchanged, and define a set $B = b(A) \subseteq M_m^4$. We claim $B$ is a palette in $m$ colors. Intuitively, we are identifying $i$ with $g(i)$ in all multisets of $A$ to create a new palette in half as many colors. To prove B is a palette, properties (2) and (3) are trivial, since all the sets those properties require to be in $B$, are sets which are in $A$ and feature only the colors 1 to $k$, and so they exist in $B$ as well, unchanged by $g$. The existence part of property (1) is similarly trivial; uniqueness, however, requires more work:
    
    Let $t = \{i, j, k\} \in M_m^3$. If $t' \in b\inv(t)$ and $1 \leq \ell \leq m$ is such that $t' \cup \set{\ell} \in A$ or $t' \cup \set{g(\ell)} \in A$, then $t \cup \set{\ell} \in B$. Notice that any $\ell$ with $t \cup \set{\ell} \in B$ must arise in this way, as the image $\ell$ under $g$ of a the unique element which completes a multiset in $g\inv(t)$ to an element of $A$. Otherwise, we would violate the uniqueness in property (1) of $A$, so for property (1) to hold of $B$ we must show all such $\ell$'s are in the same orbit of $g$, so that there is indeed a unique $\ell$ with $t \cup \set{\ell} \in B$.

    There are up to 8 distinct multisets in $b\inv(t) \subseteq M_n^3$, given by three binary choices between $i$ or $g(i)$, $j$ or $g(j)$ and $k$ or $g(k)$ (Note that since we do not assume $i, j, k$ to be distinct, some of these choices may be redundant, leading to a smaller $b\inv(t)$).
        
    Let us look at the particular case of two elements of $b\inv(t)$ of forms  $\{i, j, k\}$ and $\{i, j, g(k)\}$. Suppose they are completed to an element of $A$ by elements $\ell$ and $\ell'$ respectively. That is $\{i, j, k, \ell\} \in A$ and $\{i, j, g(k), \ell'\} \in A$, so by (3) we have $\{k, g(k), \ell, \ell'\} \in A$. But also by definition $\{i_0, j_0, k, g(k)\} \in A$, so by another application of (3) we obtain $\{i_0, j_0, \ell, \ell'\} \in A$, which means $\ell' = g(\ell)$. But this same logic can be applied analogously to any pair of elements in $b^{-1}(t)$ which differ in only a single entry, to show the elements which complete them are always in the same $g$-orbit. That is to say there is $1\leq\ell \leq m$ such that for any $t' \in b\inv(t)$, either $t' \cup \set{\ell} \in A$ or $t' \cup \set{g(\ell)} \in A$, and so $t$ is indeed contained in a unique element of $B$, which is $t \cup \set{\ell}$ (Note: While this argument is easiest to parse when $i, j, k$ are distinct, this is not a requirement of any step in it).

    Thus, $B$ is indeed a palette. We have proven that palettes in $n$ colors do not exist for odd $n >2$, and if they exist for an even $n$, then they exist for $n/2$. By a simple induction we may conclude palettes in $n$ colors may only exist when $n$ is a power of 2, and since the transitive extensions imply the existence of palettes, we obtain the result.

\end{proof}

Corollary \ref{if direction of graph theorem} and Proposition \ref{only if direction of graph theorem}
together prove Theorem \ref{main graph theorem}.

As a last remark, since it is known from \cite{thomas1991reducts} and the Lachlan-Woodrow classification of ultrahomogeneous graphs \cite{lachlan1980countable} that the Rado Graph is the only ultrahomogeneous graph which has proper, non-trivial reducts, from Corollary \ref{reducts corollary} we obtain
\begin{remark}
    The only ultrahomogeneous graphs which have transitive extensions are the generic graph, the empty graph and the complete graph.
\end{remark}

It is open whether this fact generalizes to higher dimensions.

\section{Tournaments and Their Generalizations}\label{tournaments}

A Tournament is a complete directed graph with no bidirectional edges. In the literature, there are conflicting ideas of how this notion should be extended to higher dimensions to define $k$-hypertournaments. One approach, going back at least to Assous \cite{assous1986enchainabilite} is to associate to each set of $k$ vertices a unique ordering of those vertices, while another, explored in some depth by Cherlin et al. \cite{cherlin2021ramsey} is to make a binary choice on each set of $k$ vertices such that the local isomorphisms on that set form precisely the alternating group $A_k$. To avoid confusion, we shall refer to the second notion as a $k$-orientation, and we will explore the transitive extensions of both $k$-hypertournaments and $k$-orientations, starting with the latter.

\subsection{k-Orientations}

\begin{definition}
    Let $k \geq 2$. A \emph{$k$-orientation} is a first order structure with a $k$-ary relation $T$ with the property that for any set of size $k$, the tuples from the set on which the relation $T$ holds are the orbit of a single (repetition-free) tuple by the action of the alternating group $A_k$.
\end{definition}

Notice that for any set of $k$ elements, half of its permutations will be in $T$. We may give a natural interpretation which motivates the name ``orientation'' by considering an embedding of $k$ points into $\mathbb{R}^{k+1}$ as linearly independent vectors $v_1, \dots, v_k$. Consider their generalized cross product, which can be thought of, mnemonically, as the ``determinant'' of a ``matrix'' whose first row is the ``vector of vectors'' composed of the unit vectors $e_1, \dots, e_{k+1}$, and whose subsequent rows are the vectors $v_i$ in some permutation. If one computes this determinant naively, it results in a vector which is perpendicular to all the $v_i$, with one of two possible orientations, and this choice of orientations is invariant under the action of $A_k$ on the permutation of the $v_i's$, but inverted under any transposition. The orientation of this cross product is then the interpretation of the $k$-orientation.

The main result of this section is the following

\maintheoremtwo*
As with the previous section, we will prove each direction as its own result.

\begin{definition}
    Let $(M, T)$ be a $(k+1)$-orientation. Two subsets $A$ and $A'$ of size $k+1$ are called \emph{near-equal} if $|A \setminus A'| = |A'\setminus A| = 1$. Given two near-equal sets $A$, $A'$, the map which takes the unique element of $A \setminus A'$ to the unique element of $A' \setminus A$ and fixes all other elements is called the \emph{match map}
    
     Two near-equal sets are said to \emph{agree} if the match map between them is \emph{not} a partial isomorphism. Two near-equal sets are said to \emph{disagree} if they do not agree.
    
    A $(k+1)$-orientation is called \emph{even} if for every set of $k+2$ elements, the number of pairs of $(k+1)$-element subsets which disagree is even.
\end{definition}

There is no intrinsic reason to state this definition for $(k+1)$-orientations rather than $k$-orientations, but we have done so to match the actual usage throughout proofs in this section. This definition of agreement might seem backwards at first glance, but the reason for choosing the nomenclature like this becomes clear by the following lemma:

\begin{lemma}
    Let $(M, T)$ be a $(k+1)$-orientation and $A \subseteq M$ be a set of size $k+2$. The agreement relation on the $(k+1)$-subsets of $A$ (with the convention that a set always agrees with itself) defines an equivalence relation. Equivalently, the disagreement relation defines a complete bipartite graph.
\end{lemma}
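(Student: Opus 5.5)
Let $A$ have size $k+2$. Each $(k+1)$-subset of $A$ has the form $A\setminus\{x\}$ for a unique $x \in A$, and any two distinct such subsets $A\setminus\{x\}$, $A\setminus\{y\}$ are automatically near-equal. The match map between them sends $y \mapsto x$ and fixes the remaining $k$ elements $A\setminus\{x,y\}$. The plan is to encode each $(k+1)$-subset by a sign and to show that agreement means ``equal sign''. An equivalence relation with at most two classes then follows, and its complement, disagreement, is a complete bipartite graph (one part possibly empty).

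First fix an enumeration $a_1,\dots,a_{k+2}$ of $A$. For each $x=a_m$, let $\overline{a}^{(m)}$ be the tuple obtained by deleting $a_m$ from $(a_1,\dots,a_{k+2})$, keeping the induced order. Define $\varepsilon(m)\in\{\pm1\}$ by $\varepsilon(m)=+1$ iff $T(\overline{a}^{(m)})$ holds. Since $T$ on a $(k+1)$-set is a single $A_{k+1}$-orbit, a tuple $\sigma(\overline{a}^{(m)})$ is in $T$ iff $\operatorname{sgn}(\sigma)\varepsilon(m)=+1$.

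Next, compute the match map $\mu$ from $A\setminus\{a_m\}$ to $A\setminus\{a_p\}$, with $m<p$. It sends $a_p\mapsto a_m$ and fixes the rest. It is a partial isomorphism iff it maps $\overline{a}^{(m)}$ to a tuple in $T$ exactly when $\overline{a}^{(m)}\in T$. The image $\mu(\overline{a}^{(m)})$ lists the elements of $A\setminus\{a_p\}$ with $a_m$ sitting in the slot formerly occupied by $a_p$. Sorting it back into $\overline{a}^{(p)}$ means moving $a_m$ past $a_{m+1},\dots,a_{p-1}$, which is a cycle of length $p-m$ with sign $(-1)^{p-m-1}$. So $\mu$ is a partial isomorphism iff $\varepsilon(p)(-1)^{p-m-1}=\varepsilon(m)$. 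Now set $\delta(m)=(-1)^m\varepsilon(m)$. The condition becomes $\delta(p)=-\delta(m)$. Hence the match map is \emph{not} a partial isomorphism, i.e.\ the sets agree, iff $\delta(m)=\delta(p)$.

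Finally, agreement is therefore the kernel relation of the function $\delta$ on the $k+2$ subsets: it is reflexive (by the stated convention), symmetric and transitive, with at most two classes $\delta^{-1}(+1)$ and $\delta^{-1}(-1)$. Disagreement holds exactly between the classes, so it is the complete bipartite graph with these parts.

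The main obstacle is the sign bookkeeping in the second step: getting the parity of the reordering permutation right, and checking that the answer does not depend on whether $m<p$ or $p<m$. Symmetry of the match-map condition handles the latter, since the inverse of a partial isomorphism is one. The resulting $(-1)^{p-m-1}$ is what makes the twisted sign $\delta$, rather than $\varepsilon$ itself, the right invariant, and it explains the ``backwards'' naming.
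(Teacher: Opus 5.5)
Your proof is correct, and it takes a different route from the paper's.

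\textbf{The paper's route.} The paper argues locally. It takes three distinct $(k+1)$-subsets $A_1,A_2,A_3$ with excluded points $a_1,a_2,a_3$ and common part $\overline b$, and normalizes to $T(a_2,a_3,\overline b)$. Chasing the two agreements gives $T(a_3,a_1,\overline b)$ and $T(a_1,a_2,\overline b)$, so a disagreement between $A_1$ and $A_3$ would force $T(a_2,a_1,\overline b)$, a contradiction. This proves only transitivity. The ``at most two classes'' part, which the complete bipartite reformulation needs, is then asserted by a one-line appeal to orbits rather than checked. The same kind of chase does show directly that two disagreements force an agreement.

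\textbf{Your route.} You fix an enumeration and build a global invariant $\delta(m)=(-1)^m\varepsilon(m)$, computing the sign of the reordering as a $(p-m)$-cycle. Agreement then becomes exactly the fibre relation of a $\{\pm1\}$-valued function. This gives reflexivity, symmetry, transitivity and the bound of two classes all at once. It also names the classes explicitly and correctly allows one of them to be empty, which the paper's phrase ``has two equivalence classes'' glosses over.

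Your sign count is essentially the native-permutation parity computation ($|j-k|-1$ transpositions) that the paper only uses later, in the proof of Proposition~\ref{even half of Orientation Theorem}. So your route also makes that later argument more transparent.

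One step is left implicit and is worth stating: testing the single tuple $\overline a^{(m)}$ suffices to decide whether $\mu$ is a partial isomorphism. This holds because $\mu$ commutes with rearranging coordinates and $T$ on each $(k+1)$-set is a single $A_{k+1}$-orbit.
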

\begin{proof}
    It is trivial to check that the agreement relation is reflexive and symmetric. We must show it is transitive. Suppose $A_1, A_2, A_3$ are $(k+1)$-subsets of $A$ such that $A_1$ agrees with $A_2$ and $A_2$ agrees with $A_3$. We wish to show $A_1$ agrees with $A_3$. If any two of them are equal, this is trivial, so assume not. Denote by $a_{i}$ the unique element of $A$ which $A_i$ excludes, and enumerate the remaining elements common to all sets as $b_1, \dots, b_{k-1}$. We have
    $$A_1 = \set{a_2, a_3,b_1, \dots, b_{k-1}},$$
    $$A_2 = \set{a_1, a_3,b_1, \dots, b_{k-1}},$$
    $$A_3 = \set{a_2, a_1,b_1, \dots, b_{k-1}}.$$
    Without loss of generality, we may assume $\models T(a_{2}, a_{3}, \overline{b})$ (otherwise work with the complement of $T$), Note that by the agreement of $A_1$ with $A_2$, we must then have $\models T(a_{3}, a_{1}, \overline{b})$, as the alternative would render the match map  between them an isomorphism. By the same token, from the agreement of $A_2$ with $A_3$, we must have $\models T(a_1, a_2, \overline{b})$. On the other hand, if $A_1$ disagreed with $A_3$, the match map between them would be an isomorphism which yields $\models T(a_2, a_1, \overline{b})$, a single transposition away from our previous conclusion, which is a contradiction, so $A_1$ must agree with $A_3$. We conclude that agreement is, indeed, an equivalence relation.
    
    Since every size $(k+1)$ set belongs to one of two orbits in $\Aut(M, T)$, the agreement relation has two equivalence classes, which finishes the proof.
\end{proof}

We refer to the classes of the agreement equivalence relation as the \emph{agreement classes} of $A$, and to the graph of the disagreement relation as the \emph{disagreement graph}.

Now we may better understand the concept of an even orientation. For a given $A$ of size $k+2$, the number of pairs of $(k+1)$-subsets which disagree with each other is equal to the product of the sizes of the two agreement classes of $A$. When $k$ is odd, there are a total of $k+2$ subsets of size $k+1$ for a given set $A$ of size $k+2$. Since $k+2$ is therefore an odd number, one of the agreement classes will always be of even size, so all orientations are even and the notion of even orientation is trivial. For even $k$, now $k+2$ is even and the definition essentially states that the agreement classes are both of even size. Notice that if given a set of $k+2$ points, it is possible to define a $(k+1)$-orientation on it which such that the disagreement graph realizes any complete bipartite graph on $k+2$ vertices.

For further intuition, if one embeds $k+2$ points into $\R^{k+1}$ in a ``generic'' way as the vertices of a $(k+1)$-simplex, its $k$-dimensional faces are oriented by the $(k+1)$-orientation of the relevant $(k+1)$-subset. The agreement classes then correspond to the faces which are oriented towards the interior versus the exterior of the simplex. One may think of the outward-oriented faces as analogous to the set of $(k+1)$-hyperedges in a $(k+1)$-hypergraph, which makes the connection to even hypergraphs clear. Of course, Such a notion is not invariant under the choice of embedding into $\R^{k+1} $, but any two embeddings either fully agree on inwardness/outwardness of the orientations (if the linear transformation which maps one to the other has positive determinant), or fully disagree (if negative), so the notion of evenness is independent of the chosen embedding.

\begin{proposition}\label{even half of Orientation Theorem}
    Let $k \geq 2$ be an even number, and $\graph = (G, T)$ be the generic $k$-orientation. Let $\graph' = (G', T') = (G \cup \set{0}, T')$ be the unique even $(k+1)$-orientation which satisfies
    $$T(\overline{x}) \Leftrightarrow T'(\overline{x}, 0)$$
    for all $k$-tuples $\overline{x}$. Then $\graph'$ is a transitive one-point extension of $\graph$.
\end{proposition}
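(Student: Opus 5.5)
I will mirror the proof of Proposition \ref{Extension of Rado Graph}, in three stages: well-definedness of $\graph'$, the extension property, and ultrahomogeneity (hence transitivity by the Homogeneity Test).

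\textbf{Well-definedness.} Fix distinct $a_1,\dots,a_{k+1}\in G$ and put $A=\set{a_1,\dots,a_{k+1},0}$. Of the $k+2$ subsets of $A$ of size $k+1$, the $k+1$ subsets containing $0$ already have their orientation fixed by the rule $T(\overline{x})\Leftrightarrow T'(\overline{x},0)$. By the preceding lemma these subsets fall into the two agreement classes. The remaining subset $\set{a_1,\dots,a_{k+1}}$ has two possible orientations, and flipping its orientation moves it from one agreement class to the other. Since $k$ is even, $k+1$ is odd, so the $k+1$ prescribed subsets split into one class of odd size and one of even size. Evenness forces the last subset into the odd class, and this determines $T'$ on it uniquely. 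Such a completion always exists, since any bipartition is realizable on $k+2$ points.

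The main obstacle is then checking that the resulting $T'$ is even on $(k+2)$-sets \emph{not} containing $0$. Here I would argue by the same induction as in the hypergraph case. Fix $k+2$ points of $G$. Note that reversing $T$ on a single $k$-set $E$ reverses $T'$ on exactly the two $(k+1)$-sets $E\cup\set{x}$, $E\cup\set{y}$ containing $E$. Flipping the orientations of two of the $(k+1)$-subsets of a $(k+2)$-set changes each agreement-class size by $0$ or $\pm 2$, so the parity of the class sizes is preserved. Since any $k$-orientation on $k+2$ points is reachable from a fixed one by such single flips, it then suffices to verify evenness for one base orientation. For the base case I would use a $k$-orientation induced from a linear order, e.g.\ $T(\overline{x})$ iff $\overline{x}$ is an even permutation of its increasing enumeration. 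I would either compute directly (using the simplex/determinant picture, where orientations come from an embedding in $\R^{k+1}$ and evenness is automatic) or note that this case is consistent with the circular-order style construction.

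\textbf{Extension property.} This goes exactly as before. $T'$ on tuples through $0$ encodes $T$, and $T'$ elsewhere is determined by it. Hence $\phi\in\Aut(\graph)$ iff $\phi\cup\set{0\mapsto0}\in\Aut(\graph')$.

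\textbf{Ultrahomogeneity.} First, every finite even $(k+1)$-orientation $H$ embeds into $\graph'$. To see this, pick $h_0\in H$ and induce a $k$-orientation on $H\setminus\set{h_0}$ via $T_H(\overline{h},h_0)$. Embed it into $\graph$ and send $h_0\mapsto 0$; uniqueness of the even completion shows this is an embedding. Adding a dummy point lets us avoid $0$ in the image. Then the back-and-forth from Proposition \ref{Extension of Rado Graph} carries over verbatim: finite configurations avoiding $0$ have their $\graph'$-type determined by their $\graph$-type, so ultrahomogeneity of $\graph$ gives extensions. Hence $\graph'$ is ultrahomogeneous, and it is transitive since all singletons are isomorphic.
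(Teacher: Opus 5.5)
\textbf{The base case is a genuine gap.} Your outline matches the paper's proof apart from the choice of base configuration: forced completion into the odd agreement class, parity invariance under reversing one $k$-set, a single base configuration, then the back-and-forth of Proposition \ref{Extension of Rado Graph}. The completion and flip steps are correct. The base case is the only place where anything actually has to be verified, and your claim that evenness is ``automatic'' there is false in general.

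Fix an auxiliary order on a $(k+2)$-set $Y=\{y_1<\dots<y_{k+2}\}$. Put $\epsilon(Y\setminus\{y_i\})=\pm1$ according to whether $T'$ holds on the increasing enumeration of $Y\setminus\{y_i\}$.
\begin{itemize}
\item Counting the transpositions introduced by the match map shows that $Y\setminus\{y_i\}$ and $Y\setminus\{y_j\}$ agree iff $(-1)^i\epsilon(Y\setminus\{y_i\})=(-1)^j\epsilon(Y\setminus\{y_j\})$.
\item Hence $Y$ is even iff $\prod_i\epsilon(Y\setminus\{y_i\})=\kappa$, where $\kappa=(-1)^{(k+2)(k+3)/2}$. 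For even $k$, $\kappa=1$ iff $k\equiv 2\pmod 4$.
\end{itemize}
Now take your linear base $S\subseteq G$ and order each $B\cup\{0\}$ with $0$ on top. Every $(k+1)$-subset through $0$ then has $\epsilon=1$, so the completion rule forces $\epsilon(B)=\kappa$ for every $(k+1)$-subset $B\subseteq S$. The agreement classes of $S$ are therefore its odd- and even-indexed $(k+1)$-subsets, each of size $(k+2)/2$. So $S$ is even exactly when $k\equiv 2\pmod 4$. For those $k$ your proof is complete once this check is written out: the forced structure on $S\cup\{0\}$ is just the linear $(k+1)$-orientation with $0$ on top.

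\textbf{For $k\equiv 0\pmod 4$ the statement itself fails.} Take for instance $k=4$. No base case can rescue the argument, since by your own flip invariance every $(k+2)$-subset of $G$ then fails to be even. In fact no even $(k+1)$-orientation on $k+3$ points exists at all. Evenness of each of its $k+3$ subsets of size $k+2$ would require each to contain an odd number of $(k+1)$-subsets oriented against a fixed linear order. A sum of $k+3$ odd numbers is odd, yet the sum of these counts is even, because every $(k+1)$-subset lies in exactly two of the $(k+2)$-subsets. So the statement with ``even'' is false for $k\equiv 0\pmod 4$.

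The paper's own base case does not avoid this. Its transposition count $|j-k|-1$ (with $k$ an index there) should be $|j-k|-2$ when $i$ lies between $j$ and $k$. Consequently its claim that near-equal subsets through $0$ always agree already fails for $k=2$: in $G_0'\setminus\{a_2\}$, the subsets missing $a_1$ and $a_3$ disagree.

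\textbf{Repair.} Require $\prod_{y\in Y}\epsilon(Y\setminus\{y\})=1$ for every $(k+2)$-set $Y$. This is independent of the auxiliary order when $k$ is even, since swapping two adjacent points reverses exactly $k$ of the factors. Equivalently, both agreement classes have size congruent to $(k+2)/2$ modulo $2$. This is evenness when $k\equiv 2\pmod 4$, and ``both classes odd'' when $k\equiv 0\pmod 4$. With this definition your linear base case is immediate. The rest of your argument, including ultrahomogeneity, then goes through verbatim, so the existence of a transitive extension for all even $k$ survives.
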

\begin{proof}
    We follow the spirit of the proof of Proposition \ref{Extension of Rado Graph}. Given a $(k+1)$-tuple $\overline{a}$ in $G$, we set the value of $T'(\overline{a})$ such that $\overline{a}\cup\set{0}$ satisfies the conditions of an even $(k+1)$-orientation. Explicitly, We sort the $(k+1)$-subsets of $\overline{a}\cup\set{0}$ which include 0 into disagreement classes. There are $k+1$ such subsets, which is an odd number, so exactly one of the classes must be of odd size, and we choose the $T'$-type of $\overline{a}$ so that $\overline{a}$ falls into that odd class, resulting in $\overline{a}\cup \{0\}$ having two disagreement classes of even size, as desired.

    Now we must check this satisfies the condition for being an even $(k+1)$-orientation for any set of size $k+2$ contained in $G$. We shall do this by induction. First we shall prove it for a particular $(k+2)$-subset of $G$, carefully constructed so that any two near-equal sets always agree. This property will make the proof very simple, but the actual construction takes some work.
    
    Enumerate $k+2$ elements as $G_0 = \set{a_1, \dots, a_{k+2}}$. For $1 \leq i, j\leq k+2$ let $\pi_{i j}$ denote the permutation of $G_0 \setminus \set{a_i, a_j}$ which lists its elements in order. We refer to this as the native permutation of $G_0 \setminus \set{a_i, a_j}$. Set $T(\pi_{ij})$ to be true if and only if $i+j$ is even. This defines a $k$-orientation structure $\graph_0$ which is a substructure of $\graph$, and therefore extends to a structure $\graph'_0 = (G_0\cup\set{0}, T') \leq \graph'$ in the natural way, where $T'(\pi_{ij}, 0)$ holds if and only if $i + j$ is even.

    We shall first prove any two near-equal $(k+1)$-subsets of $G_0$ which both include 0 necessarily agree. This follows essentially from the following fact: Given the permutation $(\pi_{ij}, 0)$ and a $k \neq j$, if we change the entry $a_k$ for $a_j$ in $\pi_{ij}$, the resulting permutation is in the same $A_{k+1}$-orbit as $(\pi_{ij}, 0)$ if and only if $j-k$ is odd, since to map one to the other requires $|j-k|-1$ transpositions. 
    
    Now, given two near-equal sets $A_1 = G_0' \setminus \set{a_i, a_j}$ and $A_2 = G_0' \setminus \set{a_i, a_k}$ which both include 0, the match map between them is of the form described in the previous paragraph. Therefore, the image of $A_1$'s native permutation under the match map is in the same $A_{k+1}$-orbit as $A_2$'s native permutation if and only if $j-k$ is odd, but this is also precisely the requirement for $A_1$ and $A_2$'s native permutations to be in distinct orbits of $\Aut(\graph'_0) = A_{k+1}$. Hence the match map is never a partial isomorphism, and so indeed the two sets always agree.

    What this gives us is that for any $(k+1)$-subset $A$ of $G_0$, all $(k+1)$-subsets of $A \cup \set{0}$ which include 0 agree with each other, and so $A$ must itself also agree with them all to preserve parity. At last, this lets us determine the type of a $k+1$-subset which does not include zero, with a somewhat unsurprising result: we may relabel 0 as $a_{k+3}$ and extend the notion of native permutation to subsets arbitrary subsets of $G_0'$ in the natural way, where $T(\pi_{ij})$ holds if and only $i+j$ is even, for $1 \leq i,j\leq k+3$. By repeating the prior argument, this yields the fact we wanted: Any two near-equal $(k+1)$-subsets of $\graph_0'$ indeed disagree with each other. Therefore the total number of disagreements on $(G_0',T')$ is zero, which is even, and so the particular $k$-orientation $\graph_0$ satisfies our requirements.

    Now we may proceed with the induction step. Suppose $\graph_1$ is a substructure of $\graph$ of size $k+2$ such that $G_1$ has an even number of agreements as a substructure of the extension $\graph'$. If we flip the $T$-type of a particular $k$-subset, this flips the $T'$-type of the two $(k+1)$-subsets of $G_0$ which contain it. Note that flipping the type of a $k$-subset changes it to the other disagreement class. Since we have done two such flips on $G_1$, the parities of the sizes of the disagreement classes in $G_1$ are preserved, which is what we sought.

    By induction, we may conclude that $\graph'$ is well-defined. The proof that it is a transitive extension is analogous to the proof of the same in Proposition \ref{Extension of Rado Graph}.
\end{proof}

This proves half of the main theorem. Now we prove the other half.

\begin{proposition}\label{odd half of orientation theorem}
    Let $k > 2$ be an odd number. The generic $k$-orientation $(G, T)$ has no transitive one-point extensions.
\end{proposition}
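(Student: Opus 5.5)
The plan is to imitate the proof of Proposition \ref{only if direction of graph theorem}. First I pin down the shape of a putative extension. Then I extract a finite combinatorial object analogous to a palette. Finally I show that this object cannot exist when $k$ is odd.

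\textbf{Step 1: the extension is a $(k+1)$-orientation.} Suppose $\graph'=(G\cup\set{0},T')$ is a transitive extension in canonical form (Lemma \ref{arity growth lemma}), so that $T(\overline{x})\Leftrightarrow T'(\overline{x},0)$. I would first show that $T'$ is a $(k+1)$-orientation, following the proof of Lemma \ref{Conditions for generic Graph extensions}.
\begin{itemize}
\item On a $(k+1)$-set containing $0$, the local isomorphisms fixing $0$ are exactly the even permutations of the other $k$ points.
\item By transitivity, on any $(k+1)$-set the even permutations fixing any chosen point are local isomorphisms. For $k\ge 3$ these generate $A_{k+1}$.
\item No odd permutation can be a local isomorphism. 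Otherwise the group of local isomorphisms would be $S_{k+1}$, and its stabiliser of $0$ would be $S_k$, contradicting the case of sets containing $0$.
\end{itemize}
Hence the orderings of each $(k+1)$-set fall into exactly two $A_{k+1}$-orbits, and $T'$ is a $(k+1)$-orientation.

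\textbf{Step 2: the type of a $(k+1)$-set of $G$ is a function of its agreement partition.} Let $a\subseteq G$ have size $k+1$, and consider the $k+1$ subsets of $a\cup\set{0}$ that contain $0$. Their agreement relation in $\graph'$ coincides with the agreement relation of the corresponding $k$-subsets of $a$ in $\graph$, because a match map fixing $0$ is a partial isomorphism of $\graph'$ exactly when it is one of $\graph$.

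By the argument of the lemma on agreement classes, the isomorphism type of a $k$-orientation on $k+1$ points is its unordered agreement partition $\set{s,k+1-s}$. By genericity every such partition occurs in $\graph$. The $T'$-type of $a$ must depend only on the $\Aut(\graph)$-orbit of $a$. So there is a rule $f$ sending each partition $\set{s,k+1-s}$ to the class that $a$ joins. This yields the allowed partitions $\set{t,k+2-t}$ of $(k+2)$-sets of $\graph'$.

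By transitivity this rule cannot depend on which point plays the role of $0$. This gives two conditions on the set $U$ of allowed partitions of $k+2$:
\begin{itemize}
\item every partition of $k+1$ extends to exactly one member of $U$ by adding one point;
\item every single-point deletion of a member of $U$ is a partition of $k+1$ that extends back to that member.
\end{itemize}
Checking these conditions is just a path-covering computation on the possible minimum class sizes. I expect it does \emph{not} by itself give a contradiction, since a suitable $U$ seems to exist for either parity of $k$.

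\textbf{Step 3: the main obstacle, an analogue of palette property (3).} The real work is to find the analogue of the third palette property, which is a constraint coming from $(k+3)$-point configurations. Concretely, I would choose a $k$-orientation on $k+2$ points of $G$ in which the disagreement pattern is controlled, much like the "native permutation" construction $\graph_0$ in Proposition \ref{even half of Orientation Theorem}. I would then compute the forced $T'$-types of all its $(k+1)$-subsets via $f$. From these I would read off the agreement partition of this $(k+2)$-set, and of the $(k+2)$-sets obtained by swapping a point with $0$.

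I expect the following to happen when $k$ is odd. The count of transpositions needed to compare native permutations, which is $|j-\ell|-1$ as in the even case, changes parity. As a result the forced types produce some $(k+2)$-subset whose partition is either not in $U$, or is reached in two different ways, violating uniqueness. The first test I would try is the configuration $\graph_0$ with $T(\pi_{ij})$ true if and only if $i+j$ is even. In the even case every near-equal pair agreed there. For odd $k$ I would verify that the induced types on $\graph_0\cup\set{0}$ contradict transitivity once $0$ is exchanged with some $a_i$, meaning the orbit of the $(k+2)$-set $G_0\cup\set{0}\setminus\set{a_i}$ is computed inconsistently. I would use this single explicit contradiction to finish the proof.
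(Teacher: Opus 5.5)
\textbf{There is a gap: the proposal never reaches a contradiction.} Your Step 1 is correct and is exactly how the paper begins, and the bookkeeping in Step 2 is also correct. But Step 3 only describes a computation you expect to fail; it is never carried out. Worse, the framework of Step 2 is too coarse to see where the failure lies.

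As you suspected, the covering conditions on $U$ are satisfiable for odd $k$. The unique solution consists of the partitions whose smaller part $t$ satisfies $t\equiv (k+1)/2 \pmod 2$, so nothing goes wrong at the level of unordered partitions. What your rule $f$ ignores is the balanced case $s=(k+1)/2$. There both agreement classes of $a$ have the same size, so ``the class that $a$ joins'' is not determined by the partition. The real question is whether that class can be chosen invariantly at all. It cannot, and that is the whole proof.

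\textbf{The missing idea} is a $(k+1)$-point substructure $G_0$ of $\graph$ with an automorphism that is an \emph{odd} permutation of $G_0$. Since $k+1$ is even, a $(k+1)$-cycle $\sigma$ works. The paper takes $G_0$ with both agreement classes of size $(k+1)/2$. It lets $\sigma$ follow a Hamiltonian cycle of the disagreement graph $K_{(k+1)/2,(k+1)/2}$.
\begin{enumerate}
\item For each $k$-subset $A$, the match map $A\to\sigma(A)$ is an isomorphism, because consecutive sets on the cycle disagree.
\item $\sigma|_A$ differs from this match map by a $k$-cycle, which is even because $k$ is odd. Hence $\sigma$ is an automorphism of $G_0$.
\end{enumerate}
A simpler way to get such a $G_0$: fix an orientation on one $k$-subset and transport it by the powers of $\sigma$, which act regularly on the $k$-subsets.

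To conclude, use that $\graph$ is ultrahomogeneous and $\Aut(\graph')_0=\Aut(\graph)$. Then $\sigma\cup\set{0\mapsto 0}$ is a partial isomorphism of $\graph'$, so $\sigma$ is a local automorphism of the $(k+1)$-set $G_0$ in $\graph'$. By your Step 1 that group is $A_{k+1}$, yet $\sigma$ is odd.

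In your own language: $\sigma\cup\set{0\mapsto 0}$ maps $G_0$ to itself and swaps the two agreement classes formed by the $(k+1)$-subsets of $G_0\cup\set{0}$ containing $0$. Since agreement is preserved by isomorphisms, $G_0$ cannot agree with either class. Only $(k+2)$-point configurations are involved. You need no analogue of palette property (3), and no computation with the configuration $\graph_0$.
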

\begin{proof}
    First we must observe that if such an extension exists, then it is a $(k+1)$-orientation. By Lemma \ref{arity growth lemma} if it is in canonical form it must consist of a relation $T'$ on $G\cup\set{0}$ with $T'(\overline{x}, 0) \Leftrightarrow T(\overline{x})$. From here we may repeat the same proof as Lemma \ref{Conditions for generic Graph extensions}, but replacing arbitrary permutations with even permutations, for an analogous result

    Now, let $G_0$ be a size $k+1$ substructure of $\graph$ such that the agreement classes of the $k$-subsets of $G_0$ are of equal size, which exists since $k+1$ is even. Construct a permutation $\sigma$ of $G_0$ by taking a Hamiltonian path through the disagreement graph and mapping $a$ to $b$ if $G_0 \setminus a$ is followed by $G_0 \setminus b$ in the path. Such a path can be guaranteed to exist since the disagreement graph of $G_0$ is a complete bipartite graph with equal halves. 
    
    We claim this cyclic permutation is an automorphism of $G_0$. Indeed, for any set $A \subseteq G_0$ of size $k$, its image under $\sigma$ is, by construction, the next set on the Hamiltonian path, and therefore $\sigma(A)$ always disagrees with $A$, meaning the match map $\mu$ between them is an isomorphism. Write $A = G_0 \setminus \set{a}$ and $\sigma(A) = G_0 \setminus \set{a'}$. If $\overline{a}$ is a permutation of $A$, $\mu(\overline{a})$ simply consists of switching $a'$ for $a$, while $\sigma(\overline{a})$ consists of taking a cyclic permutation of $G_0$ which maps $a$ to $a'$ and applying it to each element of $\overline{a}$. Thus, the two permutations $\mu(\overline{a})$ and $\sigma(\overline{a})$ are a cyclic permutation (of length $k$) away from each other. Since $k$ is odd, the cyclic permutation is of even parity, and therefore both permutations have the same $T$-type. Since $\mu$ is an isomorphism, so is $\sigma$.

    Now we reach a contradiction. The permutation $\sigma$ is an automorphism of $G_0$. Therefore, $\sigma \cup \set{0 \mapsto 0}$ is an automorphism of $G_0' = G_0 \cup\set{0} \subseteq G'$, but such a thing cannot be, as that is a cycle of even length on $G_0$, which is an odd permutation and thus cannot preserve the $(k+1)$-orientation $T'$. We conclude $\graph'$ cannot exist.
\end{proof}

Propositions \ref{even half of Orientation Theorem} and \ref{odd half of orientation theorem} together prove Theorem \ref{main orient theorem}.

The fact that both the generic graph and the generic tournament have transitive extensions also yields transitive extensions for other generic binary relations. The generic directed graph with no bidirectional edges $(G, S)$ can be interpreted from the merge of the random graph $(G, R)$ and the random tournament $(G,T)$ by defining $S(a,b) \iff R(a,b) \wedge T(a,b)$. From here, it becomes a straightforward exercise to see that it has a transitive extension, given by merging the generic even 3-hypergraph $(G', R')$ and the generic even 3-orientation $(G', T')$, and interpreting a ternary relation $S'$ which is the conjunction of $T'$ and $R'$.

Similarly, the generic digraph, that is, the generic irreflexive binary relation, can be interpreted from the merge of two generic directed graphs with no bidirectional edges, by taking the disjunction of their relations, and has an analogous transitive extension.

\subsection{k-Hypertournaments}

\begin{definition}
    Let $k \geq 2$. A \emph{$k$-hypertournament} is a first order structure with a $k$-ary relation $T$ with the property that for any set of size $k$, $T$ holds on exactly one ordering of its elements. A 2-hypertournament is simply called a tournament.
\end{definition}

A useful way to think of $k$-hypertournaments is as an assignment of a linear order to the elements of any set of size $k$.

In this section we will prove generic $k$-hypertournaments have transitive one-point extensions if and only if $k = 2$. The positive result is simply the $k=2$ case of the previous section. To prove the negative result, we will first need to prove a result analogous to Lemma \ref{Conditions for generic Graph extensions}, which will become trivial in hindsight once the actual result is proven, except in the case $k=2$ where it is merely redundant, as a cyclic 3-hypertournament is isomorphic to a 3-orientation.

\begin{lemma}\label{Conditions for Tournament extensions}
    Let $\graph = (G, T)$ be a generic $k$-hypertournament. If $\graph$ has a transitive one-point extension $\graph'$ in canonical form, then $\graph'$ is a structure $(G', T')$ such that the automorphism groups induced on sets of size $k+1$ are all regular of order $k+1$ and isomorphic to each other.
\end{lemma}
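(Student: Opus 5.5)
We follow the template of Lemma \ref{Conditions for generic Graph extensions}. By Lemma \ref{arity growth lemma}, the canonical form $\graph'$ consists of a single $(k+1)$-ary relation $T'$ on $G' = G\cup\set{0}$ with $T(\overline{x}) \Leftrightarrow T'(\overline{x},0)$. By Fact \ref{transitivity degree jump fact}, $\graph'$ is $(k+1)$-transitive, since the generic $k$-hypertournament is $(k-1)$-transitive; in fact it is $k$-transitive on unordered sets in the appropriate sense. For a $(k+1)$-set $B\subseteq G'$, write $H_B$ for the group of local isomorphisms of $B$, i.e.\ the permutations of $B$ preserving $T'$. By Lemma \ref{Extensions of homogeneous structures}, $\graph'$ is ultrahomogeneous, so $H_B$ is exactly the setwise stabilizer of $B$ in $\Aut(\graph')$ restricted to $B$.

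First, we show that $H_B$ acts transitively on $B$. The set $B$ contains some point, and by transitivity we may move $B$ so that it contains $0$. Then $B=A\cup\set{0}$ with $|A|=k$. Every $k$-subset of $G$ carries a linear order given by $T$, and any two $k$-sets are isomorphic via their unique order-preserving map. Since the map $A\to A$ fixing the order is the identity, the local isomorphisms of $B$ fixing $0$ are just the identity. So the stabilizer of the point $0$ in $H_B$ is trivial, and by transitivity of $\Aut(\graph')$ the same holds for every point of every $(k+1)$-set: $H_B$ acts semiregularly on $B$.

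For transitivity we use that, for any $a\in B$ and $b\in B$, the types of $0$ and $a$ coincide. The key step is to show that each $b\in B$ can be sent to each $b'\in B$ by a local isomorphism of $B$. The plan is to argue that any two $(k+1)$-sets of $G'$ are isomorphic. A $(k+1)$-set $B$ is determined up to isomorphism by choosing a point $p\in B$ and the linear order on $B\setminus\set{p}$ induced by the copy of $T$ obtained by moving $p$ to $0$. All $k$-sets of $\graph$ are isomorphic, so all pointed $(k+1)$-sets $(B,p)$ are isomorphic. Hence for $p,p'\in B$ there is an isomorphism $(B,p)\to(B,p')$, which is a local isomorphism of $B$ sending $p$ to $p'$. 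This gives transitivity. Combined with semiregularity, $H_B$ is regular of order $k+1$. Since all $(k+1)$-sets are isomorphic, as just shown, the groups $H_B$ are all isomorphic (indeed conjugate via these isomorphisms).

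The main obstacle is making the pointed-isomorphism step rigorous. Specifically, we must show that an isomorphism of $\graph'$-substructures $(B,p)\cong(B',p')$ follows from $\Aut(\graph')_0=\Aut(\graph)$ together with transitivity. We do this by choosing $g,g'\in\Aut(\graph')$ with $g(p)=0$ and $g'(p')=0$. We then note that $g(B)\setminus\set{0}$ and $g'(B')\setminus\set{0}$ are $k$-sets of $G$, hence related by some element of $\Aut(\graph)=\Aut(\graph')_0$ by ultrahomogeneity of $\graph$. Composing these maps gives the required isomorphism. With that in hand, the rest follows formally.
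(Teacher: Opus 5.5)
Your proof is correct, but it reaches regularity by a different route from the paper. The paper gets semiregularity exactly as you do: a local isomorphism fixing $0$ must preserve $T$ on the remaining $k$-set, hence is trivial, and this transfers to every point by transitivity. For the order, however, the paper counts. The orbits of $\Aut(\graph')$ on $(k+1)$-tuples of distinct points are in bijection with the $k!$ orbits of $\Aut(\graph)$ on $k$-tuples, so orbit--stabilizer together with the free action forces $|H_B| = k+1$. The paper then attributes the ``isomorphic to each other'' clause to ultrahomogeneity.

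You instead show directly that $\graph'$ is homogeneous on $(k+1)$-sets. Any pointed set $(B,p)$ is carried to any $(B',p')$ by some $g'^{-1}hg$ with $h \in \Aut(\graph')_0 = \Aut(\graph)$. This uses only transitivity of $\graph'$ and the fact that all $k$-sets of $\graph$ lie in one orbit. Taking $B=B'$ gives transitivity of $H_B$ on $B$, hence regularity. Taking $B \neq B'$ shows all the $H_B$ are conjugate.

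Your route makes explicit something the paper's count leaves implicit: there is only one isomorphism type of $(k+1)$-set. In the paper this follows only because each set-orbit contributes $(k+1)!/|H_B| \geq k!$ tuple-orbits, which is never spelled out, and ultrahomogeneity alone would not give it. The paper's count is shorter and needs only the number of orbits, not set-transitivity of $\graph$ on $k$-sets.

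One slip should be deleted. Fact \ref{transitivity degree jump fact} applied to the $(k-1)$-transitive structure $\graph$ gives that $\graph'$ is $k$-transitive, not $(k+1)$-transitive. The stronger claim is false: it would force $H_B = S_{k+1}$, which contradicts your own conclusion $|H_B| = k+1$ for $k \geq 2$. Nothing later in your argument uses it, so the proof stands once that sentence is removed.
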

\begin{proof}
      
    For any $k$-tuple $\overline{x}$ in $\graph$, the identity is the only permutation on $\overline{x}$ which is a local isomorphism (in $\graph)$, so if a permutation of $\overline{a} \cup \{0\}$ fixes $0$, it has to fix all other elements in $\overline{a}$ to be a local isomorphism (in $\graph$'), but this is part of the type of 0, so by transitivity any permutation on $k+1$ elements which fixes one of them, must fix all of them to be a local automorphism. In other words the group of local automorphisms of $\overline{x}$ is free.

    But, note that it cannot be the case that no nontrivial permutations of $k+1$ elements are automorphisms. This is because the number of $(k+1)$-orbits on $\graph'$ must coincide with the number of $k$-orbits on $\graph$, which is $k!$, and since the group of local automorphisms of $\overline{x}$ is free, by the orbit-stabilizer theorem the group of local automorphisms of $\overline{x}$ must have order $k+1$. A group of order $k+1$ may only act freely on a set of size $k+1$ if the action is regular. 

    The fact that all induced actions on sets of $k+1$ elements are isomorphic follows from the fact that $(G', T')$ is necessarily ultrahomogeneous.
\end{proof}

\begin{proposition}
    Let $k> 2$, then the generic $k$-hypertournament has no transitive one-point extension.
\end{proposition}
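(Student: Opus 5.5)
The plan is to reduce to the obstruction of Section 3 by fixing parameters. Suppose $\graph'=(G',T')$ is a transitive extension of the generic $k$-hypertournament $\graph=(G,T)$ with $k>2$, in canonical form as in Lemma \ref{arity growth lemma}. I would fix $k-2$ distinct points $\overline{c}=(c_1,\dots,c_{k-2})$ of $G$ and pass to pointwise stabilisers. Since $\Aut(\graph')$ is $(k-1)$-transitive by Fact \ref{transitivity degree jump fact}, $\Aut(\graph')_{\overline{c}}$ is transitive on $G'\setminus\overline{c}$. Its stabiliser of $0$ is $\Aut(\graph)_{\overline{c}}$. Hence the structure $\graph_{\overline{c}}$ induced on $G\setminus\overline{c}$ by $\graph$ with the parameters $\overline{c}$ named has a transitive one-point extension, namely $\graph'$ with $\overline{c}$ named.

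Next I would identify $\graph_{\overline{c}}$. For a pair $x,y\in G\setminus\overline{c}$, the only information is the linear order that $T$ assigns to $\set{x,y}\cup\overline{c}$. This splits into two parts. The first is the pattern of that order with $x,y$ not distinguished, which takes $k!/2$ values and is symmetric in $x,y$. The second is the relative order of $x$ and $y$, which is a tournament. So $\graph_{\overline{c}}$ is the merge of a generic edge-coloured graph with $k!/2$ colours and a generic tournament. I would check this by the standard back-and-forth argument from ultrahomogeneity of $\graph$: any finite such structure is realised over $\overline{c}$, since the $k$-sets containing $\overline{c}$ and two new points can be chosen freely. Note that $k!/2$ is divisible by $3$ for $k\ge 3$, so it is never a power of two.

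The remaining step is the main obstacle. I would run the palette argument of Lemma \ref{existence of palettes} and Proposition \ref{only if direction of graph theorem} directly on $\graph_{\overline{c}}$. The extra tournament component prevents quoting Theorem \ref{main graph theorem} verbatim, because an extension of a merge need not restrict to an extension of the reduct. The argument would go as follows.
\begin{enumerate}
\item Use the analogue of Lemma \ref{Conditions for Tournament extensions} and Lemma \ref{Conditions for generic Graph extensions} for the extension of $\graph_{\overline{c}}$ to show that the colour information in the extension is a symmetric colouring of triples. This colouring would be a function of the multiset of pair-colours, after quotienting out the tournament data, which can be absorbed since tournaments have the orientation extension of Proposition \ref{even half of Orientation Theorem} with $k=2$.
\item From this colouring, extract a function $f\colon M^3_{k!/2}\to\set{1,\dots,k!/2}$. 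Then verify properties (1)--(3) of a palette using the same four-point diagrams, choosing the tournament orientations on the diagram edges arbitrarily; this is consistent because the merge is free.
\item Conclude from the proof of Proposition \ref{only if direction of graph theorem} that $k!/2$ is a power of two, which is a contradiction.
\end{enumerate}

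If the decoupling in step (1) fails, my fallback is to argue directly with the regular groups of Lemma \ref{Conditions for Tournament extensions} on $(k+1)$-sets. I would use them to show that a cyclic symmetry must be induced on a suitable $(k+1)$-subset of $G$, in the style of Proposition \ref{odd half of orientation theorem}. Then I would exhibit a finite $k$-hypertournament on $k+1$ points, in the age of $\graph$, whose only automorphism is trivial, which would give the contradiction.
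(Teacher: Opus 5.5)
Your reduction step is sound. Since $\Aut(\graph')$ is $k$-transitive, $\Aut(\graph')_{\bar c}$ is indeed a transitive one-point extension of $\Aut(\graph)_{\bar c}$ acting on $G\setminus\bar c$. The argument breaks right after that.

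First, $\graph_{\bar c}$ is not just a $k!/2$-coloured graph merged with a tournament. For each $3\le j\le k$ and each $(k-j)$-subset $D$ of $\bar c$, the relation $T$ on $D\cup\set{x_1,\dots,x_j}$ gives a $j$-ary relation on $G\setminus\bar c$; for $j=k$ this is $T$ itself. So the type of a triangle is not determined by its pair colours, and the function $f$ of Lemma \ref{existence of palettes} is not defined on $M^3_{k!/2}$.

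More seriously, the decoupling in your step (1) is false, and for a structural reason. By Lemma \ref{Conditions for Tournament extensions}, every $(k+1)$-set of $\graph'$ has a regular local automorphism group. A local automorphism fixing the $k-2\ge 1$ points of $\bar c$ is therefore trivial, so every $3$-set of $G'\setminus\bar c$ is rigid in the extension. There are $k!$ orbits on ordered triples, hence only $k!/6$ orbits on $3$-sets. The $k!/2$ edge colours correspond bijectively to pairs consisting of a $3$-set orbit and a choice of which vertex plays the role of $0$, giving three distinct colours per face. So a face $\set{x,y,z}$ carries no colour from the edge palette: it shows the colour of $yz$ as seen from $x$, a different colour of $xz$ as seen from $y$, and so on. For $k=3$ this is stark: all $3$-sets of $G'\setminus\set{c}$ lie in one orbit, while $\graph_c$ has three edge colours.

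The palette axioms need each face of a tetrahedron to carry a single colour from the same set as the edges. Only then can the rule ``three faces determine the fourth'' be transported from $0$ to an arbitrary vertex. For hypergraphs this is exactly what Lemma \ref{Conditions for generic Graph extensions} supplies. Here colour and orientation are entangled in the extension, and Proposition \ref{even half of Orientation Theorem} for tournaments cannot split them apart.

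Your fallback runs in the wrong direction. The groups $H_S$ act on $S$ inside $\graph'$ and do not fix $0$, so they induce no symmetry of $S$ in $\graph$. What does hold is $\Aut_{\graph}(S)\le H_S$: extend by ultrahomogeneity of $\graph$ and fix $0$. Hence a rigid $(k+1)$-point hypertournament yields no contradiction. Using the correct inclusion with hypertournaments invariant under two non-isomorphic regular groups of degree $k+1$ only works when two groups of order $k+1$ exist, so it misses, for example, $k=4$.

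The paper avoids parameters altogether. It merges $\graph$ with an auxiliary generic linear order and records each orientation relative to that order, using cosets of $H$ in $S_{k+1}$ for $(k+1)$-sets. This produces symmetric $k!$-colourings, to which Theorem \ref{main graph theorem} is applied.
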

\begin{proof}
    Suppose for a contradiction that $\graph = (G, T)$ is a generic $k$-tournament with a transitive extension $(G, T')$. By the previous lemma there is a fixed finite permutation group $H$ of order $k+1$ which is isomorphic to the induced automorphism group of any set of size $k+1$.  

    We will deduce a contradiction by using this to obtain an extension of a generic edge-colored $k$-hypergraph with $k!$ colors, which is forbidden by Theorem \ref{main graph theorem}.

    Define an edge-colored $k$-hypergraph with $k!$ colors on $G$, where each color is indexed by an element of the symmetric group $S_k$, as follows: Fix a generic linear ordering $<$ on $G$ which is unrelated to $T$ (that is, take the merge of $(G,T)$ with a linear ordering). For every set $A \subseteq G$ of size $k$, let $T_A$ and $O_A$ be the unique orderings of $A$ given by the $k$-hypertournament $T$ and by a decreasing sequence on $<$, respectively. Color $A$ with the color of the unique permutation in $S_k$ whose natural (left) action maps $O_A$ to $T_A$. Name this $k$-hypergraph structure $(G, (R_i))$. Notice $(G, (R_i), <)$ is interdefinable with $(G, T, <)$.

    Similarly, we define an edge-colored $(k+1)$-hypergraph with $k!$ colors on $G'$ in a slightly more delicate manner: Let $\set{g_\sigma H: \sigma \in S_k}$ be the enumeration of (left) cosets of $H$ in $S_{k!}$ indexed by elements of $S_k$, where $g_\sigma$ is $\sigma \cup \set{k+1 \mapsto k+1}$. Since any two distinct permutations of a set of size $k+1$ which both end in 0 are necessarily in different cosets of $H$, and this property is part of the type of 0, this is well-defined. For every set $A' \subseteq G'$ of size $k+1$, let $T'_A$ be the restriction of $T'$ to $A'$, and let $O'_A$ be the ordering of $A$ in descending order of $<$ (with 0 taken as minimum by arbitrary convention). Color $A$ with the unique $g_\sigma$ such that $g_\sigma H \cdot \overline{a} = T'|_{A'}$. Name this edge-colored $(k+1)$-structure $(G', (R'_i))$.

    We claim that $(G,(R_i))$ is (with a change of indexes) the generic edge-colored $k$-hypergraph with $k!$ colors. Indeed, if $\mathcal{H}_0$ is a finite substructure of that hypergraph, then if we impose an arbitrary ordering on $\mathcal{H}_0$ we obtain a structure which is bi-interpretable with a finite tournament with a linear ordering by the identity map, which is a finite substructure of $(G, T, <)$, and thus embeds into it. Unwinding the chain of interpretations, this embedding turns into an embedding of $\mathcal{H}_0$ into $(G, (R_i))$, and since it is ultrahomogeneous $(G, (R_i))$ is indeed the generic edge colored $k$-hypergraph with $k!$ colors.
    
    Similarly, (with a change of indexes) $(G, (R_i'))$ is the generic edge-colored $(k+1)$-hypergraph in $k!$ colors, which follows by a similar argument.

    Notice that by construction, a permutation of $G$ is an automorphism of $(G, (R_i))$ if and only if it preserves which permutation maps $O_A$ to $T_A$ for every $k$-set $A$ in $G$, and that this happens if and only if the same permutation with $0\mapsto 0$ preserves which coset of $H$ in $S_{k+1}$ maps $O'_A$ to $T'_A$ by its action, which is equivalent to preserving the $(G',(R'_i))$-structure. Thus, we obtain that $(G',(R'_i))$ is a transitive extension of $(G, (R_i))$, despite the fact that this is impossible by Theorem \ref{main graph theorem} since such an extension can only exist when the number of colors is a power of two, which $k!$ is not for $k > 2$.

\end{proof}

\section{Equivalence Relations}

To complete the discussion of canonical homogeneous binary relations, let us examine the case of transitive extensions of generic equivalence relations. There are three meaningful cases to consider: Those with finitely many infinite classes,  those with infinitely many infinite classes, and those with infinitely many finite classes (of the same size). This last one is unique among structures in this article in that it does not have trivial algebraic closure, and hence its age does not have dAP. As such, merges with this structure are not defined, but as we shall soon see, this is not terribly relevant.

We shall prove that equivalence relations do not have transitive extensions, except in trivial cases where they are essentially pure sets (universal relation and equality relation). Our strategy will be to assume an extension exists and claim a number of ultimately contradictory properties that it must satisfy.

\begin{proposition}
    Let $\model = (M, R)$ be an equivalence relation with at least two classes, each of which is of equal cardinality of at least two. Then $\model$ has no transitive extensions.
\end{proposition}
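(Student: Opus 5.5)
The plan is to put the extension in canonical form and use the transitivity of $\Aut(\model')$ to transport the equivalence-relation structure at $0$ to every other point. We then compute the ternary relation completely and find that the link at a point outside a given class has a singleton class.

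First, $\model$ is ultrahomogeneous and transitive, so by Lemma \ref{arity growth lemma} we may assume $\model'$ is in canonical form. That is, $\model' = (M \cup \set{0}, R')$ with $R'$ ternary and
$$R(x,y) \Longleftrightarrow R'(x,y,0).$$
Arguing as in Lemma \ref{Conditions for generic Graph extensions}, $R'$ is irreflexive and symmetric under all permutations of its three arguments. Indeed, transpositions fixing $0$ preserve $R'$ because $R$ is symmetric, this property belongs to the type of $0$, and transpositions fixing one point generate $S_3$.

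For $x \in M'$, define the \emph{link} at $x$ as the binary relation $y \sim_x z \iff R'(x,y,z)$ on $M' \setminus \set{x}$, declared reflexive. The link at $0$ is exactly $\model$, so $\sim_0$ is an equivalence relation whose classes all have the common cardinality $\geq 2$. Since $\Aut(\model')$ is transitive, every link $\sim_x$ is isomorphic to $\model$. In particular, each $\sim_x$ is an equivalence relation whose classes all have size at least $2$. This is the key tool.

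Next, we determine $R'$ on triples from $M$, writing $[a]$ for the $R$-class of $a$. Triples inside one class are edges. To see this, take $a,b,b'$ distinct in $[a]$. In the link at $b$ we have $0 \sim_b a$ and $0 \sim_b b'$, since $R(a,b)$ and $R(b,b')$ hold. Transitivity of $\sim_b$ then gives $a \sim_b b'$, i.e. $R'(a,b,b')$.

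Triples with exactly two points in one class are non-edges. To see this, take $a \neq b$ in one class and $c \notin [a]$. In the link at $b$ we have $0 \sim_b a$ but $0 \not\sim_b c$, because $\neg R(b,c)$. Hence $a \not\sim_b c$, so $\neg R'(a,b,c)$.

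Triples from three distinct classes are also non-edges; this is the step I expect to need the most care. Let $a,c,d$ lie in three distinct classes and pick $a' \in [a]\setminus\set{a}$, which exists because classes have size $\geq 2$. There is an automorphism of $\model$ swapping $a$ and $a'$ and fixing everything else. It lies in $\Aut(\model')_0$ and so preserves $R'$, giving $R'(a,c,d) \iff R'(a',c,d)$. If both held, then $a \sim_c d \sim_c a'$, so $a \sim_c a'$, i.e. $R'(a,a',c)$. This contradicts the previous step, so both fail.

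Finally, fix $c \in M$ and any $a \notin [c]$; such an $a$ exists since there are at least two classes. We compute the $\sim_c$-class of $a$ using the symmetry of $R'$. It cannot contain $0$, since $\neg R'(0,a,c)$. It cannot contain any element of $[c]$ or $[a]$, by the second step. It cannot contain any element of a third class, by the third step. So the $\sim_c$-class of $a$ is $\set{a}$. This contradicts $\sim_c \cong \model$, whose classes have size at least $2$. Hence no transitive extension exists. The argument never uses finiteness, so it covers all three cases (finitely or infinitely many classes, finite or infinite class size) simultaneously.
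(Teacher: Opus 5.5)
Your proof is correct and follows the paper's skeleton: canonical form with a fully symmetric ternary $R'$, links $\sim_x$ transported from $\sim_0 = R$ by transitivity, a case analysis of $R'$ on triples from $M$, and a singleton $\sim_c$-class for $c \in M$ that contradicts the structure at $0$. The routes differ in two local places.

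\textbf{The two-in-one-class case.} The paper first proves an auxiliary claim: if $R'$ holds on two triples of a $4$-set, it holds on all four. It then derives this case from that claim. You apply transitivity of $\sim_b$ directly, which is the same idea.

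\textbf{The three-distinct-classes case.} This is the more substantive difference. The paper counts setwise $4$-orbits: the orbit with one hyperedge is said to be ``taken up'' by the two-in-one-class case. You instead use the transposition $(a\;a') \in \Aut(\model) = \Aut(\model')_0$ to get $R'(a,c,d) \iff R'(a',c,d)$. Transitivity of $\sim_c$ then forces the forbidden $R'(a,a',c)$.

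Your argument is self-contained. The paper's count needs further justification, under either reading of ``setwise $4$-orbit'':
\begin{itemize}
\item Setwise orbits of $\Aut(\model')_0$ on $4$-sets through $0$ are not determined by the number of hyperedges. Whether the single hyperedge contains $0$ is invariant under this group.
\item Setwise orbits of $\Aut(\model')$ on $4$-sets need not be in bijection with setwise $3$-orbits of $\model$. For the Rado graph and its two-graph extension, triangles with one edge and with two edges fall into the same orbit.
\end{itemize}

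As a minor point, your first case (triples inside one class are hyperedges) is never used in the final contradiction. That step only needs the non-edges.
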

\begin{proof}
    Suppose towards a contradiction that an extension $\model'$ in canonical form exists.

    \textbf{Claim 1:} $\model' = (M \cup \set{0}, R')$, where $R'$ is a ternary hyperedge relation which satisfies for $a,b \in M$:
    $$R'(a,b,0) \iff R(a,b).$$
    
    \textit{Proof of claim:} This is just Lemma \ref{Conditions for generic Graph extensions}, using the fact that equivalence relations are a type of graph.\hfill$\qed_{claim}$

    \textbf{Claim 2:} If $A$ is a set of size 4 and $R'$ holds on any two triples from $A$, it holds on all triples from $A$.

    \textit{Proof of claim:} By Claim 1, the relation $\sim_0$ on $M$, defined by $a \sim_0 b$ if $R'(a,b,0)$, is an equivalence relation. This much is trivial, as $\sim_0$ is just $R$, but note that this can be expressed as a first-order sentence in the type of $0$, and so must hold of all elements. That is for every $c \in M'$, the relation $a \sim_c b$ on $M' \setminus \{c\}$ defined by $R'(a,b,c)$ is also an equivalence relation. Now, suppose $\set{a,b,c,d}$ is such that $R'$ holds on two triples from it, say $\set{a,b,c}$ and $\set{a,b,d}$. Since $R'$ is symmetric under all permutations we have $R'(c,a,b) \wedge R'(a,d, b)$, and since $\sim_b$ is transitive we obtain $R'(c,d,b)$. A similar logic allows us to obtain $R'$ on $\set{a,c,d}$, which completes the proof.\hfill$\qed_{claim}$

    \textbf{Claim 3:} $R'$ satisfies for all $a,b,c \in M$
    $$R'(a,b,c) \iff R(a,b) \wedge R(b,c) \wedge R(a,c).$$

    \textit{Proof of claim:} The (unordered) type in $\model'$ of the triple $\{a,b,c\}$ must be entirely determined by the (unordered) type of the same triple in $\model$. We shall bifurcate on the latter.

    \begin{itemize}
    \item If $a,b,c$ are all in the same $R$-class, then we have $R'$ on $\{a,b,0\}, \{a,c, 0\}$ and $\{b,c,0\}$, and thus by Claim 2 we must have $R'$ on $\{a,b,c\}$. Note that this case cannot arise when the classes are of size 2.

    \item If $a$ and $b$ share an $R$-class and $c$ lies outside it, then we have $R'(a,b,0)$. To also have $R'(a,b,c)$ would imply two $R'$-relations on $\set{a,b,c,0}$, and therefore by Claim 2, all $R'$ relations on $\set{a,b,c,0}$ would hold, which is not the case, so $R'(a,b,c)$ cannot hold.

    \item If $a,b,c$ all lie in different $R$-classes, note that this case only appears when there are at least three classes, so we may assume this is the case. Recall that the setwise $4$-orbits in $\model'$ on tuples involving 0 are in bijection with the setwise $3$-orbits on $\model$, of which there are exactly three (except when classes are of size 2 in which case there are exactly two).
    
    Setwise $4$-orbits on $\model'$ are determined by the number of $R'$ relations on the quadruple, and since having the relation on two triples implies it holds on all triples, the only possible values for the number of $R'$ relations are 0, 1 and 4. In this case, four $R'$ relations on $(a,b,c,0)$ is impossible as it would imply $R$ relations that are not there, and one $R'$ relation is impossible as well since that setwise orbit is taken up by triples of the form in case 2.
    
    Thus, the only available setwise orbit is that with no $R'$ relations on $(a,b,c, 0)$, the only way this can arise is if $R'(a,b,c)$ fails to hold when $a,b,c$ are all in distinct $R$-classes.
    \end{itemize}

    \textbf{Claim 4:} $\model'$ cannot be a transitive extension of $\model$.

    \textit{Proof of claim:} Fix an $a \neq 0$. Recall we have an equivalence relation $\sim_a$ on $M' \setminus \set{a}$ defined as $b\sim_a c$ whenever $R'(b,c,a)$. If $b,c$ are in the same $R$-class as $a$ then $R'(b,c,a)$ holds by Claim 3, and $R(0, b, a)$ holds by Claim 1. So all elements of the $R$-class of $a$ (except $a$ itself) share a $\sim_a$-class with $0$. On the other hand, if $b$ is not in the same $R$-class as $a$, then by Claim 3 $R(b,c,a)$ cannot hold for any $a$, so all elements of $M$ outside the $R$-class of $a$ are in singleton $\sim_a$-classes. 
    
    This yields a contradiction, as the fact that there are singleton classes in the relation $\sim_a$ is expressible as a first-order sentence in the type of $a$, and yet by hypothesis there are no singleton classes in the relation $\sim_0$, so this sentence does not hold of $0$, meaning $0$ and $a$ have a different type and so $\model'$ cannot be transitive.
    
\end{proof}

\section{C-sets}\label{C sets}

\begin{definition}

A $C$-set is a set together with a ternary relation $C$ satisfying the following axioms.

\begin{itemize}
    \item[(C1)] $C(a; bc) \rightarrow C(a; cb)$
    \item[(C2)] $C(a; bc) \rightarrow \lnot C(b; ac)$
    \item[(C3)] $C(a; bc) \rightarrow C(a; dc) \vee C(d;bc)$
    \item[(C4)] $a \neq b \rightarrow C(a; bb)$
\end{itemize}

It is a \emph{proper} $C$-set if it in addition

\begin{itemize}
    \item[(C5)]  $\forall ab \exists c C(c; ab)$
    \item[(C5$^*$)]  $ a \neq b \rightarrow \exists c (b\neq c \wedge C(a, bc)($
\end{itemize}

And it is dense if in addition

\begin{itemize}
    \item[(C6)]  $C(a;bc) \rightarrow \exists d (C(d;bc) \wedge C(a;bd)) $
\end{itemize}
\end{definition}

A similar but distinct concept is a $D$-set:

\begin{definition}

a $D$-set is a structure together with a quaternary relation $D$ satisfying the following, similar axioms:

    \begin{itemize}
        \item[(D1)] $ \hspace{0.2in} D(wx;yz) \rightarrow \left(D(xw; yz) \wedge D(yz; wx) \right)$ 
        \item[(D2)]  $\hspace{0.2in} D(wx;yz) \rightarrow \neg D(wy;xz)$
        \item[(D3)]  $\hspace{0.2in}  D(wx;yz) \rightarrow \forall v\left(D(vx;yz) \vee D(wx;yv) \right)$
       \item[(D4)] $\hspace{0.2in} \left(w \neq y \wedge x \neq y \right) \rightarrow D(wx;yy).$
    \end{itemize}

    It is a \emph{proper} $D$-set if it contains at least three elements and in addition

    \begin{itemize}
        \item[(D5)] \hspace{0.2in} $\left(|\{w, x, y\}| = 3\right) \rightarrow \exists z \left(z \neq y \wedge D(wx;yz)\right)$
    \end{itemize}

    And it is \emph{dense} if it has at least two elements and

    \begin{itemize}
        \item[(D6)]  $\hspace{0.2in} D(wx;yz) \rightarrow \exists v \left(D(vx;yz) \wedge D(wx;yv) \right)$
    \end{itemize}
\end{definition}

The intuition for both of these concepts comes from imagining the universe of the set as the leaves of a tree. When the tree is rooted, the relation $C(a; bc)$ denotes that the unique path from a leaf node $a$ to the root is disjoint from  the unique path from $b$ to $c$. When the tree is unrooted, the relation $D(ab;cd)$ denotes that the unique path from a leaf node $a$ to $b$ is disjoint from  the unique path from $c$ to $d$. Note that while this intuition is very useful, and any finite $C$-set or $D$-set may be realized in this way, in the infinite case there exist $C$-sets and $D$-sets that do not correspond to any (graph-theoretic) tree.

The axiom sets for $C$-sets and $D$-sets are clearly analogous, as are the intuitions for them. It is known that homogeneous $D$-sets are transitive extensions of homogeneous $C$-sets, a fact which is implicitly shown in \cite[Theorem 22.1]{adeleke1998relations}. In short, given a homogeneous $C$-set on $M$, one can define a homogeneous $D$-relation on $M \cup \set{0}$ by the familiar-looking rule
$$C(x; yz) \Longleftrightarrow D(0x;yz).$$
This single rule fully determines the $D$-relation without any additional requirements. The following identity follows \cite[Theorem 23.5]{adeleke1998relations} and will be useful later, as essentially the flipside of the previous rule.
$$D(ab;cd) \iff (C(a;cd) \wedge C(b,cd)) \vee (C(c; ab) \wedge C(d, ab)).$$

In order to study $C$-relations and $D$-relations, the next concept is as crucial as the concept of an interval is to a linear order. We shall define it for $D$-sets first, as it is somewhat less cumbersome than in $C$-sets.

\begin{definition}
    A partition $\mathcal{D}$ of a $D$-set into at least three sets is called a splitting if for every $\Sigma \in \mathcal{D}$, every $a,b \in \Sigma$ and every $c, d \not \in \Sigma$, $D(ab;cd)$ holds, and for every four elements $a,b,c,d$ from distinct sets in $\mathcal{D}$, $D(ab;cd)$ does not hold.
\end{definition}
\begin{definition}
    A partition $\mathcal{C}$ of a subset of a $C$-set into at least three sets is called a splitting if there is a distinguished set $\Sigma_0 \in \mathcal{C}$ such that for any three elements $a,b,c$:
    \begin{itemize}
        \item If $a \not\in \Sigma$, $b,c  \in \Sigma$ for some $\Sigma \in \mathcal{C}\setminus\set{\Sigma_0}$, then $C(a;bc)$ holds.
        \item If $a,b,c$ come from distinct sets of $\mathcal{C}\setminus\set{\Sigma_0}$, $C(a;bc)$ does not hold.
    \end{itemize}
\end{definition}

The cardinality of a splitting is called its degree. An element of a splitting is called a sector. The distinguished sector of a $C$-set splitting is called an initial sector. 

\begin{definition}
    A $C$-set or $D$-set is called \emph{regular} if all of its splittings have the same degree.
\end{definition}

Intuitively, a splitting corresponds to removing an inner node from the tree and taking the partition of the set of leaves given by the resulting connected components, with the initial sector of a $C$-set being the connected component ``below'' the removed inner node. Thus regularity corresponds to the standard notion of regularity in trees. The following fact formalizes this intuition.

\begin{fact}
    Given two points $a,b$ in a $C$-set, there is a unique splitting $\mathcal{C}$ such that $a$ and $b$ lie in distinct non-initial sectors of $\mathcal{C}$.

    Given three points $a,b,c$ in a $D$-set, there is a unique splitting $\mathcal{D}$ such that all three lie in distinct sectors of $\mathcal{D}$.
\end{fact}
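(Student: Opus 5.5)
The plan is to prove the $D$-set statement first, directly from axioms (D1)--(D4), and then derive the $C$-set statement by passing to the associated $D$-set on $M\cup\set{0}$ given by the rule $C(x;yz)\Longleftrightarrow D(0x;yz)$ of \cite{adeleke1998relations}.

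\textbf{The $D$-set case.} Fix three distinct points $a,b,c$. Any splitting $\mathcal{D}$ separating them is forced, which gives uniqueness. Let $\Sigma_a$ be the sector of $a$.
\begin{itemize}
\item If $x\in\Sigma_a$ with $x\neq a$, then $D(ax;bc)$ holds by the definition of a splitting.
\item Conversely, suppose $D(ax;bc)$ holds. The point $x$ cannot lie in $\Sigma_b$: then $D(xb;ac)$ would hold, and after rearranging with (D1) this contradicts (D2). Symmetrically $x\notin\Sigma_c$. Nor can $x$ lie in a fourth sector, since then $a,x,b,c$ come from four distinct sectors and $D(ax;bc)$ must fail.
\end{itemize}
Hence $\Sigma_a=\set{a}\cup\set{x: D(ax;bc)}$, and likewise for $\Sigma_b$ and $\Sigma_c$. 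For the remaining points $y,z$, with $y\neq z$, the same kind of case check shows that $y,z$ lie in a common sector if and only if $D(yz;ab)$. So every sector of $\mathcal{D}$ is determined by $D$ together with $a,b,c$.

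For existence, I would take exactly these sets as the definition:
\[ \Sigma_a=\set{a}\cup\set{x: D(ax;bc)}, \qquad \Sigma_b=\set{b}\cup\set{x: D(bx;ac)}, \qquad \Sigma_c=\set{c}\cup\set{x: D(cx;ab)}. \]
The leftover points are grouped by the relation $y\approx z \iff y=z \vee D(yz;ab)$. The verification then has three parts.
\begin{enumerate}
\item The sets $\Sigma_a,\Sigma_b,\Sigma_c$ are pairwise disjoint. This follows from (D2) and (D1).
\item The relation $\approx$ is an equivalence relation on the leftover points. Transitivity is the main use of (D3): from $D(yz;ab)$ and $D(zw;ab)$, apply (D3) with a suitable extra variable, then use (D2) together with the leftover hypothesis on $y,w$ to eliminate the unwanted disjunct.
\item The two splitting conditions hold. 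The first says $D(xy;uv)$ for $x,y$ in one sector and $u,v$ outside it. The second says $D$ fails on four points from four distinct sectors. Each is again a case analysis using (D3) to transport a $D$-relation from the reference triple $a,b,c$ to arbitrary representatives.
\end{enumerate}
I expect step 3, and in particular the first splitting condition for two leftover classes, to be the main obstacle. The difficulty is bookkeeping: one has to choose the right instance of (D3) each time. I would try first to prove a single transport lemma, ``if $x,x'$ are in the same sector then $D(xx';uv)$ for all $u,v$ outside it,'' by replacing one of $a,b,c$ at a time through (D3), and then derive everything else from it.

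\textbf{The $C$-set case.} Form the $D$-set on $M\cup\set{0}$ via $D(0x;yz)\Longleftrightarrow C(x;yz)$, together with the inverse identity of \cite[Theorem 23.5]{adeleke1998relations}. Given distinct $a,b\in M$, apply the $D$-set fact to the triple $a,b,0$. This gives a unique $D$-splitting; restrict it to $M$ and take the sector containing $0$ (minus $0$) as the initial sector $\Sigma_0$.

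Using the displayed identity relating $C$ and $D$, one checks that a partition of $M$ with distinguished sector $\Sigma_0$ is a $C$-splitting with $a,b$ in distinct non-initial sectors exactly when adding $0$ to $\Sigma_0$ yields a $D$-splitting with $a,b,0$ in distinct sectors. This correspondence transfers both existence and uniqueness.
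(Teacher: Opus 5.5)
Your overall architecture matches the paper's: settle the $D$-set case, then transfer it to $C$-sets through $C(x;yz)\Leftrightarrow D(0x;yz)$. The real difference is in the $D$-set case. The paper does not argue it at all; it cites Lemma 4.2 of \cite{estrada2025model}. You instead derive it from (D1)--(D4). Your uniqueness argument is complete and correct, and your candidate sectors are the right ones. The existence half, however, is not yet a proof.

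\textbf{Step 2.} The plan to refute the unwanted disjunct of (D3) by (D2) plus leftover-ness cannot work, because that disjunct can genuinely hold. Suppose that, inside a leftover branch at the centre $m$ of $a,b,c$, the point $w$ splits off closest to $m$ and $y,z$ split off further out. Then $D(zy;aw)$ holds, and with $y,w$ exchanged $D(zw;ay)$ holds. What works is to use both instances: $D(zw;ab)\to D(yw;ab)\vee D(zw;ay)$ and $D(zy;ab)\to D(wy;ab)\vee D(zy;aw)$. Their second disjuncts are incompatible by (D2) applied to $D(zw;ya)$, so one of the two instances delivers $D(yw;ab)$. Leftover-ness plays no role.

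\textbf{Step 3.} This is the actual content of existence, and it is only announced. A one-point-at-a-time transport via (D3) will hit the same phenomenon repeatedly. A clean way to finish is a reduction to finite trees. Each condition you must check (disjointness, transitivity of $\approx$, both splitting clauses) is a universal statement about at most seven points, with sector membership given by quantifier-free $D$-formulas in $a,b,c$. Since (D1)--(D4) are universal, every finite subset is a $D$-set, hence the leaf set of a finite tree $T$. In $T$, let $m$ be the median of $a,b,c$. Your $\Sigma_a,\Sigma_b,\Sigma_c$ and $\approx$-classes are exactly the leaf sets of the components of $T-m$. Both clauses are then immediate: paths inside a component miss paths in its complement, and paths between distinct components all pass through $m$. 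Alternatively, cite the lemma as the paper does.

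\textbf{The $C$-set step.} This is the same as the paper's, which likewise just asserts the bijection of splittings. When you check it, note one point. The direction from a $C$-splitting to a $D$-splitting needs $D(0x;yz)$, i.e.\ $C(x;yz)$, for $x\in\Sigma_0$ and $y,z$ in distinct non-initial sectors. That clause is absent from the paper's definition of a $C$-splitting as written. Without it both the correspondence and uniqueness fail. For example, in a dense $C$-set, replace the sector of $a$ by a proper subset $\Sigma'\ni a$ with $C(x;yz)$ for all $x\notin\Sigma'$ and $y,z\in\Sigma'$, and move the rest of that sector into $\Sigma_0$. Both stated clauses survive. So build that clause into the definition, as the tree picture intends.
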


This follows for $D$-sets as a corollary of \cite[Lemma 4.2]{estrada2025model} (note this source uses a slightly different definition for $D$-set splittings. We restrict to what they call node splittings). The result for $C$-sets then follows from that using the relationship between $D$-sets and $C$-sets which we shall discuss soon.

For two points $a,b$ in a $C$-set or three points $a,b,c$ in a $D$-set, we shall call the splitting given by the previous fact their branching point.

The following fact is proven in \cite[Theorem 1]{estrada2025model} for $D$-sets only, albeit with a proof which generalizes readily to $C$-sets:

\begin{fact}
    A proper $C$-set or $D$-set is ultrahomogeneous if and only if it is dense and regular.
\end{fact}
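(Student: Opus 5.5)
The plan is to prove the two directions separately: the forward one follows from homogeneity applied to pairs and triples, and the backward one is a back-and-forth argument driven by the tree picture of finite substructures. Throughout I treat $D$-sets first and transfer to $C$-sets at the end.

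\emph{Ultrahomogeneous implies dense and regular.} In a proper $D$-set, any two $3$-element subsets with distinct elements are isomorphic as substructures, since (D4) forces every $D$-instance on a $3$-set. By ultrahomogeneity there is an automorphism carrying $(a,b,c)$ to $(a',b',c')$. By uniqueness in the Fact on branching points, this automorphism maps the branching point of $a,b,c$ onto that of $a',b',c'$, sector by sector. Every splitting is the branching point of any three points taken from three distinct sectors, so all splittings have the same degree; this gives regularity. The $C$-set case is identical, using pairs $a\neq b$ and the first half of the same Fact.

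For density, suppose $D(wx;yz)$ holds with $w,x,y,z$ distinct. I would use properness (D5) to produce a second $4$-tuple $w,x,y,z'$ together with an extra point $v$ that realizes the configuration required in (D6). Ultrahomogeneity then transports $v$, because all $4$-tuples satisfying $D(wx;yz)$ on distinct points have the same quantifier-free type. The degenerate cases with repeated points are handled by (D4), and (C6) is treated the same way.

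\emph{Dense and regular implies ultrahomogeneous.} The main step is a one-point extension lemma: for a finite partial isomorphism $f\colon A\to B$ and a new point $a$, find $b$ with $\mathrm{qftp}(b/B)=f(\mathrm{qftp}(a/A))$. I would describe $\mathrm{qftp}(a/A)$ combinatorially through the finite tree spanned by $A$. Exactly one of the following holds:
\begin{enumerate}
\item $a$ hangs off an existing branching point $\mathcal{D}$ of three points of $A$, in a sector of $\mathcal{D}$ that contains no point of $A$;
\item $a$ lies in a sector already meeting $A$, and its branching point with the relevant points of $A$ falls strictly on an edge of the finite tree, i.e. strictly between two consecutive branching points of $A$.
\end{enumerate}
The map $f$ transports branching points of $A$ to branching points of $B$ and preserves which sectors meet $A$.

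In case (1), the corresponding splitting $f(\mathcal{D})$ has the same degree as $\mathcal{D}$ by regularity. Since $\mathcal{D}$ has a sector avoiding $A$, $f(\mathcal{D})$ has a sector avoiding $B$ (when the degree is infinite this is automatic), and any point of that sector works. Checking that the chosen $b$ satisfies every $D$-instance amounts to routine case analysis using the splitting definition.

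Case (2) is where I expect the real work. I need to show that density yields a new branching point strictly between two given ones, with a fresh sector available there. I would first try to iterate (D6) on a quadruple that witnesses the edge, and then use (D5) to pick $b$ off that new branching point.

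Once the lemma is established, a standard back-and-forth over a countable enumeration gives ultrahomogeneity; in the uncountable case the lemma still yields homogeneity for finite partial maps in the usual sense. For $C$-sets I would not redo the case analysis. Instead I would pass to the $D$-set on $M\cup\set{0}$ given by $C(x;yz)\Leftrightarrow D(0x;yz)$, check that density and regularity transfer in both directions, and apply the $D$-set argument to partial isomorphisms fixing $0$.
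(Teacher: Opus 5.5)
The paper does not prove this Fact. It cites Theorem~1 of the $D$-set reference and says the argument adapts to $C$-sets, so your outline would be a self-contained substitute, and its overall shape is right. There is, however, a gap in the one-point extension lemma that carries the backward direction: your case split is not exhaustive.

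\textbf{The missing case.} Once $|A|\geq 3$, a new point $a$ can attach to the finite tree spanned by $A$ on a \emph{pendant} edge. That is, it attaches strictly between a branching point $m$ of $A$ and a leaf $c\in A$ that is the only point of $A$ in its sector at $m$. Already for $A=\set{c,d,e}$, a point $a$ with $D(ac;de)$ is of this kind.
\begin{enumerate}
\item This is not your case (1).
\item Your ``i.e.'' excludes it from case (2), since $c$ is not a branching point.
\item Your method for case (2) cannot reach it: applying (D6) to a quadruple of distinct points of $B$ only ever produces points attaching between two branching points of $B$.
\end{enumerate}
This case is exactly where properness enters the backward direction. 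Choose $w,x\in B$ in two sectors of $f(m)$ other than the sector of $f(c)$, and use (D5) to get $z\neq f(c)$ with $D(wx;f(c)z)$. Since $f(c)$ is alone in its sector, $z$ attaches strictly inside the pendant edge, and $b=z$ works.

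Case (2) itself needs no iteration. Take $w,x$ in two sectors of $f(m_1)$ away from $f(m_2)$, and $y,z$ in two sectors of $f(m_2)$ away from $f(m_1)$. A single (D6)-witness $v$ for $D(wx;yz)$ then already attaches inside the edge, and $b=v$.

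\textbf{Smaller repairs.} Your forward density sketch does work with one use of (D5). Given $D(wx;yz)$ on distinct points, take $z'\neq y$ with $D(wz;yz')$; then $v=z$ witnesses (D6) for $(w,x,y,z')$. But the degenerate instances such as $D(wx;yy)$ need (D5), not (D4). In the reduction of $C$-sets you must also check that the $D$-set on $M\cup\set{0}$ is proper, which uses (C5) and (C5$^*$).

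\textbf{The uncountable case.} Your closing remark should be withdrawn. The extension lemma gives only the back-and-forth property, and ``dense and regular implies ultrahomogeneous'' is false without countability. For example:
\begin{itemize}
\item Let $I=\mathbb{Q}+\mathbb{R}$, and let $M$ be the set of finitely supported functions $I\to\set{0,1}$.
\item Let $\delta(a,b)$ be the least point where $a$ and $b$ differ, and set $C(a;bc)$ iff $\delta(a,b)<\delta(b,c)$.
\item This is a proper, dense $C$-set all of whose splittings have degree $3$.
\item Any two $2$-element substructures are isomorphic. Yet no automorphism maps a pair with $\delta(a,b)$ in the $\mathbb{Q}$-part to one with $\delta(a',b')$ in the $\mathbb{R}$-part.
\item The reason is that the chain of branching points below the first pair is countable and below the second is not, and this chain is invariant under automorphisms.
\end{itemize}
So the Fact must be read for countable structures, as the paper tacitly does. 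In that setting your back-and-forth gives the right conclusion.
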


This fact is stated only for proper $C$-sets and $D$-sets, as there are trivial non-proper examples which are ultrahomogeneous: the structures where the respective relation never holds. Such structures are essentially pure sets, and may be generally ignored.

There are three natural expansions of the language of $C$-sets which we will explore the transitive extensions of. These structures have been previously studied by Cameron \cite{cameron1987some}, largely from the perspective of counting the sizes of their orbits.

\subsection{Ordered $C$-sets}

\begin{definition}
    An ordered $C$-set is a structure $(\Theta, C, <)$, where $C$ is a $C$-relation and $<$ is a linear order, which satisfies 
    $$C(x; yz ) \rightarrow \lnot ((y<x<z) \vee ( z < x < y)).$$
\end{definition}

Cameron discusses these structures in \cite[\S 4, 5]{cameron1987some} under the name $\partial PT$. Intuitively, we may think of an ordered $C$-set as a rooted tree which has been embedded into the plane, and the ordering captures the ``left to right'' order of its leaves. This is distinct from the merge of a $C$-set with a generic linear order, which would correspond to an arbitrary ordering of the leaves with no relation to the tree structure. 

Nevertheless, the natural way to transitively extend an ordered $C$-set would be to treat it as it were a merge and extend the $C$-relation and linear ordering separately into a $D$-relation and circular ordering separately. As we shall prove, this intuition turns out to be correct, and yields a particular expansion of $D$-set by a circular order, which Cameron calls $PT$.

\begin{definition}
    A circularly ordered $D$-set is a structure $(\Omega, D, \gamma)$ where $D$ is a $D$-relation and $\gamma$ is a circular ordering which satisfies
    $$D(xy;zw) \rightarrow \lnot ([x,z,y,w] \vee [w,y,z,x]).$$
    where the bracket notation indicates all the circular relations given by that permutation of the variables under the rule ``$\gamma(x,y,z)$ if, upon repeatedly applying the permutation to $x$, it yields $y$ before $z$.
\end{definition}

The intuition here is similar to that of an ordered $C$-set, but now we may picture an embedding of an unrooted tree into the plane, and $\gamma$ is now the natural ``counter-clockwise'' circular ordering on its leaves.

\begin{proposition}
    Let $\model = (\Theta, C, <)$ be a generic ordered $C$-set, then it has a transitive extension which in canonical form is a circularly ordered $D$-set.
\end{proposition}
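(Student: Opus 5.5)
We define the extension on $\Theta' = \Theta \cup \set{0}$ by the two familiar rules separately: the $D$-relation by $D(0x;yz) \iff C(x;yz)$ (with $D$ on quadruples avoiding $0$ given by the identity from \cite[Theorem 23.5]{adeleke1998relations}), and the circular order $\gamma$ by $\gamma(a,b,0) \iff a<b$, exactly as in the example extending $(\mathbb{Q},<)$ to a dense circular order. By \cite[Theorem 22.1]{adeleke1998relations} and that example, $(\Theta',D)$ is a dense regular $D$-set and $(\Theta',\gamma)$ is a dense circular order. The first step is to verify the compatibility axiom
$$D(xy;zw) \rightarrow \lnot([x,z,y,w] \vee [w,y,z,x]).$$
When $0$ is among the variables, use the symmetries (D1) to put $0$ in the first position. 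The condition then reads $C(y;zw) \rightarrow$ ($y$ is not $<$-between $z$ and $w$), which is exactly the ordered $C$-set axiom once the circular configuration $[0,z,y,w]$ is unwound into the linear order. When $0$ is not among the variables, expand $D(xy;zw)$ by the identity into $(C(x;zw)\wedge C(y;zw)) \vee (C(z;xy)\wedge C(w;xy))$. The ordered $C$-set axiom then forbids the elements of each $C$-pair from separating the others, so $x,y$ both lie outside the interval between $z$ and $w$, or vice versa. A short case check on the six relative linear orders shows this rules out $x,y$ and $z,w$ alternating circularly.

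The second step is to show we have a one-point extension. A permutation $\phi$ of $\Theta$ preserves $C$ and $<$ if and only if $\phi\cup\set{0\mapsto 0}$ preserves $D$ and $\gamma$. One direction is immediate from the defining rules. For the other, $D$ and $\gamma$ on tuples avoiding $0$ are definable from their values on tuples containing $0$ (by the identity for $D$, and by the explicit formula for $\gamma$), so preserving $C$ and $<$ gives preservation of everything. This is the same argument as in Proposition \ref{Extension of Rado Graph}. The structure is already in canonical form in the sense of Lemma \ref{arity growth lemma}.

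The third and hardest step is transitivity, which via the Homogeneity Test we reduce to ultrahomogeneity of $(\Theta',D,\gamma)$. We mimic the back-and-forth of Proposition \ref{Extension of Rado Graph}. Given a finite partial isomorphism $A\to B$ and a new point, first move $0$ out of the way. Since every finite circularly ordered $D$-set can be pointed at any chosen vertex to give a finite ordered $C$-set, a dummy point can be added to both sides and used as the base point. This requires knowing that the age of $(\Theta',D,\gamma)$ consists of all finite circularly ordered $D$-sets, and that $\model$ embeds every finite ordered $C$-set, which holds by genericity. With $0$ out of the way, the configuration lives in $\model$, whose quantifier-free types determine those in the extension, and ultrahomogeneity of $\model$ supplies the extension point. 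The expected obstacle is the realization claim: cutting a finite circularly ordered $D$-set at a vertex $h_0$ must produce a genuine ordered $C$-set. We would prove this by defining $C(x;yz)\iff D(h_0x;yz)$ and $x<y\iff\gamma(h_0,x,y)$, then checking the ordered $C$-set axiom from the circular compatibility axiom, which is the first step read backwards.
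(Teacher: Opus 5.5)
Your first two steps are correct and match the paper's proof. The paper rewrites $D(ab;cd)$ via the Adeleke--Neumann identity, notes that each disjunct excludes the alternating circular configurations, and then calls the transitivity ``straightforward''. Your third step, which tries to supply that transitivity, has a genuine gap.

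\textbf{The gap.} After moving $0$ out of the way, you apply ultrahomogeneity of $\model$ to the given map $p\colon A\to B$. That uses the determination of types in the wrong direction. The $\model$-type of a subset of $\Theta$ determines its $\model'$-type, but not conversely, since $C$ and $<$ are recovered from $D$ and $\gamma$ only with the parameter $0$. For example, take $a_1<a_2$ and $b_2<b_1$. Then $a_i\mapsto b_i$ is a partial isomorphism of $(\Theta',D,\gamma)$, because neither relation carries information on two points, yet it reverses $<$. So ultrahomogeneity of $\model$ says nothing about extending it.

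Nor can you pass to isomorphic copies of $A\cup\set{a}$ and $B$ that put a dummy point at $0$. Back-and-forth must extend the given $p$, and transporting the copies back would need automorphisms of $\model'$, which is essentially what you are proving. The normalization that actually helps is the opposite one: $0$ in both domain and range with $p(0)=0$, as in Lemma \ref{Extensions of homogeneous structures}. But that lemma presupposes transitivity. The passage of Proposition \ref{Extension of Rado Graph} you are imitating takes the same shortcut, so citing it does not close the gap.

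\textbf{The fix: an amalgamation argument.} Verify the extension property of $\model'$ directly. Let $X\subseteq Y$ be finite circularly ordered $D$-sets and $f\colon X\to\model'$ an embedding.
\begin{enumerate}
\item If $f(x_0)=0$ for some $x_0\in X$: point $Y$ at $x_0$, extend $f$ on $X\setminus\set{x_0}$ using the extension property of $\model$, and send $x_0\mapsto 0$. This is an embedding into $\model'$, because $D$ and $\gamma$ are recovered from a pointed structure by the same identities at any base point.
\item If $0\notin f(X)$ and $X\neq\emptyset$: first amalgamate $f(X)\cup\set{0}$ with $Y$ over $X$ in the class of finite circularly ordered $D$-sets, keeping the copy $o$ of $0$ outside $Y$. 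Then point the amalgam at $o$ and argue as in the first case.
\item If $X=\emptyset$: you already embed $Y$ by pointing at a vertex.
\end{enumerate}
The strong amalgamation used in the second case follows from strong amalgamation of finite ordered $C$-sets. Point at a vertex of $X$, amalgamate, and unpoint, using the correspondence from your first step (read in both directions) between finite circularly ordered $D$-sets with a distinguished point and finite ordered $C$-sets.

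With the extension property, $\model'$ is ultrahomogeneous, and Remark \ref{homogeneity test} gives transitivity. The step you flagged as the expected obstacle, that pointing yields a genuine ordered $C$-set, is the routine part. The amalgamation is the real content behind the paper's ``straightforward''.
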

\begin{proof}
    Much of this result follows immediately from Lemma \ref{interpretation lemma} by interpreting both a pure $C$-set and a pure linear order from $\model$ and transitively extending both. Thus if a transitive extension of $\model$ exists, in canonical form it is $(\Theta', D, \gamma)$ for a $D$-relation $D$ and circular order $\gamma$. All we need to prove is that the resulting structure is, in fact, a circularly ordered $D$-set which is a transitive extension of $\model$, and not merely some $D$-set with some circular ordering. 

    To prove this, suppose $D(ab;cd)$. This $D$-relation may be rewritten as
    $$(C(a;cd) \wedge C(b,cd)) \vee (C(c; ab) \wedge C(d, ab)).$$
    Each of these $C$-relations imply a large disjunction of possible $<$-types for $\set{a,b,c,d}$. Crucially, $C(a;cd) \wedge C(b,cd)$ exclude any $<$-types where $a$ and $b$ are between $c$ and $d$, and only those $<$-types induce the circular orderings $[a,c,b,d]$ and $[d,b,c,a]$, so $D(ab;cd)$ does indeed exclude those orderings, as desired. For the disjunct $(C(c; ab) \wedge C(d, ab))$ the logic is analogous. We conclude that this defines a circularly ordered $D$-set, and to check that it is a transitive extension of $\model$ is straightforward.
    
\end{proof}

\subsection{Colored $C$-sets}\hfill\\

There are two natural notions of a colored $C$-set, the first is simply a generic coloring on the elements, which is just the merge of a $C$-set with a colored pure set, which is not even transitive and thus will not be examined. The second is somewhat more interesting:

\begin{definition}
    An \emph{internally-colored $C$-set} is a $C$-set $\Theta$ augmented with binary relations $P_1, \dots P_n$ called colors which partition $\Theta^2 \setminus \Delta_\Theta^2$, such that for any splitting $\mathcal{C}$ of $\Theta$, there is a unique color $P_i$ such that $P_i(x,y)$ for all $x,y$ whose branching point is $\mathcal{C}$
\end{definition}

 Intuitively, what this definition seeks to capture is the notion of applying a coloring on the inner nodes of a rooted tree, formally understood as branching points of pairs. Thus, if two pairs of leaves have the same branching point, they ought to have the same color. This expansion was studied by Cameron \cite[\S 5]{cameron1987some} under the name $\partial T(r)$ where $r$ is the number of colors. There is a natural extension of this notion to $D$-sets, which he calls $T(r)$:

\begin{definition}
    An \emph{internally-colored $D$-set} is a $D$-set $\Omega$ augmented with ternary relations $P_1, \dots P_n$ called colors which partition $\Omega^3 \setminus \Delta_\Omega^3$, such that for any splitting $\mathcal{D}$ of $\Omega$, there is a unique color $P_i$ such that $P_i(x,y, z)$ for all $x,y,z$ whose branching point is $\mathcal{D}$.
\end{definition}

Again, intuitively this colors the inner nodes of an unrooted tree, where each inner node is formally identified as a branching point of triples of leaves. 

\begin{proposition}
    Let $\model = (\Theta, D, P_1, \dots, P_n)$ be a generic internally-colored $C$-set, then $\model$ has a transitive extension which in canonical form is an internally-colored $D$-set.
\end{proposition}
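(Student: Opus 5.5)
Following the pattern of the ordered case, I would first construct the candidate extension explicitly and then check that it is an internally-colored $D$-set and a transitive extension. (I read the $D$ in the statement as the $C$-relation of $\model$.) Let $\model' = (\Theta \cup \set{0}, D, P'_1, \dots, P'_n)$. Here $D$ is the $D$-relation given by the Adeleke--Neumann rule $C(x;yz) \Leftrightarrow D(0x;yz)$. Each ternary color is defined by the canonical-form rule
$$P_i(x,y) \Longleftrightarrow P'_i(x,y,0).$$
For triples $x,y,z$ avoiding $0$, I would set $P'_i(x,y,z)$ to be the color of the pair among $\{x,y\},\{x,z\},\{y,z\}$ whose $C$-branching point is lowest, i.e.\ closest to the root. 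Concretely, if $C(x;yz)$ then the relevant pair is $\{x,y\}$ (equivalently $\{x,z\}$, which has the same branching point). If none of $C(x;yz)$, $C(y;xz)$, $C(z;xy)$ holds, all three pairs share a branching point and that common color is used.

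The key step is to show that this coloring depends only on the $D$-branching point, so that $\model'$ really is an internally-colored $D$-set. I would identify $D$-splittings of $\Theta\cup\set{0}$ with $C$-splittings of $\Theta$: the $D$-splitting is obtained by adjoining $0$ to the initial sector. This identification comes from the correspondence between nodes of the rooted and unrooted trees, and I would justify it using the Fact on branching points together with the identity $D(ab;cd) \iff (C(a;cd)\wedge C(b;cd)) \vee (C(c;ab)\wedge C(d;ab))$.

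Under this identification, there are two cases for the branching point of a triple. For $\{x,y,0\}$, the $D$-branching point is the $C$-branching point of $\{x,y\}$, since $0$ lies in the initial sector. For $\{x,y,z\}\subseteq\Theta$, the $D$-branching point is the splitting in which all three lie in distinct sectors. That splitting is exactly the $C$-branching point of the pair that meets lowest: there one element sits in the initial sector and the other two sit in distinct non-initial sectors, or else all three are in distinct non-initial sectors. Hence triples with the same $D$-branching point receive the same color. This verification, and the sector bookkeeping behind it, is the main obstacle I expect. I would handle it by a case split on which of the three $C$-relations holds on the triple.

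It then remains to show that $\model'$ is a transitive one-point extension. A permutation $\phi$ of $\Theta$ preserves $C$ and the $P_i$ if and only if $\phi\cup\set{0\mapsto 0}$ preserves $D$ and the $P'_i$. The forward direction holds because $D$ and the $P'_i$ are defined from $C$ and the $P_i$. The backward direction holds via the canonical-form rules. This shows $\Aut(\model')_0=\Aut(\model)$.

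For transitivity I would use the Homogeneity Test, which requires showing that $\model'$ is ultrahomogeneous. I would argue this exactly as in Proposition \ref{Extension of Rado Graph}. Every finite internally-colored $D$-set embeds into $\model'$ with a chosen point sent to $0$: rooting the unrooted tree at that point gives a finite internally-colored $C$-set, which embeds into $\model$. Adding a dummy point lets us avoid $0$ in back-and-forth steps, and on $\Theta$ the quantifier-free $\model'$-type is determined by the $\model$-type, so ultrahomogeneity of $\model$ transfers. Alternatively, one could invoke Lemma \ref{Extensions of homogeneous structures} once the structure is known to be in canonical form, but transitivity must still be established independently via the embedding argument above.
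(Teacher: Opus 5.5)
\textbf{There is an error in your construction, though it is locally fixable.} Your rule for triples avoiding $0$ picks the wrong branching point. As written, the structure $\model'$ you define is neither an internally-colored $D$-set nor transitive.

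Suppose $C(x;yz)$. Let $u$ be the $C$-branching point of $\{x,y\}$ (equal to that of $\{x,z\}$) and $v$ that of $\{y,z\}$, so $u$ is strictly closer to the root than $v$.
\begin{itemize}
\item In the splitting $u$, the points $y$ and $z$ lie in the \emph{same} non-initial sector, so $u$ does not separate $x,y,z$.
\item In the splitting $v$, $x$ lies in the initial sector and $y,z$ lie in distinct non-initial sectors. This is exactly the sector description you give yourself. After adjoining $0$ to the initial sector, $v$ is therefore the $D$-branching point of $\{x,y,z\}$.
\end{itemize}
So the relevant pair is the one meeting \emph{furthest} from the root, namely $\{y,z\}$, not $\{x,y\}$.

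With your choice, $\{x,y,z\}$ gets the color of $u$, while $\{0,y,z\}$, which has the same $D$-branching point $v$, gets the color of $v$. These differ in the generic structure as soon as $n\geq 2$, so the structure is not internally colored.

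Transitivity fails as well. Consider the property of a point $p$: whenever $D(pa;bc)$, the triples $\{a,b,c\}$ and $\{p,a,b\}$ have the same color.
\begin{itemize}
\item Under your rule this holds at $p=0$. Indeed $D(0a;bc)$ means $C(a;bc)$, and then both triples receive the color of $\{a,b\}$.
\item It fails at any $x\in\Theta$ for which there are $b,c$ with $C(x;bc)$ and $\{x,b\}$, $\{b,c\}$ of different colors. Take $a=0$: then $D(x0;bc)$ holds, but $\{0,b,c\}$ has the color of $\{b,c\}$ while $\{x,0,b\}$ has the color of $\{x,b\}$.
\end{itemize}
Hence $0$ and $x$ have different types.

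\textbf{The repair.} When $C(x;yz)$, give $\{x,y,z\}$ the color of $\{y,z\}$, keeping your rule when no $C$-relation holds on the triple. More cleanly, color each $D$-splitting by the color of the $C$-splitting it comes from by adjoining $0$ to the initial sector. This is exactly the structure the paper's proof implicitly uses.

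With that correction your plan goes through, and it is more complete than the paper's. The paper invokes Remark \ref{interpretation lemma} to see that a canonical-form extension must be a $D$-relation plus ternary colors. It then checks via the splitting bijection that this is internally colored, and simply asserts transitivity. Your rooting-and-back-and-forth argument actually proves ultrahomogeneity, and hence transitivity. In that argument you should note one detail: every splitting of a finite $D$-set is the branching point of some triple through the chosen root point. Consequently the colors of triples avoiding that point are determined by the rooted $C$-structure, and the embedding of the rooted internally-colored $C$-set really does extend to the whole $D$-set.
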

\begin{proof}
    Similar to the previous subsection, using Lemma \ref{interpretation lemma}, the fact that the extension in canonical form must consist of a $D$-relation $D$ and colors $P_i'$ follows immediately from the fact that $(\Theta, C)$ is a $C$-set and that $(\Theta, P_1, \dots, P_n)$ is an edge-colored graph, the latter of which follows since colors are symmetric from their construction. As before, all we really have to check is that this defines an internally-colored $D$-set which extends $\model$, rather than some $D$-set $\Omega$ with some partition of $(\Omega^3\setminus \Delta_{\Omega}^3)$ into $n$ sets.

    To check this, we use the natural bijection between splittings of a pure $C$-set and those of its extension to a $D$-set. Namely, to each splitting $\mathcal{C}$ of the $C$-set with initial sector $\Sigma_0$, we associate the splitting of the $D$-set given by $\mathcal{C}\cup \set{\Sigma_0 \cup \set{0}} \setminus\set{\Sigma_0}$. That is the splitting of the $D$-set obtained by adding 0 to the initial sector. To check that this yields a bijection between the splittings of $\Theta$ and the splittings of its extension is merely to check the definitions. It then follows quickly from this construction that this bijection in fact preserves the colors of splittings, which yields that $\model'$ is indeed an internally-colored $D$-set which transitively extends $\model$.
\end{proof}

Note that we did not invoke any sort of parity for the colored 3-hypergraph $(\Omega, P_1, \dots, P_n)$, but a form of evenness actually follows, since for any set $A$ of four points in a $D$-set, either there is one common branching point for all triples from $A$ (this is the case if no non-trivial $D$-relations hold on $A$, that is when the induced unrooted tree is the one with one inner node and four leaves), or there are two splittings $\mathcal{D}_1, \mathcal{D}_2$ such that one is the branching point of two triples and the other is the branching point of the other two (This is the case where some $D$-relation holds, that is when the induced unrooted tree is the one with two inner nodes and four leaves). In the first case, all four triples necessarily have the same color, and in the second, one pair will have one color and the other will have another (or possibly the same) color. In any case, the result is always that each color forms an even 3-hypergraph.

If we forget the $C$-set structure, this result also provides a new example of a class of graphs with transitive extensions which are not generic. Namely, this establishes transitive extensions for edge-colored graphs which can be realized as internal colorings of $C$-sets (even those whose number of colors is not a power of 2). We finalize this section with a remark on such graphs, which is well-known in the literature as the correspondence between cographs and cotrees.

\begin{definition}
    A size 4 graph $(G, R)$ is called an \emph{N} if there is an enumeration $\set{a_1, a_2, a_3, a_4}$ of its vertices such that its set of edges is $\set{\set{a_1,a_2}, \set{a_2,a_3}, \set{a_3,a_4}}$
    
    An edge-colored graph is called \emph{N-free} if for every set $A$ of four vertices and color $R_i$, $(A, R_i)$ is not an N.
\end{definition}

\begin{proposition}
    Let $\model = (\Theta, C, R_1, \dots, R_n)$ be an internally-colored $C$-set, then $\graph = (\Theta, R_1, \dots, R_n)$ is $N$-free.
\end{proposition}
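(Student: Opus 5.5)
We argue by contradiction on an arbitrary four-element set. Fix four vertices $A=\set{a_1,a_2,a_3,a_4}$ and a color $R_i$, and suppose $(A,R_i)$ is an N with edges $\set{a_1,a_2},\set{a_2,a_3},\set{a_3,a_4}$. The approach is to examine the restriction of the $C$-relation to $A$, which is a finite $C$-set and hence corresponds to a rooted tree with leaves $a_1,\dots,a_4$. We then use the defining property of an internal coloring: the color of a pair depends only on its branching point. Equivalently, in the tree picture, the color of $\set{x,y}$ is the color of the meet $x\wedge y$ of the two leaves.

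The key observation is a $C$-set version of the ultrametric ``isosceles'' property. For any three distinct points $x,y,z$, exactly one of the following holds.
\begin{enumerate}
    \item All three pairs have the same branching point. This is the case where no $C$-relation holds among them.
    \item Exactly one of $C(x;yz)$, $C(y;xz)$, $C(z;xy)$ holds. Say $C(x;yz)$; then $xy$ and $xz$ share a branching point, and it differs from that of $yz$.
\end{enumerate}
This follows from the Fact on uniqueness of branching points together with the splitting definition. If $C(x;yz)$, then $y,z$ lie in a common non-initial sector of the splitting separating $x$ from $y$, so that splitting is also the branching point of $x,z$. In either case, among any three points the two ``outer'' pairs get the same color. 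So in every triple, at least two of the three pairs have equal color, and if exactly two do, it is the pair through the odd point $x$.

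Now apply this to the triples of the N.

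In $\set{a_1,a_2,a_3}$, the pairs $a_1a_2$ and $a_2a_3$ have color $i$, while $a_1a_3$ does not. So the triple cannot be in case (1), and we must have $C(a_2;a_1a_3)$. Thus $a_1,a_3$ branch strictly below the common meet of $a_2$ with them.

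Symmetrically, from $\set{a_2,a_3,a_4}$ we get $C(a_3;a_2a_4)$.

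Next consider $\set{a_1,a_3,a_4}$. Here $a_3a_4$ has color $i$ and $a_1a_3$ does not, so the triple is in case (2), and $a_1a_4$ must share a color with one of them. Because $a_1,a_3$ are ``closer'' than $a_2$ in the tree, we can use the tree ordering of meets to compare $a_1\wedge a_4$ with $a_3\wedge a_4$.

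The remaining step is a short case analysis on the shape of the four-leaf rooted tree. I expect this step to be the only real obstacle. The goal is to show that the two relations $C(a_2;a_1a_3)$ and $C(a_3;a_2a_4)$ are incompatible, using axiom (C3) together with (C2). Concretely:
\begin{itemize}
    \item Apply (C3) to $C(a_2;a_1a_3)$ with the new point $a_4$. This gives $C(a_2;a_4a_3)$ or $C(a_4;a_1a_3)$.
    \item The first disjunct contradicts $C(a_3;a_2a_4)$ via (C1) and (C2).
    \item So $C(a_4;a_1a_3)$ holds. Then $a_4a_1$ and $a_4a_3$ share a branching point, so $a_1a_4$ has color $i$.
\end{itemize}
This contradicts the assumption that $a_1a_4$ is a non-edge of the N. Since $A$ and $R_i$ were arbitrary, $\graph$ is N-free.
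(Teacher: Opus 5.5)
Your proof is correct, and it takes a different route from the paper's.

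The paper works by enumerating tree shapes. It first notes that if three points of $A$ lie in distinct sectors of one splitting, a monochromatic triangle or anti-triangle appears, so one may assume $(A,C)$ is a binary tree. It then lists the two rooted binary trees on four leaves and counts how many pairs each inner node is the branching point of: $4,1,1$ for the balanced tree and $3,2,1$ for the caterpillar. From this it concludes that exactly three $R_i$-edges can only form a triangle or a star.

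You never classify four-leaf trees. Your three-point ``isosceles'' lemma turns the N's color pattern into $C(a_2;a_1a_3)$ and $C(a_3;a_2a_4)$. One application of (C3), with (C1) and (C2) ruling out the disjunct $C(a_2;a_4a_3)$, then forces $C(a_4;a_1a_3)$. Hence $a_1a_4$ shares the color of $a_3a_4$, which is a contradiction.

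What each route buys:
\begin{itemize}
\item Yours is more axiomatic. It uses only the three-point consequence of the internal-coloring hypothesis together with the $C$-axioms, and it shows that (C3) is exactly the axiom that kills the N.
\item The paper's count is more visual and gives a bit more: a complete description of which three-edge color classes can occur on four points.
\end{itemize}

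Two small remarks.
\begin{itemize}
\item The paragraph on $\set{a_1,a_3,a_4}$ and the ``tree ordering of meets'' is never used and can be deleted.
\item Your isosceles lemma has two steps that rely on an unstated fact. Excluding $z$ from the initial sector of the branching point of $x,y$ when $C(x;yz)$ holds, and the analogous step in case (1), both need that a point $z$ in the initial sector satisfies $C(z;xy)$ whenever $x,y$ lie in distinct non-initial sectors. The paper's splitting definition leaves this implicit. It is the same tree picture the paper itself relies on, so this is not a gap relative to the paper.
\end{itemize}
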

\begin{proof}
    Let $A$ be a subset of $\Theta$ of size 4. If there is a triangle or anti-triangle in $(A, R_i)$ then it is not an $N$ and we are done, so suppose there are none. Thus, it cannot be that there is some splitting such that three elements of $A$ are in three different sectors, as all three pairs from those three elements would have the same branching point and thus the same color, yielding a triangle or anti-triangle. That is to say, we may suppose $(A, C)$ is the $C$-set of a binary tree. Up to isomorphism, there are two binary trees with four nodes, pictured below:
    \begin{center}
    \begin{tikzpicture}

\begin{scope}[xshift=0cm]
  \draw (0,1.5) -- (-1.5,3) node[above] {a};
  \draw (0,1) -- (0,1.5);

  \draw (-1, 2.5) -- (-0.5,3) node[above] {b};
  \draw (-0.5,2) -- (0.5,3) node[above] {c};
  \draw (0,1.5) -- (1.5,3) node[above] {d};
\end{scope}

\begin{scope}[xshift=6cm]
   \draw (0,1) -- (0,1.5);

  \draw (0, 1.5) -- (-1.5,3) node[above] {a};
  \draw (-1, 2.5) -- (-0.5,3) node[above] {b};

    \draw (1,2.5) -- (0.5,3) node[above] {c};
  \draw (0, 1.5) -- (1.5,3) node[above] {d};

\end{scope}

\end{tikzpicture}
\end{center}
A minimum requirement for an N is that exactly three edges are colored. In the right hand tree, this is impossible, as the bottom-most node is the branching point of four pairs of points, while each of the upper nodes is the branching point of only one. Thus, no matter how the branching points are colored, there will never be three colored edges. The left hand tree is more promising; from bottom to top, the inner nodes are branching points of three, two and one pair, respectively, so three colored nodes are possible if only the bottom-most one is colored or if both of the upper ones are. The first option yields an anti-triangle on $\set{a,b,c}$ while the first yields a triangle on the same set. Thus, $(A, R_i)$ cannot be an N and we are done.
\end{proof}

Of note, the class of N-free graphs does not have the amalgamation property, and thus there is no generic N-free graph. Nevertheless, N-free graphs which arise as reducts of internally-colored $C$-sets have transitive extensions, which are the reducts of the corresponding internally-colored $D$-sets. More detail on $N$-free graphs and how their lack of an amalgamation property can be salvaged can be found in \cite[\S 5]{cameron1987some} and \cite{covington1989universal}.

\subsection{Leveled $C$-sets}

\begin{definition}
    A \emph{leveled $C$-set} is a $C$-set augmented with a quaternary relation $L(x_1x_2;y_1y_2)$ called a leveling, symmetric under the permutations (12) and (34), which we abbreviate as $x_1x_2 \preceq y_1y_2$. which forms a linear preorder on (unordered) pairs of elements, and respects 

\begin{equation}
    C(a;bc) \leftrightarrow (L(ab; bc) \wedge \lnot L(bc;ab)).
\end{equation}
\end{definition}

We may shorthand the latter statement as $ab \prec bc$. Leveled $C$-sets are a natural way to expand a $C$-set by interpreting a ``height'' linear preorder on the inner nodes of a tree, with each inner node represented by the pairs of points have it as branching point. Cameron \cite[\S 6]{cameron1987some} studied this structure under the name $\partial T(\prec)$ and further study on it was done recently by Almazaydeh, Braunfeld and Macpherson \cite{almazaydeh2024omega} under the name $M_{C, lev}$, where they observed that they can encode permutations, a rather unusual feature among expansions of $C$-sets which precludes them from being monadically NIP, unlike ordered and colored $C$-sets.

One could expect to be able to define an analogous expansion of a $D$-set which extends a leveled $C$-set, yielding something that in Cameron's notation would no doubt be termed $T(\prec)$, but we will show that this is not the case, showing another way in which this expansion has a different flavor to others discussed in this section, and going some ways towards explaining why Cameron could not define $T(\prec)$.

\begin{definition}
    Let $I$ be a linear order.
    
    A sequence $(a_i)_{i \in I}$ in a $C$-set is called $C$-monotonic if $C(a_i; a_ja_k)$ for any $i < j < k$. 
    
    A sequence $(a_i)_{i \in I}$ in a $D$-set is called $D$-monotonic if $D(a_ia_j;a_ka_\ell)$ for all $i < j < k < \ell$.
\end{definition}

\begin{proposition}
    An ultrahomogeneous leveled $C$ does not have a transitive extension.
\end{proposition}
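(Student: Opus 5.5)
We argue by contradiction, following the strategy the paper advertises after Remark \ref{interpretation lemma}. Suppose $\model = (\Theta, C, L)$ is an ultrahomogeneous leveled $C$-set with a transitive extension $\model'$, taken in canonical form. The $C$-relation is $0$-definable in $\model$, giving a pure ultrahomogeneous $C$-set reduct. Remark \ref{interpretation lemma} and the preceding discussion then say that $\model'$ must expand the known transitive extension of that $C$-set, namely the $D$-set on $\Theta \cup \set{0}$ given by $C(x;yz)\iff D(0x;yz)$. Lemma \ref{arity growth lemma} gives the canonical form: a $D$-relation together with a $5$-ary relation $L'$ such that $L'(x_1x_2;y_1y_2;0)\iff L(x_1x_2;y_1y_2)$. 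By transitivity, every point $c\in\Theta\cup\set{0}$ then carries a definable linear preorder $\preceq_c$ on pairs from $\Theta\cup\set{0}\setminus\set{c}$. Each $\preceq_c$ levels the $C$-set rooted at $c$, namely $C_c(x;yz)\iff D(cx;yz)$, in the sense of the defining equivalence of a leveled $C$-set. The aim is to show these preorders cannot be chosen consistently.

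The key step is to compare $\preceq_0$ and $\preceq_a$ along monotonic sequences, and this is where the $D$-monotonic sequences are used. First I would take a $D$-monotonic sequence $(a_i)_{i\in\mathbb{Z}}$ in $\Theta$; such sequences exist by genericity. Rooted at a point $c$ placed beyond one end, it becomes $C$-monotonic (a caterpillar). The inner nodes along it are branching points $\{a_i,a_{i+1},\dots\}$, and in any leveling rooted at that end they are forced to be strictly ordered by height, since $C(a_i;a_ja_k)$ gives $a_ia_j\succ a_ja_k$. Now reroot at a point $d$ beyond the other end. The same inner nodes then receive the reverse order. So any point $c$ determines, via $\preceq_c$, a linear order on the spine nodes, and this order is reversed when $c$ crosses the spine.

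Next I would exploit nodes off the spine. Let $e$ be a point whose branching point with the spine is a node $v$, and let $w$ be a node on a side branch. The relative height of $w$ and the spine nodes is free in $\model$, because levelings can encode permutations, as observed in \cite{almazaydeh2024omega}. However, in $\preceq_c$ this relative height must be determined by the $\Aut(\model')_c$-orbit, which is a fixed orbit transported from $0$. I would build a finite configuration: a tree with root $0$, two points $a,b$ on different sides, and an extra node $w$ whose level relative to the $0$-rooted order is chosen arbitrarily. I would then count orbits. The setwise $5$-orbits in $\model'$ containing $0$ correspond to $4$-orbits in $\model$, and these include distinct leveled configurations of the same underlying tree. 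Transitivity then requires that the type of every $5$-set relative to any of its points is a function of its type relative to $0$. Concretely, I would take a $4$-point set with the binary tree shape whose two internal nodes are at incomparable positions in the tree. In $\model$ their levels can be ordered either way, giving two orbits. Rerooting at a leaf that is rooted-adjacent to one of these nodes changes the comparability structure, so that one of the two levelings is forced by the $C_c$-axiom while the other becomes contradictory or must collapse. This yields a mismatch between the number of orbits seen from $0$ and from that leaf.

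The main obstacle is this counting and consistency step. I need to choose the smallest configuration (probably $4$ or $5$ points, including $0$) on which the rooted trees at two different points have different numbers of admissible levelings. Transitivity of $\model'$ forces these numbers to be equal, since it makes the type of $0$ and of $a$ the same. I would try first the $4$-leaf unrooted tree with two internal nodes, rooted at $0$ versus at another leaf, and enumerate the admissible preorders directly.
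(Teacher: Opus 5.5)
Your proposal does not yet produce a contradiction. The decisive counting step is deferred as ``the main obstacle'', and the material around it does not supply it. The $\mathbb{Z}$-indexed spine, the reversal of spine order under rerooting, and the appeal to levelings encoding permutations are true, but they only restate the tree structure.

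In your concrete step the mechanism is misdescribed. From a cherry leaf $a$, the pair $ab$ is not a pair of $(\Theta\cup\set{0})\setminus\set{a}$ at all, so no leveling ``becomes contradictory or collapses''. What actually happens is that the other four points form a caterpillar rooted at $a$, on which $\preceq_a$ is forced.

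More importantly, you propose counting \emph{setwise} orbits. As unordered $4$-sets, the configurations $ab\prec cd$ and $cd\prec ab$ lie in the \emph{same} orbit. A second setwise orbit would need a tie $ab\sim cd$ between incomparable nodes. Nothing in the hypotheses forces such ties; they are absent whenever $\preceq$ linearly orders the branching points. In that case your count reads $1=1$ and gives nothing.

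The missing idea is to compare \emph{labelled} configurations related by a fixed permutation of coordinates, since that is what transitivity controls. Your set (a leaf at the central node plus two cherries) is the unique $5$-leaf binary unrooted tree. It is therefore a $D$-monotonic sequence $(x_1,\dots,x_5)$, which is exactly the paper's configuration.

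In counting terms the argument runs as follows. Rooted at an endpoint, the other four points are $C$-monotonic and their leveling is forced. Hence there is a single $\Aut(\model')$-orbit of $D$-monotonic $5$-tuples. Now apply $(x_1,\dots,x_5)\mapsto(x_3,x_1,x_2,x_4,x_5)$ and root at the middle point. This identifies the same orbits with the $\Aut(\model)$-orbits of labelled two-cherry $4$-tuples. There are at least two of these: $x_1x_2\prec x_4x_5$ versus $x_4x_5\prec x_1x_2$. That is the contradiction.

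The paper says the same thing without counting. Any two $D$-monotonic tuples $(0,b,c,d,e)$ and $(0,b',c,d',e)$ are matched by a partial isomorphism. This is first-order in the type of $0$, so it transfers to every $a$. Now take $c=0$ and choose points with $L(ab';d'e)$ but $\lnot L(ab;de)$. The resulting partial isomorphism fixes $0$, so it lies in $\Aut(\model)$, yet it breaks $L$.
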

\begin{proof}
    Let $\model = (\Theta, C, L)$ be an ultrahomogeneous leveled $C$-set. We begin by noting that any transitive extension $\model'$ of $\model$ must have a reduct that is isomorphic to a $D$-set. This is essentially a case of Lemma \ref{interpretation lemma}: if we name the constant $0$ then $(\model', 0)$ has a reduct which is (if we take the constant out of the domain) isomorphic to $(\Theta, C)$. That is there is a reduct of $(\model',0)$ which includes the constant $0$ and is bi-interpretable with a $C$-set, but $C$-sets are themselves bi-interpretable with a $D$-set with a named constant, so $\model'$ must interpret a $D$-relation. Following the bi-interpretations this $D$-relation is precisely the canonical $D$-set associated to $(\Theta, C)$. 

    Now, notice that in $(\Theta, C, L)$, any $C$-monotonic sequence of $n$ elements is isomorphic to any other such sequence, as the two will have the same $C$-relation structure which fully determines the $L$-structure on the sequence as each branching point must necessarily be higher than the previous ones.

    In particular, if there are elements $b,b',c,d,d'$ and $e$ such that $(b, c, d, e)$ and $(b', c, d',e)$ are both $C$-monotonic, then there is an isomorphism mapping one to the other. This has the consequence that in $\model'$, if $(0, b, c, d, e)$ and $(0, b', c, d', e)$ are both $D$-monotonic, there must be an isomorphism mapping one to the other, but being the first element of a $D$-monotonic sequence of 5 elements is a definable condition, and so this property of $0$ is contained in the type of $0$.

    But by transitivity the type of $0$ is the same as the type of any other element, so if $a,b,b', c, d,d',e$ are such that $(a,b,c,d,e)$ and $(a, b', c, d', e)$ are both $D$-monotonic, then they have the same type. However this yields a contradiction, as it is possible to pick $c = 0$ and then pick the other elements such that $L(ab'; d'e)$ but not $L(ab, de)$, as illustrated below, where $(a,b,b',0,d,d',e)$ forms a $D$-monotonic sequence which has been laid out in a way that emphasizes the original $C$-set structure, and the dashed lines represent the classes of the preorder $L$.
    \begin{center}
    \begin{tikzpicture}
        \node (c) at (0, 1.3) {$c = 0$};
        \node (a) at (-2.5, 3.5) {$a$};
        \node (b) at (-1.5, 3.5) {$b$};
        \node (bb) at (-0.5, 3.5) {$b'$};
        \node (d) at (0.5, 3.5) {$d$};
        \node (dd) at (1.5, 3.5) {$d'$};
        \node (e) at (2.5, 3.5) {$e$};
        \draw (c) -- (0, 2);
        \draw (a) -- (0, 2);
        \draw (b) -- (-1.5, 2.9);
        \draw (bb) -- (-0.5, 2.3);
        \draw (e) -- (0, 2);
        \draw (dd) -- (1.5, 2.9);
        \draw (d) -- (0.5, 2.3);
        \draw[dashed, gray] (-3, 2.9) -- (3, 2.9);
        \draw[dashed, gray] (-3, 2.3) -- (3, 2.3);
    \end{tikzpicture}
    \end{center}

    In other words, the existence of a transitive extension demands the existence of an automorphism fixing $a, 0, e$ and mapping $b \mapsto b'$, $d \mapsto d'$, but such an automorphism fixes $0$ and therefore must be an automorphism in $\model$, but no such automorphism exists as such a map would break the $L$-structure.
\end{proof}

\bibliographystyle{babplain}
\bibliography{ref}

\end{document}